\newtheorem{thm}{Theorem}[section]
\newtheorem{prop}[thm]{Proposition}
\newtheorem{lemma}[thm]{Lemma}
\newtheorem{exam}[thm]{Example}
\theoremstyle{remark}
\newtheorem{remark}[thm]{Remark}
\newcommand{\id}{{\rm{id}}}
\newcommand{\Ad}{{\rm{Ad}}}
\newcommand{\BC}{\mathbf C}
\newcommand{\BZ}{\mathbf Z}
\newcommand{\la}{\langle}
\newcommand{\ra}{\rangle}
\newcommand{\Ind}{{\rm{Ind}}}
\newtheorem{Def}{Definition}[section]
\title{Equivalence bundles over a finite group and strong Morita equivalence for unital
inclusions of unital $C^*$-algebras}
\author{Kazunori Kodaka}
\address{Department of Mathematical Sciences, Faculty of Science, Ryukyu
\endgraf
University, Nishihara-cho, Okinawa, 903-0213, Japan}
\address{\sl{E-mail address}: \rm{kodaka@math.u-ryukyu.ac.jp}}
\keywords{$C^*$-algebraic bundles, equivalence bundles, inclusions of $C^*$-algebras,
strong Morita equivalence}
\subjclass[2010]{Primary 46L05, Secondary 46L08}
\begin{document}
\maketitle
\begin{abstract}
Let $\mathcal{A}=\{A_t \}_{t\in G}$ and $\mathcal{B}=\{B_t \}_{t\in G}$ be $C^*$-algebraic
bundles over a finite group $G$. Let $C=\oplus_{t\in G}A_t$ and
$D=\oplus_{t\in G}B_t$. Also, let $A=A_e$ and $B=B_e$,
where $e$ is the unit element in $G$.
We suppose that $C$ and $D$ are unital and $A$ and $B$
have the unit elements in $C$ and $D$, respectively. In this paper, we shall show that if there is an
equivalence $\mathcal{A}-\mathcal{B}$-bundle over $G$ with some properties,
then the unital inclusions of unital $C^*$-algebras $A\subset C$ and $B\subset D$
induced by $\mathcal{A}$ and $\mathcal{B}$ are strongly Morita equivalent. 
Also, we suppose that $\mathcal{A}$ and $\mathcal{B}$ are saturated and that
$A' \cap C=\BC 1$.
We shall show that if $A\subset C$ and $B\subset D$
are strongly Morita equivalent,
then there are an automorphism $f$ of $G$ and an equivalence bundle
$\mathcal{A}-\mathcal{B}^f $-bundle over $G$ with the above properties,
where $\mathcal{B}^f$ is the $C^*$-algebraic bundle induced by $\mathcal{B}$ and $f$,
which is defined by $\mathcal{B}^f =\{B_{f(t)}\}_{t\in G}$.
Furthermore, we shall give an application.
\end{abstract}

\section{Intrtoduction}\label{sec:intro} Let $\mathcal{A}=\{A_t \}_{t\in G}$ be a $C^*$-algebraic bundle
over a finite group $G$. Let $C=\oplus_{t\in G}A_t$ and $A_e =A$, where $e$
is the unit element in $G$. We suppose that $C$ is unital and
that $A$ has the unit element in $C$. Then we obtain a unital inclusion of unital $C^*$-algebras,
$A\subset C$. We call it the unital inclusion of unital $C^*$-algebras
\sl
induced by a $C^*$-algebraic bundle $\mathcal{A}=\{A_t \}_{t\in G}$.
\rm
Let $E^A$ be the canonical conditional expectation from $C$ onto $A$ defined by
$$
E^A (x)=x_e
$$
for all $x=\sum_{t\in G}x_i \in C$.

\begin{Def}\label{def:saturated}Let $\mathcal{A}=\{A_t \}_{t\in G}$
be a $C^*$-algebraic bundle over a finite group $G$. We say that
$\mathcal{A}$ is
\sl
saturated
\rm
if $\overline{A_t A_t^*}=A$ for all $t\in G$.
\end{Def}

Since $A$ is unital, in our case we do not need to take the closure in Definition \ref{def:saturated}.
If $\mathcal{A}$ is saturated, by \cite [Corollary 3.2]{KT7:characterization}, $E^A$ is of
index-finite type and its Watatani index, $\Ind_W (E^A )=|G|$, where $|G|$ is the order of $G$.

Let $\mathcal{B}=\{B_t \}_{t\in G}$ be another $C^*$-algebraic bundle over $G$.
Let $D=\oplus_{t\in G}B_t$ and $B=B_e$. Also, we suppose that $\mathcal{B}$ has
the same conditions as $\mathcal{A}$. Let $B\subset D$ be the unital inclusion of unital $C^*$-algebras
induced by $\mathcal{B}$.
\par
Let $\mathcal{X}=\{X_t \}_{t\in G}$ be an $\mathcal{A}-\mathcal{B}$-equivalence bundle
defined by Abadie and Ferraro \cite [Definition 2.2]{AF:bundle}. Moreover, we suppose that
$$
{}_{C} \la X_t \, , \, X_s \ra=A_{ts^{-1}} \, , \quad
\la X_t \, , \, X_s \ra_{D}=B_{t^{-1}s}
$$
for any $t, s\in G$, where ${}_{C}\la X_t \, , \, X_s \ra $ means the linear span of the set
$$
\{{}_{C} \la x \, , \, y \ra\in A_{ts^{-1}} \, | \, x\in X_t , \, y\in X_s \}
$$
and $\la X_t \, , \, X_s \ra_{D} $ means the linear span of the similar set to the above.
The above two properties are stronger than the properties (7R) and (7L) in \cite [Definition 2.1]{AF:bundle}.
\par
In the present paper, we shall show that if there is an $\mathcal{A}-\mathcal{B}$-equivalence bundle
$\mathcal{X}=\{X_t \}_{t\in G}$ such that ${}_{C}\la X_t \, , \, X_s \ra=A_{ts^{-1}}$ and
$\la X_t \, , \, X_s \ra_{D}=B_{t^{-1}s}$ for any $t, s\in G$, then the unital inclusions of
unital $C^*$-algebras $A\subset C$ and $B\subset D$ induced by $\mathcal{A}$ and $\mathcal{B}$
are strongly Morita equivalent.
Also, we suppose that $\mathcal{A}$ and $\mathcal{B}$ are saturated and that $A' \cap C=\BC 1$.
We shall show that
if $A\subset C$ and $B\subset D$ are strongly
Morita equivalent, then there are an automorphism $f$ of $G$ and
an $\mathcal{A}-\mathcal{B}^f $-equivalence bundle $\mathcal{X}=\{X_t \}_{t\in G}$
such that ${}_{C}\la X_t \, , \, X_s \ra=A_{ts^{-1}}$ and
$\la X_t \, , \, X_s \ra_{D}=B_{f(t^{-1}s)}$ for any $t, s\in G$,
where $\mathcal{B}^f $ is the $C^*$-algebraic bundle induced by
$\mathcal{B}=\{B_t \}_{t\in G}$ and $f$, which is defined by $\mathcal{B}^f =\{B_{f(t)}\}_{t\in G}$.
\par
Let $A$ and $B$ be unital $C^*$-algebras and $X$ an $A-B$-equivalence
bimodule. Then we denote its left $A$-action and right $B$-action on $X$ by
$a\cdot x$ and $x\cdot b$ for any $a\in A$, $b\in B$ and $x\in X$, respectively.
Also, we mean by the words ``Hilbert $C^*$-bimodules" Hilbert $C^*$-bimodules in the
sense of Brown, Mingo and Shen \cite {BMS:quasi}.

\section{Equivalence bundles over a finite group}\label{sec:bundle} Let $\mathcal{A}=\{A_t \}_{t\in G}$ and
$\mathcal{B}=\{B_t \}_{t\in G}$ be $C^*$-algebraic bundles over a finite group $G$.
Let $e$ be the unit element in $G$.
Let $C=\oplus_{t\in G}A_t $, $D=\oplus_{t\in G}B_t$ and $A=A_e$, $B=B_e$.
We suppose that $C$ and $D$ are unital and that $A$ and $B$ have the unit elements
in $C$ and $D$, respectively. Let $\mathcal{X}=\{X_t \}_{t\in G}$
be an $\mathcal{A}-\mathcal{B}$-equivalence bundle over $G$ such that 
$$
{}_{C} \la X_t \, , \, X_s \ra=A_{ts^{-1}} , \quad
\la X_t \, , \, X_s \ra_{D}=B_{t^{-1}s}
$$
for any $t, s\in G$.
Let $Y=\oplus_{t\in G}X_t$
and $X=X_e$. Then $Y$ is a $C-D$-equivalence bimodule by
Abadie and Ferraro \cite [Definitions 2.1,  2.2 ]{AF:bundle},
Also, $X$ is an $A-B$-equivalence bimodule since
${}_{C} \la X \, , \, X \ra=A$ and $\la X \, , \, X \ra_{D}=B$.

\begin{prop}\label{prop:easy} Let $\mathcal{A}=\{A_t \}_{t\in G}$ and $\mathcal{B}=\{B_t \}_{t\in G}$ be
$C^*$-algebraic bundles over a finite group $G$.
Let $C=\oplus_{t\in G}A_t$ and $D=\oplus_{t\in G}B_t$ Also, let $A=A_e$ and $B=B_e$,
where $e$ is the unit element in $G$. We suppose that $C$ and $D$ are unital and that
$A$ and $B$ have the unit elements in $C$ and $D$, respectively. Also,
we suppose that there is an $\mathcal{A}-\mathcal{B}$-
equivalence bundle $\mathcal{X}=\{X_t \}_{t\in G}$ over $G$ such that
$$
{}_{C} \la X_t \, , \, X_s \ra=A_{ts^{-1}} \, , \quad \la X_t \, , \, X_s \ra_{D}=B_{t^{-1}s}
$$
for any $t, s\in G$. Then the unital inclusions of unital $C^*$-algebras $A\subset C$ and $B\subset D$ are
strongly Morita equivalent.
\end{prop}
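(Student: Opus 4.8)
The plan is to exhibit the obvious linking data and to check that it matches the definition of strong Morita equivalence for unital inclusions. Set $Y=\oplus_{t\in G}X_t$ and $X=X_e$. As already noted just before the statement of the proposition, $Y$ is a $C-D$-equivalence bimodule by Abadie and Ferraro, and $X=X_e$ is an $A-B$-equivalence bimodule, since the hypotheses at $t=s=e$ give ${}_{C}\la X_e,X_e\ra=A_{ee^{-1}}=A$ and $\la X_e,X_e\ra_D=B_{e^{-1}e}=B$. Hence the whole task reduces to verifying that $X$ is embedded in $Y$ in the prescribed way: that $X$ is a closed subspace of $Y$, that the left $A$-action and right $B$-action on $X$ are the restrictions of the left $C$-action and right $D$-action on $Y$, and that the $A$-valued and $B$-valued inner products on $X$ are the restrictions of the $C$-valued and $D$-valued inner products on $Y$.

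Each of these points will follow directly from the $G$-grading. First, $X=X_e$ is a direct summand of $Y=\oplus_{t\in G}X_t$, hence a closed subspace. Next, because $\mathcal{X}$ is an $\mathcal{A}-\mathcal{B}$-equivalence bundle we have $A_e\cdot X_e\subset X_e$ and $X_e\cdot B_e\subset X_e$, and on $X_e$ these operations are by construction the module actions of the equivalence bimodule $X$; since the $C$-action and $D$-action on $Y$ are defined componentwise through the bundle multiplication, their restriction to $X=X_e$ of elements of $A=A_e\subset C$ and $B=B_e\subset D$ recovers exactly the $A$- and $B$-module structures of $X$. Finally, for $x,y\in X=X_e$ the inner products ${}_{C}\la x,y\ra$ and $\la x,y\ra_D$ computed in $Y$ lie in $A_{ee^{-1}}=A$ and $B_{e^{-1}e}=B$ respectively, and by the compatibility built into the definition of an equivalence bundle they coincide with the $A$-valued and $B$-valued inner products of $X$. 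Collecting these facts, the pair $(Y,X)$ witnesses that $A\subset C$ and $B\subset D$ are strongly Morita equivalent.

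I do not expect a genuine obstacle here, which is why the statement is recorded as an easy proposition. The only points requiring care are bookkeeping ones: matching the definition of strong Morita equivalence of unital inclusions precisely (closedness of $X$ in $Y$, compatibility of the two pairs of module actions, compatibility of the two pairs of inner products), and observing that in this direction only the $t=s=e$ instances of the fullness hypotheses ${}_{C}\la X_t,X_s\ra=A_{ts^{-1}}$ and $\la X_t,X_s\ra_D=B_{t^{-1}s}$ are actually needed, namely to guarantee that $X=X_e$ is full as an $A-B$-bimodule; the remaining content of those hypotheses is used only through the already-invoked fact that $Y$ is a bona fide $C-D$-equivalence bimodule.
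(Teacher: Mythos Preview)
Your setup is the same as the paper's: take $Y=\oplus_{t\in G}X_t$ and $X=X_e$, note that $Y$ is a $C$--$D$-equivalence bimodule and $X$ an $A$--$B$-equivalence bimodule, and then verify the remaining conditions of \cite[Definition~2.1]{KT4:morita}. However, you have misidentified what those remaining conditions are. Besides the compatibility of actions and inner products (which is indeed immediate from the grading, as you say), that definition requires the fullness-type conditions
\[
{}_C\la Y,\,X\ra = C,\qquad \la Y,\,X\ra_D = D,
\]
and this is precisely the content of the paper's proof. To obtain ${}_C\la Y,X\ra=C$ one computes ${}_C\la y,x\ra=\sum_{t\in G}{}_C\la y_t,x\ra$ for $y=\sum_t y_t\in Y$ and $x\in X_e$, observes that ${}_C\la y_t,x\ra\in A_t$, and then invokes the hypothesis ${}_C\la X_t,X_e\ra=A_{t}$ for \emph{every} $t\in G$ to fill out each summand of $C=\oplus_t A_t$; the $D$-side is symmetric.

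This also means your closing remark is not quite right: the hypotheses ${}_C\la X_t,X_s\ra=A_{ts^{-1}}$ and $\la X_t,X_s\ra_D=B_{t^{-1}s}$ are genuinely needed at $s=e$ for \emph{all} $t\in G$ (not only $t=s=e$), and they are needed for the condition ${}_C\la Y,X\ra=C$, $\la Y,X\ra_D=D$, not merely to ensure $Y$ is full. The fix is short---just add the one-line computation above---but as written your argument does not establish the proposition under the definition the paper is using.
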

\begin{proof}Let $Y=\oplus_{t\in G}X_t$ and $X=X_e$. By the above discussions and \cite [Definition 2.1]{KT4:morita},
we have only to show that
$$
{}_C \la Y \, , \, X \ra=C \, , \quad  \la Y \, , \, X \ra_D=D .
$$
Let $x\in X$ and $y=\sum_{t\in G}y_t \in Y$, where $y_t\in X_t$ for any $t\in G$. Then
$$
{}_C \la y, x \ra =\sum_{t\in G} {}_{C} \la y_t \, , x \ra \, ,\quad
\la y, x \ra_D =\sum_{t\in G}\la y_t \, , x \ra_{D} .
$$
We note that ${}_{C} \la y_t \, , x \ra \in A_t$ and $\la y_t \, , x \ra_{D}\in B_t$
for any $t\in G$. Since ${}_{D} \la X_t \, , \, X_s \ra=A_{ts^{-1}}$ and
$\la X_t \, , \, X_s \ra_{D}=B_{t^{-1}s}$ for any $t, s\in G$, by the above computations,
we can see that
$$
{}_C \la Y \, , \, X \ra=C \, , \quad \la Y \, , \, X \ra_D=D .
$$
Therefore, we obtain the conclusion.
\end{proof}

Next, we shall give an example of an equivalence bundle $\mathcal{X}=\{X_t \}_{t\in G}$
over $G$ satisfying the above properties. In order to do this, we prepare a lemma.
Let $\mathcal{A}=\{A_t \}_{t\in G}$ and $\mathcal{B}=\{B_t \}_{t\in G}$ be as above.
Let $\mathcal{X}=\{X_t \}_{t\in G}$ be a complex Banach bundle over $G$ with the
maps defined by
\begin{align*}
& (y, d)\in Y\times D\mapsto y\cdot d\in Y , \quad (y, z)\in Y\times Y \mapsto \la y, z \ra_D \in D \\
& (c, y)\in C\times Y \mapsto c\cdot y \in Y , \quad (y, z)\in Y\times Y\mapsto {}_C \la y , z \ra\in C ,
\end{align*}
where $Y=\oplus_{t\in G}X_t$.

\begin{lemma}\label{lem:equivalence} With the above notation, we suppose that by the above maps,
$Y$ is a $C-D$-equivalence bimodule satisfying that
$$
{}_C \la X_t , X_s \ra =A_{ts^{-1}} , \quad \la X_t , X_s \ra_{D}=B_{t^{-1}s}
$$
for any $t, s\in G$. If $\mathcal{X}$ satisfies Conditions $(1R)$-$(3R)$ and
$(1L)$-$(3L)$ in \cite [Definition 2.1]{AF:bundle}, then $\mathcal{X}$ is
an $\mathcal{A}-\mathcal{B}$-equivalence bundle.
 \end{lemma}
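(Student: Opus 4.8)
The plan is to verify, one by one, the defining axioms of an $\mathcal{A}$-$\mathcal{B}$-equivalence bundle in the sense of \cite[Definition 2.2]{AF:bundle}, using that $Y=\oplus_{t\in G}X_t$ is already a $C$-$D$-equivalence bimodule and that Conditions $(1R)$--$(3R)$ and $(1L)$--$(3L)$ of \cite[Definition 2.1]{AF:bundle} are assumed to hold. First I would recall what remains to be checked beyond those conditions: that each $X_t$ is stable under the bundle operations in the expected graded sense, i.e. $A_t \cdot X_s \subseteq X_{ts}$, $X_s \cdot B_t \subseteq X_{st}$, ${}_{C}\la X_t, X_s\ra \subseteq A_{ts^{-1}}$ and $\la X_t, X_s\ra_{D}\subseteq B_{t^{-1}s}$ (the reverse inclusions to those assumed), and the compatibility identities relating the two inner products, namely ${}_{C}\la x, y\ra\cdot z = x\cdot\la y, z\ra_{D}$ for $x,y,z$ in the fibres. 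The grading inclusions for the inner products are immediate from the hypothesis ${}_{C}\la X_t, X_s\ra = A_{ts^{-1}}$ and $\la X_t, X_s\ra_D = B_{t^{-1}s}$ (equality gives both inclusions at once), so the real content is the grading of the module actions together with the bimodule associativity axiom, which is inherited from $Y$ being a $C$-$D$-equivalence bimodule once one knows the actions respect the grading.

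The key step is therefore to deduce $A_t\cdot X_s\subseteq X_{ts}$ (and symmetrically $X_s\cdot B_t\subseteq X_{st}$) from the inner-product hypotheses. For this I would argue as follows. Since $\mathcal{A}$ is induced by the $C$-$D$-equivalence bimodule structure and ${}_{C}\la X_t, X_s\ra = A_{ts^{-1}}$, every element of $A_t$ is a finite sum of terms ${}_{C}\la x, y\ra$ with $x\in X_r$, $y\in X_{r'}$ and $rr'^{-1}=t$. Acting on $z\in X_s$ and using the bimodule identity ${}_{C}\la x, y\ra\cdot z = x\cdot\la y, z\ra_{D}$, which holds in $Y$, we get ${}_{C}\la x, y\ra\cdot z = x\cdot\la y, z\ra_{D}\in X_r\cdot B_{r'^{-1}s}\subseteq X_{r(r'^{-1}s)}=X_{ts}$, provided Condition $(2R)$ (or its analogue) gives $X_r\cdot B_u\subseteq X_{ru}$. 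Thus $A_t\cdot X_s\subseteq X_{ts}$ follows by linearity; the right-action grading is symmetric, using the other inner product and the left-action condition. With the gradings of the actions in hand, the remaining equivalence-bundle axioms — positivity, the norm condition, and the $C^*$-identities for the fibrewise inner products — are exactly the restrictions to fibres of the corresponding statements for the $C$-$D$-equivalence bimodule $Y$, so they transfer automatically.

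The main obstacle I anticipate is bookkeeping rather than conceptual: making sure that Conditions $(1R)$--$(3R)$, $(1L)$--$(3L)$ of \cite[Definition 2.1]{AF:bundle} as cited are strong enough to supply the module-action grading $X_r\cdot B_u\subseteq X_{ru}$ and $A_u\cdot X_r\subseteq X_{ur}$ that the argument above needs; if those grading conditions are among the $(iR)/(iL)$ conditions (as the notation strongly suggests, since these are precisely the multiplicativity-of-fibres conditions for a Banach bundle with module structure), then the proof is a short deduction. One should also check that the density/spanning hypotheses coming from ``equivalence bimodule'' (fullness of both inner products on $Y$) localize correctly — but this is handled by the equalities ${}_{C}\la X_t, X_s\ra = A_{ts^{-1}}$, $\la X_t, X_s\ra_D = B_{t^{-1}s}$, which are exactly the fibrewise fullness statements. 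Hence the lemma reduces to assembling these pieces, and I would present it in that order: grading of inner products (trivial from hypothesis), grading of actions (via the bimodule identity as above), then invoke the $C$-$D$-equivalence bimodule structure of $Y$ for everything else.
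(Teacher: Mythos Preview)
Your overall strategy---deduce the remaining axioms of \cite[Definitions 2.1, 2.2]{AF:bundle} from the fact that $Y$ is a $C$--$D$-equivalence bimodule---is the same as the paper's. But you misidentify where the work lies. Conditions (1R)--(3R) and (1L)--(3L) are the \emph{assumed} hypotheses, and in the Abadie--Ferraro numbering these already contain the fibrewise grading of the actions and inner products; your ``key step'' of deriving $A_t\cdot X_s\subseteq X_{ts}$ via the compatibility identity is therefore unnecessary (you yourself suspect as much in your final paragraph). What actually remains to verify are Conditions (4R)--(6R), (4L)--(6L)---the Hilbert-module axioms for the inner products, including positivity and completeness of each $X_t$ in the norm $\|\langle\,\cdot\,,\,\cdot\,\rangle_D\|^{1/2}$---together with the fullness Conditions (7R), (7L).

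The paper observes that the algebraic parts of (4R)--(6R), (4L)--(6L) restrict directly from $Y$, and that (7R), (7L) are immediate from the hypotheses ${}_C\langle X_t,X_s\rangle=A_{ts^{-1}}$ and $\langle X_t,X_s\rangle_D=B_{t^{-1}s}$. The one point that does \emph{not} transfer automatically is completeness of each fibre $X_t$ in the inner-product norm, and this is exactly what your proposal glosses over when you say the ``norm condition\ldots transfer[s] automatically.'' The paper handles it by noting that $Y$ is complete both in its given Banach-bundle norm and in the Hilbert-module norm, so these two norms on $Y$ are equivalent; hence each $X_t$, already complete in the first norm, is complete in the second. This short norm-equivalence argument is the only substantive content of the proof, and it is missing from your outline.
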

 \begin{proof} Since $Y$ is a $C-D$-equivalence bimodule, $\mathcal{X}$ has Conditions (4R)-(6R)
 and (4L)-(6L) in \cite [Definiton 2.1]{AF:bundle} except that $X_t$ is complete with the norms
$||\la -, - \ra_D ||^{\frac{1}{2}}=||{}_C \la - , - \ra ||^{\frac{1}{2}}$ for any $t\in G$. But we know that
if $Y$ is complete with two different norms, then the two norms are equivalent. Hence $X_t$ is complete with
the norms $||\la -, - \ra_D ||^{\frac{1}{2}}=||{}_C \la - , - \ra ||^{\frac{1}{2}}$ for any $t\in G$. Furthermore, since
$$
{}_C \la X_t , X_s \ra =A_{ts^{-1}} , \quad \la X_t , X_s \ra_{D}=B_{t^{-1}s}
$$
for any $t, s\in G$, $\mathcal{X}$ has Conditions (7R) and (7L) in \cite [Definiton 2.1]{AF:bundle}.
Therefore, we obtain the conclusion.
 \end{proof}

We give an example of an $\mathcal{A}-\mathcal{B}$-equivalence bundle $\mathcal{X}=\{X_t \}_{t\in G}$
such that
$$
{}_{C} \la X_t \, , \, X_s \ra=A_{ts^{-1}} \, , \quad \la X_t \, , \, X_s \ra_{D}=B_{t^{-1}s}
$$
for any $t, s\in G$. 

\begin{exam}\label{exam:action}
\rm
Let $G$ be a finite group. Let $\alpha$ be an
action of $G$ on a unital $C^*$-algebra $A$. Let $u_t$ be implementing unitary
elements of $\alpha$, that is, $\alpha_t =\Ad(u_t )$ for any $t\in G$. Then
the crossed product of $A$ by $\alpha$, $A\rtimes_{\alpha}G$ is:
$$
A\rtimes_{\alpha}G=\{\sum_{t\in G}a_t u_t \, | \, a_t \in A \, \,\text{for any $t\in G$} \} .
$$
Let $A_t =Au_t$ for any $t\in G$. By routine computations, we see that
$\mathcal{A}_{\alpha}=\{A_t \}_{t\in G}$ is a $C^*$-algebraic bundle over $G$.
We call $\mathcal{A}_{\alpha}$ the $C^*$-algebraic bundle over $G$
\sl
induced
\rm
by an action $\alpha$.
Let $\beta$ be an action of $G$ on a unital $C^*$-algebra $B$ and let
$\mathcal{A}_{\beta}=\{B_t \}_{t\in G}$ induced by $\beta$, where $B_t =Bv_t$
for any $t\in G$ and $v_t$ are implementing unitary elements of $\beta$.
We suppose that $\alpha$ and $\beta$ are strongly Morita equivalent with respect to
an action $\lambda$ of $G$ on an $A-B$-equivalence bimodule $X$. Let $X\rtimes_{\lambda}G$ be the
crossed product of $X$ by $\lambda$ defined in Kajiwara and Watatani \cite [Definition 1.4]{KW2:discrete},
that is, the direct sum of $n$-copies of $X$ as a vector space, where $n$ is the order of $G$.
And its elements are written as formal sums so that
$$
X\rtimes_{\lambda}G =\{\sum_{t\in G}x_t w_t \, | \, x_t \in X \, \, \text{for any $t\in G$} \} ,
$$
where $w_t$ are indeterminates for all $t\in G$. Let $C=A\rtimes_{\alpha}G$,
$D=B\rtimes_{\beta}G$ and $Y=X\rtimes_{\lambda}G$. Then by \cite [Proposition 1.7]{KW2:discrete},
$Y$ is a $C-D$-equivalence bimodule, where we define the left $C$-action and the right $D$-action on $Y$
by
$$
(au_t )\cdot (xw_s )=(a\cdot \lambda_t (x))w_{ts} \quad
(xw_s)\cdot (bv_t )=(x\cdot \beta_s (b))v_{st}
$$
for any $a\in A$, $b\in B$, $x\in X$ and $t, s\in G$ and we define
the left $C$-valued inner product and the right $D$-valued inner product
on $Y$ by extending linearly the following:
$$
{}_C \la xw_t \, , \, yw_s \ra={}_A \la x \, , \, \lambda_{ts^{-1}}(y) \ra u_{ts^{-1}} , \quad
\la xw_t \, , \, yw_s \ra_D =\beta_{t^{-1}}(\la x, y \ra_B )v_{t^{-1}s}
$$
for any $x, y\in X$, $t, s\in G$. Let $X_t =Xw_t$ for any $t\in G$ and $\mathcal{X}_{\lambda}=\{X_t \}_{t\in G}$.
Then $Y=\oplus_{t\in G}X_t$.
Also, $\mathcal{X}_{\lambda}$ has Conditions (1R)-(3R) and (1L)-(3L) in \cite [Definition 2.1]{AF:bundle}.
Furthermore, $X$ is an $A-B$-equivalence bimodule, $\mathcal{X}_{\lambda}$ and satisfies
$$
{}_{C} \la X_t \, , \, X_s \ra=A_{ts^{-1}} \, , \quad \la X_t \, , \, X_s \ra_{D}=B_{t^{-1}s}
$$
for any $t, s\in G$. Therefore, $\mathcal{X}_{\lambda}$ is an $\mathcal{A}_{\alpha}-\mathcal{A}_{\beta}$-
equivalence bundle by Lemma \ref{lem:equivalence}.
\end{exam}

\section{Saturated $C^*$-algebraic bundles over a finite group}\label{sec:algebraic}
Let $\mathcal{A}=\{A_t \}_{t\in G}$ be a saturated $C^*$-algebraic
bundle over a finite group $G$. Let $e$ be the unit element in $G$.
Let $C=\oplus_{t\in G}A_t$ and $A=A_e$. We suppose hat
$C$ is unital and that $A$ has the unit element in $C$. Let $E^A$ be
the canonical conditional expectation from $C$ onto $A$  defined in Section
\ref{sec:intro}, which is of Watatani index-finite type. 
Let $C_1$ be the $C^*$-basic construction of $C$ and $e_A$
the Jones' projection for $E^A$. By \cite [Lemma 3.7]{KT7:characterization},
there is an action $\alpha^{\mathcal{A}}$ of $G$ on
$C_1$ induced by $\mathcal{A}$
defined as follows: Since $\mathcal{A}$ is saturated and
$A$ is unital, there is a finite set $\{x_i^t \}_{i=1}^{n_t}\subset A_t$
such that $\sum_{i=1}^{n_t}x_i^t x_i^{t*} =1$ for any $t\in G$.
Let $e_t =\sum_{i=1}^{n_t}x_i^t e_A x_i^{t*}$ for all $t\in G$.
Then by \cite [Lemmas 3.3, 3.5 and Remark 3.4]{KT7:characterization}, $\{e_t \}_{t\in G}$ are mutually
orthogonal projections in $A' \cap C_1$, which are independent of the choice of $\{x_i^t \}_{i=1}^{n_t}$,
with $\sum_{t\in G}e_t =1$ such that
$C$ and $e_t$ generate the $C^*$-algebra $C_1$ for all $t\in G$.
We define $\alpha^{\mathcal{A}}$ by
$\alpha_t^{\mathcal{A}}(c)=c$ and $\alpha_t^{\mathcal{A}}(e_A )=e_{t^{-1}}$
for any $t\in G$, $c\in C$. Let $\mathcal{A}_1 =\{Y_{\alpha_t^{\mathcal{A}}}\}_{t\in G}$
be the $C^*$-algebraic bundle over $G$
induced by the action $\alpha^{\mathcal{A}}$ of $G$ which is defined in
\cite [Sections 5, 6]{KT7:characterization}, that is, let $Y_{\alpha_t^{\mathcal{A}}}
=e_A C_1 \alpha_t^{\mathcal{A}}(e_A )=e_A C_1 e_{t^{-1}}$ for any $t\in G$.
The product $\bullet$ and the involution $\sharp$ in $\mathcal{A}_1$ are defined as follows:
\begin{align*}
(x, y)\in Y_{\alpha_t^{\mathcal{A}}}\times Y_{\alpha_s^{\mathcal{A}}} & \mapsto
x\bullet y=x\alpha_t^{\mathcal{A}}(y)\in Y_{\alpha_{ts}^{\mathcal{A}}} , \\
x\in Y_{\alpha_t^{\mathcal{A}}} & \mapsto x^{\sharp}=\alpha_{t^{-1}}^{\mathcal{A}}(x^* )\in
Y_{\alpha_{t^{-1}}^{\mathcal{A}}}
\end{align*}

\begin{lemma}\label{lem:iso}With the above notation, $\mathcal{A}$ and $\mathcal{A}_1$ are
isomorphic as $C^*$-algebraic bundles over $G$.
\end{lemma}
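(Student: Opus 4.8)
The plan is to construct an explicit isomorphism $\Phi=\{\Phi_t\}_{t\in G}$ of $C^*$-algebraic bundles over $G$, where
\[
\Phi_t\colon A_t\longrightarrow Y_{\alpha_t^{\mathcal A}}=e_AC_1e_{t^{-1}},\qquad \Phi_t(a)=e_Aae_{t^{-1}}.
\]
Each $\Phi_t$ is manifestly linear, and for $t=e$, choosing $x_1^e=1$ so that $e_e=e_A$, it is the familiar identification $A\ni a\mapsto ae_A$ of $A=A_e$ with $e_AC_1e_A$. The computations below rely on three standard facts about the basic construction together with one covariance identity: (i) $e_Ace_A=E^A(c)e_A$ for $c\in C$; (ii) if $b\in A_tA_{t^{-1}}$ then $b\in A$, so $e_Ab=be_A$; (iii) $\sum_{i=1}^{n_t}x_i^tx_i^{t*}=1$ for all $t$; and (iv) $\alpha_u^{\mathcal A}(e_r)=e_{ru^{-1}}$ for all $r,u\in G$. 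For (iv): since $\alpha_u^{\mathcal A}$ fixes $C$ pointwise and $\alpha_u^{\mathcal A}(e_A)=e_{u^{-1}}$, one gets $\alpha_u^{\mathcal A}(e_r)=\sum_{i,j}(x_i^rx_j^{u^{-1}})e_A(x_i^rx_j^{u^{-1}})^{*}$, and because $\{x_i^rx_j^{u^{-1}}\}_{i,j}\subset A_{ru^{-1}}$ satisfies $\sum_{i,j}(x_i^rx_j^{u^{-1}})(x_i^rx_j^{u^{-1}})^{*}=1$, the independence of $e_{ru^{-1}}$ from such a choice (\cite[Lemmas 3.3, 3.5 and Remark 3.4]{KT7:characterization}) yields the claim.

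The first step is to show that each $\Phi_t$ is an isometric bijection. Inserting $e_{t^{-1}}=\sum_ix_i^{t^{-1}}e_Ax_i^{t^{-1}*}$ and using (i)--(iii) (note $ax_i^{t^{-1}}\in A$), a short computation gives $\Phi_t(a)\Phi_t(a)^{*}=aa^{*}e_A$, whence $\|\Phi_t(a)\|^2=\|aa^{*}e_A\|=\|aa^{*}\|=\|a\|^2$ since $b\mapsto be_A$ is isometric on $A$. For surjectivity, write $C_1=\overline{Ce_AC}$; then $Y_{\alpha_t^{\mathcal A}}=e_AC_1e_{t^{-1}}$ is the closed linear span of the elements $e_Ac_1e_Ac_2e_{t^{-1}}=E^A(c_1)e_Ac_2e_{t^{-1}}$ with $c_1,c_2\in C$, and, expanding $e_{t^{-1}}$ and bookkeeping the grading, each such element equals $\Phi_t\!\bigl(E^A(c_1)(c_2)_t\bigr)$ with $E^A(c_1)(c_2)_t\in A_t$. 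Thus $\Phi_t$ has dense range, and being isometric it has closed range, so it is onto.

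The remaining step is to check compatibility with the operations $\bullet$ and $\sharp$ of $\mathcal A_1$. Using (iv) and $\alpha_t^{\mathcal A}|_C=\id$ we get $\alpha_t^{\mathcal A}(\Phi_s(b))=e_{t^{-1}}be_{(ts)^{-1}}$, hence $\Phi_t(a)\bullet\Phi_s(b)=\Phi_t(a)\,\alpha_t^{\mathcal A}(\Phi_s(b))=e_Aae_{t^{-1}}be_{(ts)^{-1}}$; expanding the two middle projections and tracking the grading so that $E^A$ acts trivially on the surviving partial products, relation (iii) collapses the double sum to $(ab)e_{(ts)^{-1}}=\Phi_{ts}(ab)$. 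For the involution, $\Phi_t(a)^{\sharp}=\alpha_{t^{-1}}^{\mathcal A}\!\bigl(e_{t^{-1}}a^{*}e_A\bigr)=\alpha_{t^{-1}}^{\mathcal A}(e_{t^{-1}})\,a^{*}\,\alpha_{t^{-1}}^{\mathcal A}(e_A)=e_Aa^{*}e_t=\Phi_{t^{-1}}(a^{*})$, using $\alpha_{t^{-1}}^{\mathcal A}(e_{t^{-1}})=e_e=e_A$ and $\alpha_{t^{-1}}^{\mathcal A}(e_A)=e_t$ from (iv). Therefore $\Phi$ is an isomorphism of $C^*$-algebraic bundles. The only genuinely delicate point is the grading bookkeeping in the multiplicativity computation: one must verify that after inserting the resolutions $e_{t^{-1}}=\sum_ix_i^{t^{-1}}e_Ax_i^{t^{-1}*}$, the relevant partial products land in $A=A_e$, so that the conditional expectation can be dropped and (iii) applied; everything else is routine.
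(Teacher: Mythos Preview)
Your proof is correct and follows essentially the same route as the paper: both construct the bundle map $A_t\to e_AC_1e_{t^{-1}}$, verify it is an isometric bijection, and check compatibility with $\bullet$ and $\sharp$ by expanding $e_{t^{-1}}=\sum_i x_i^{t^{-1}}e_Ax_i^{t^{-1}*}$ and exploiting that the relevant products lie in $A$. The only cosmetic difference is that the paper first proves the identity $e_A a e_{t^{-1}}=e_A a$ for $a\in A_t$ (equivalently $a e_{t^{-1}}=e_A a$) and thereafter works with the simpler formula $\pi_t(a)=e_Aa$, which streamlines the multiplicativity and involution checks; your map $\Phi_t(a)=e_Aae_{t^{-1}}$ is literally the same map, and once you observe this identity your ``grading bookkeeping'' collapses to a one-line computation.
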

\begin{proof} Since $C_1 =Ce_A C$, for any $t\in G$
$$
Y_{\alpha_t^{\mathcal{A}}}=e_A Ce_A Ce_{t^{-1}}=e_A A Ce_{t^{-1}}=e_A Ce_{t^{-1}} .
$$
Let $x$ be any element in $C$. Then we can write that $x=\sum_{s\in G}x_s$,
where $x_s \in A_s$. Hence
\begin{align*}
e_A x e_{t^{-1}} & =\sum_{s, i}e_A x_s x_i^{t^{-1}}e_A x_i^{t^{-1}*} =
\sum_{s, i}E^A (x_s x_i^{t^{-1}})e_A x_i^{t^{-1}*} \\
& =\sum_i x_t x_i^{t^{-1}}e_A x_i^{t^{-1}*}=e_A x_t \sum_i x_i^{t^{-1}}x_i^{t^{-1}*}=e_A x_t .
\end{align*}
Thus $Y_{\alpha_t^{\mathcal{A}}}=e_A Ce_{t^{-1}}=e_A  A_t$ for any $t\in G$.
Let $\pi_t$ be the map from $A_t$ to $Y_{\alpha_t^{\mathcal{A}}}$ defined by
$$
\pi_t (x)=e_A x
$$
for any $x\in A_t$ and $t\in G$. By the above discussions $\pi_t$ is a linear map from $A_t$ onto
$Y_{\alpha_t^{\mathcal{A}}}$. Then
$$
||\pi_t (x)||^2 =||e_A xx^* e_A ||=||E^A (xx^* )e_A ||=||E^A (xx^* )||=||xx^* ||=||x||^2 .
$$
Hence $\pi_t$ is injective for any $t\in G$. Thus $A_t \cong e_A C_1 \alpha_t^{\mathcal{A}}(e_A )$
as Banach spaces for any $t\in G$. Also, for any $x\in A_t$, $y\in A_s$, $t, s\in G$,
\begin{align*}
\pi_t (x)\bullet \pi_s (y) & =e_A x\alpha_t^{\mathcal{A}}(e_A y)=e_A xe_{t^{-1}}y
=e_A \sum_i xx_i^{t^{-1}}e_A x_i^{t^{-1}*} y \\
& =e_A \sum_i xx_i^{t^{-1}}x_i^{t^{-1}*}y=e_A xy=\pi_{ts}(xy) ,
\end{align*}
\begin{align*}
\pi_t (x)^{\sharp} & =\alpha_{t^{-1}}^{\mathcal{A}}(\pi_t (x)^* )=\alpha_{t^{-1}}^{\mathcal{A}}((e_A x)^* )
=\alpha_{t^{-1}}^{\mathcal{A}}(x^* e_A )=x^* e_t \\
& =\sum_i x^* x_i^t e_A x_i^{t*}=e_A \sum_i x^* x_i^t x_i^{t*} =e_A x^* =\pi_{t^{-1}}(x^* ) .
\end{align*}
Therefore, $\mathcal{A}=\{A_t \}_{t\in G}$ and $\mathcal{A}_1 =\{Y_{\alpha_t^{\mathcal{A}}}\}_{t\in G}$ are
isomorphic  as $C^*$-algebraic bundles over $G$.
\end{proof}

\section{Strong Morita equivalence for unital inclusions of unital $C^*$-algberas}
\label{sec:inclusion}
Let $\mathcal{A}=\{A_t \}_{t\in G}$ and $\mathcal{B}=\{B_t \}_{t\in G}$ be saturated $C^*$-algebraic
bundles over a finite group $G$. Let $e$ be the unit element in $G$.
Let $C=\oplus_{t\in G}A_t$, $D=\oplus_{t\in G}B_t$ and $A=A_e$, $B=B_e$. We suppose that
$C$ and $D$ are unital and that $A$ and $B$ have the unit elements in
$C$ and $D$, respectively. Let $E^A$ and $E^B$ be
the canonical conditional expectations from $C$ and $D$ onto $A$ and $B$ defined in Section
\ref{sec:intro}, respectively. They are of Watatani index-finite type. Let $A\subset C$ and $B\subset D$ be the 
unital inclusions of unital $C^*$-algebras induced by $\mathcal{A}$ and $\mathcal{B}$, respectively.
We suppose that $A\subset C$ and $B\subset D$ are strongly Morita equivalent
with respect to a $C-D$-equivalence bimodule $Y$ and its closed subspace $X$.
Also, we suppose that $A' \cap C=\BC 1$. Then by \cite [Lemma 10.3]{KT4:morita},
$B' \cap D=\BC 1$ and by \cite [Lemma 4.1]{Kodaka:Picard2} and its proof, there is the unique
conditional expectation $E^X$ from $Y$ onto $X$ with respect to $E^A$ and $E^B$.
Let $C_1$ and $D_1$ be the $C^*$-basic constructions of $C$ and $D$ and $e_A$ and $e_B$
the Jones' projections for $E^A$ and $E^B$, respectively. Then by \cite [Lemma 3.7]{KT7:characterization},
there are actions $\alpha^{\mathcal{A}}$ and $\alpha^{\mathcal{B}}$ of $G$ on
$C_1$ and $D_1$ induced by $\mathcal{A}$ and $\mathcal{B}$, respectively.
Furthermore, let $C_2$ and $D_2$ be the $C^*$-basic constructions of $C_1$ amd $D_1$ for the
dual conditional expectations $E^C$ of $E^A$ and $E^D$ of $E^B$, which are isomorphic to
$C_1 \rtimes_{\alpha^{\mathcal{A}}}G$ and $D_1\rtimes_{\alpha^{\mathcal{B}}}G$, respectively.
We identify $C_2$ and $D_2$ with $C_1 \rtimes_{\alpha^{\mathcal{A}}}G$
and $D_1\rtimes_{\alpha^{\mathcal{B}}}G$, respectively. By \cite [Corollary 6.3]{KT4:morita},
the unital inclusions $C_1 \subset C_2$ and $D_1 \subset D_2$ are strongly Morita equivalent
with respect to a $C_2 -D_2$-equivalence bimodule $Y_2$ and its closed subspace $Y_1$,
where $Y_1$ and $Y_2$ are the $C_1 -D_1$-equivalence bimodule and
the $C_2 -D_2$-equivalence bimodule defined in \cite [Section 6]{KT4:morita}, respectively and
$Y_1$ is regarded as a closed subspace of $Y_2$ in the same way as in \cite [Section 6]{KT4:morita}.
Also, ${C_1} ' \cap C_2 =\BC 1$ by the proof of Watatani \cite [Proposition 2.7.3]{Watatani:index}
since $A' \cap C=\BC 1$. Hence by \cite [Corollary 6.5]{KT5:Hopf},
there is an automorphism $f$ of $G$ such that $\alpha^{\mathcal{A}}$ is strongly Morita
equivalent to $\beta$, where $\beta$ is the action of $G$ on $D_1$
induced by $\alpha^{\mathcal{B}}$ and $f$, which is defined by
$\beta_t (d)=\alpha_{f(t)}^{\mathcal{B}}(d)$ for any $t\in G$ and $d\in D_1$.
Let $\lambda$ be an action of $G$ on a $C_1 -D_1$-equivalence bimodule $Z$ with 
respect to $(C_1 , D_ 1 , \alpha^{\mathcal{A}}, \beta)$.
\par
Let $\mathcal{A}_1 =\{Y_{\alpha_t^{\mathcal{A}}}\}_{t\in G}$ and
$\mathcal{B}_1 =\{Y_{\alpha_t^{\mathcal{B}}}\}_{t\in G}$ be the $C^*$-algebraic
bundles over $G$ induced by the actions $\alpha^{\mathcal{A}}$ and $\alpha^{\mathcal{B}}$,
which are defined in Section \ref{sec:algebraic}. Furthermore, let $\mathcal{B}^f =\{B_{f(t)}\}_{t\in G}$ be
the $C^*$-algebraic bundle over $G$ induced by $\mathcal{B}$ and $f$ and
let $\mathcal{B}_1^f =\{Y_{\beta_t }\}_{t\in G}$ be the $C^*$-algebraic bundle over $G$
induced by the action $\beta$, which is defined in Section \ref{sec:algebraic}.
We shall construct an $\mathcal{A}_1 -\mathcal{B}_1^f $-equivalence bundle
$\mathcal{Z}=\{Z_t \}_{t\in G}$ over $G$. Let $Z_t =e_A \cdot Z \cdot \beta_t (e_B )$ for any
$t\in G$ and let $W=\oplus_{t\in G}Z_t$. Also, by Lemma \ref {lem:iso} and its proof
$\oplus_{t\in G}Y_{\alpha_t^{\mathcal{A}}}\cong C$ and $\oplus_{t\in G}Y_{\beta_t}\cong D$
as $C^*$-algebras. We identify $\oplus_{t\in G}Y_{\alpha_t^{\mathcal{A}}}$ and $\oplus_{t\in G}Y_{\beta_t}$
with $C$ and $D$, respectively.
We define the left $C$-action $\diamond$ and the left $C$-valued inner product
${}_{C} \la -, - \ra$ on $W$
by
\begin{align*}
e_A x \alpha_t^{\mathcal{A}}(e_A )\diamond [e_A \cdot z\cdot \beta_s (e_B )] & \overset{\text{def}} =
e_A x \alpha_t^{\mathcal{A}}(e_A )\cdot \lambda_t (e_A \cdot z\cdot \beta_s (e_B )) \\
& =e_A x\alpha_t^{\mathcal{A}}(e_A )\cdot \lambda_t (z)\cdot \beta_{ts}(e_B ) \\
& =e_A \cdot [x\alpha_t^{\mathcal{A}}(e_A )\cdot \lambda_t (z)]\cdot \beta_{ts}(e_B ) ,
\end{align*}
\begin{align*}
{}_{C} \la e_A \cdot z\cdot \beta_t (e_B) \, ,\, e_A \cdot w\cdot \beta_s (e_B ) \ra 
& \overset{\text{def}}={}_{C_1} \la e_A \cdot z\cdot \beta_t (e_B ) \, , \, \lambda_{ts^{-1}}(e_A \cdot w\cdot \beta_s(e_B ))\ra \\
& =e_A \, {}_{C_1} \la z\cdot \beta_t (e_B ) \, , \, \lambda_{ts^{-1}}(w)\cdot\beta_t (e_B ) \ra
\alpha_{ts^{-1}}^{\mathcal{A}}(e_A ) ,
\end{align*}
where $e_A x \alpha_t^{\mathcal{A}}(e_A )\in e_A C_1 \alpha_t^{\mathcal{A}}(e_A )$, \,
$e_A \cdot z\cdot\beta_s (e_B ), \, e_A \cdot w\cdot\beta_s (e_B )\in Z_s$, \,
$e_A \cdot z\cdot\beta_t (e_B )\in Z_t$.
Also, we define the right $D$-action, which is also denoted by the same symbol $\diamond$ and the
$D$-valued inner product $\la -, - \ra_{D}$ on $W$ by
\begin{align*}
[e_A \cdot z\cdot \beta_t (e_B )]\diamond e_B x\beta_s (e_B ) & \overset{\text{def}} =
e_A \cdot z\cdot\beta_t (e_B )\beta_t (x)\beta_{ts}(e_B ) \\
& =e_A \cdot [z\cdot\beta_t (e_B )\beta_t (x)]\cdot\beta_{ts}(e_B ) ,
\end{align*}
\begin{align*}
\la e_A \cdot z\cdot\beta_t (e_B ) \, , \, e_A \cdot w\cdot \beta_s (e_B ) \ra_{D} & \overset{\text{def}} =
\beta_{t^{-1}}(\la e_A \cdot z\cdot \beta_t (e_B ) \, , \, e_A \cdot w\cdot \beta_s (e_B ) \ra_{D_1}) \\
& =e_B \beta_{t^{-1}}(\la e_A \cdot z \ , \, e_A \cdot w \ra_{D_1})\beta_{t^{-1}s}(e_B ) ,
\end{align*}
where $e_Bx\beta_s (e_B )\in e_B D_1 \beta_s (e_B )$, $e_A \cdot z\cdot \beta_t (e_B )\in Z_t$,
$e_A \cdot w\cdot \beta_s (e_B )\in Z_s$. By the above definitions, $\mathcal{Z}$ has Conditions
(1R)-(3R) and (1L)-(3L) in \cite [Definition 2.1]{AF:bundle}. We show that $\mathcal{Z}$
has Conditions (4R) and (4L) in \cite [Definition 2.1]{AF:bundle} and that $\mathcal{Z}$ is an
$\mathcal{A}_1 -\mathcal{B}_1^f $-bundle in the same way as in Example \ref {exam:action}.

\begin{lemma}\label{lem:condition4} With the above notation, $\mathcal{Z}$ has Conditions $(4R)$ and
$(4L)$ in \cite [Definition 2.1]{AF:bundle}.
\end{lemma}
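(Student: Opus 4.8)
The plan is to verify Conditions (4R) and (4L) of \cite[Definition 2.1]{AF:bundle} directly from the definitions of the operations $\diamond$ and the inner products on $W$, exactly as was done implicitly for $\mathcal{X}_\lambda$ in Example \ref{exam:action}. Recall that Condition (4R) asks that for $z\in Z_t$ and $d\in Y_{\beta_s}$ the right action satisfies $\la z\diamond d,z\diamond d\ra_D\le\|d\|^2\,\la z,z\ra_D$ in $B_{f(e)}=B$ (equivalently, that $\diamond$ is a well-defined contractive right action compatible with the $D$-valued inner product), together with the associativity $(z\diamond d_1)\diamond d_2=z\diamond(d_1 d_2)$; Condition (4L) is the symmetric statement for the left $C$-action $\diamond$ and ${}_C\la-,-\ra$. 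Since $W=\oplus_{t\in G}Z_t$ is, after the identifications $\oplus_t Y_{\alpha_t^{\mathcal{A}}}\cong C$ and $\oplus_t Y_{\beta_t}\cong D$ of Lemma \ref{lem:iso}, nothing but the crossed-product-type equivalence bimodule $Z\rtimes_\lambda G$ attached to the action $\lambda$ with respect to $(C_1,D_1,\alpha^{\mathcal{A}},\beta)$, these conditions are inherited from the fact that $W$ is a $C$--$D$-equivalence bimodule in the sense of \cite[Proposition 1.7]{KW2:discrete}.

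Concretely, first I would unwind the right action: for $z\in Z_t=e_A\cdot Z\cdot\beta_t(e_B)$ and $e_Bx\beta_s(e_B)\in Y_{\beta_s}$, the definition gives $[e_A\cdot z\cdot\beta_t(e_B)]\diamond e_Bx\beta_s(e_B)=e_A\cdot[z\cdot\beta_t(e_B)\beta_t(x)]\cdot\beta_{ts}(e_B)\in Z_{ts}$, which is the $w$-coefficient computation of the crossed product $Z\rtimes_\lambda G$ restricted to the corner cut out by $e_A$ on the left and $\beta_\bullet(e_B)$ on the right; the associativity $(z\diamond d_1)\diamond d_2=z\diamond(d_1 d_2)$ then follows from the $\beta$-twisted cocycle identity for $\beta$ together with $\beta_s$ being an automorphism. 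Second I would check the inner-product compatibility: $\la z\diamond d,w\ra_D=\beta_{t^{-1}}\bigl(\la z\cdot\beta_t(e_B)\beta_t(x),w\ra_{D_1}\bigr)$ and unwind via the $D_1$-valued inner product on $Z$ and the relation $\la z,w\ra_{D_1}$-compatibility with the $C_1$-action, reducing everything to the corresponding identity one level down, which holds because $Z$ is a $C_1$--$D_1$-equivalence bimodule. The contractivity estimate then comes from the $C^*$-identity in $D_1$ and positivity of $\beta_{t^{-1}}$. Condition (4L) is handled symmetrically using the left action $e_Ax\alpha_t^{\mathcal{A}}(e_A)\diamond[e_A\cdot z\cdot\beta_s(e_B)]=e_A\cdot[x\alpha_t^{\mathcal{A}}(e_A)\cdot\lambda_t(z)]\cdot\beta_{ts}(e_B)$ and the ${}_{C_1}$-valued inner product on $Z$, invoking that $\lambda_t$ is an isometric $C_1$-$D_1$-bimodule map in the appropriate twisted sense.

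The one genuine point that needs care—and what I expect to be the main obstacle—is bookkeeping the interplay between the three projections $e_A$, $\alpha_t^{\mathcal{A}}(e_A)=e_{t^{-1}}$ and $\beta_t(e_B)$ when cutting down $Z$: one must confirm that $\lambda_t$ maps $Z_s=e_A\cdot Z\cdot\beta_s(e_B)$ into $e_A\cdot Z\cdot\beta_{ts}(e_B)$ so that the displayed formulas land in the claimed fibers $Z_{ts}$, and that the inner products take values in $Y_{\alpha_{ts^{-1}}^{\mathcal{A}}}$ and $Y_{\beta_{t^{-1}s}}$ respectively. This is where the defining relation $\beta_t=\alpha_{f(t)}^{\mathcal{B}}$ and the covariance $\lambda_t(\,e_A\cdot z\cdot\beta_s(e_B)\,)=e_A\cdot\lambda_t(z)\cdot\beta_{ts}(e_B)$ (already used in the displays preceding the lemma) are essential. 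Once the fiber-placement is confirmed, the algebraic verification of (4R) and (4L) is the same routine computation as in \cite[Proposition 1.7]{KW2:discrete} and Example \ref{exam:action}, so I would present the fiber-tracking carefully and then assert the remaining identities follow exactly as there.
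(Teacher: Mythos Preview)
Your overall methodology---reduce everything to the $C_1$--$D_1$-equivalence bimodule structure of $Z$ and unwind the twisted definitions---is exactly what the paper does. However, you have misidentified what Conditions (4R) and (4L) actually require, and this misidentification leads you to propose checking the wrong identities.

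In the Abadie--Ferraro framework as used here, Conditions (4R) and (4L) are the \emph{inner-product axioms}: for (4R) one must verify
\[
\la z,\, w\diamond d\ra_{D}=\la z,\, w\ra_{D}\bullet d
\quad\text{and}\quad
\la z,\, w\ra_{D}^{\sharp}=\la w,\, z\ra_{D},
\]
and symmetrically for (4L) with ${}_{C}\la -,-\ra$ and the left $\diamond$-action. The paper's proof is a direct computation of precisely these four identities. Your proposal instead focuses on the contractivity estimate $\la z\diamond d,\, z\diamond d\ra_{D}\le\|d\|^{2}\la z,\, z\ra_{D}$ and the associativity $(z\diamond d_{1})\diamond d_{2}=z\diamond(d_{1}\bullet d_{2})$. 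Contractivity and completeness are Conditions (5R)/(6R), which the paper handles separately after Lemma~\ref{lem:condition4} via \cite[Lemma~1.3]{KW2:discrete}; the associativity $(c\diamond z)\diamond d=c\diamond(z\diamond d)$ is one of the three identities proved later in Lemma~\ref{lem:property}, not here. You also never mention the conjugate-symmetry identity $\la z,\, w\ra_{D}^{\sharp}=\la w,\, z\ra_{D}$, which is half of the actual content of (4R) and requires tracking the involution $\sharp$ (i.e.\ applying $\beta_{s^{-1}t}$ to the adjoint) through the twisted definitions.

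Finally, a smaller point: $W$ is not literally $Z\rtimes_{\lambda}G$; it is the corner $e_{A}\cdot(\,\cdot\,)\cdot\beta_{\bullet}(e_{B})$ of that crossed-product construction, so you cannot quite invoke \cite[Proposition~1.7]{KW2:discrete} as a black box---the paper checks the identities by hand for exactly this reason. Your fiber-tracking paragraph is on target, but once the fibers are placed correctly the remaining work is the four inner-product identities above, not the contractivity or associativity you outlined.
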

\begin{proof}
Let $e_A \cdot z\cdot \beta_t (e_B )\in Z_t$, $e_A \cdot w\cdot \beta_s (e_B )\in Z_s$ and
$e_B x\beta_r (e_B )\in e_B D_1 \beta_r (e_B )$, where $t, s, r\in G$.
Then
\begin{align*}
& \la e_A \cdot z\cdot \beta_t (e_B ) \, , \, [e_A \cdot w\cdot\beta_s (e_B )]\diamond e_B x\beta_r (e_B ) \ra_D \\
& =\la e_A \cdot z\cdot \beta_t (e_B ) \, , \, e_A \cdot w \cdot \beta_s (e_B )\beta_s (x)\beta_{sr}(e_B ) \ra_{D} \\
& =\beta_{t^{-1}}(\la e_A \cdot z\cdot \beta_t (e_B ) \, , \, 
e_A \cdot w\cdot \beta_s (e_B )\beta_s (x)\beta_{sr}(e_B ) \ra_{D_1}) \\
& =\beta_{t^{-1}}(\la e_A \cdot z\cdot\beta_t (e_B ) \, , \, e_A \cdot w\cdot\beta_s (e_B ) \ra_{D_1}\,
\beta_s (e_B x \beta_r (e_B ))) \\
& =\beta_{t^{-1}}(\la e_A \cdot z\cdot \beta_t (e_B ) \, , \, e_A \cdot w\cdot \beta_s (e_B )\ra_{D_1})
\beta_{t^{-1}s}(e_B x \beta_r (e_B )) \\
& =e_B \beta_{t^{-1}}(\la e_A \cdot z \, , \, e_A \cdot w \ra_{D_1})\, \beta_{t^{-1}s}(e_B x\beta_r (e_B )) \\
& =e_B \beta_{t^{-1}}(\la e_A \cdot z \, , \, e_A \cdot w \ra_{D_1})\, \beta_{t^{-1}s}(e_B )\bullet e_B x\beta_r (e_B ) \\
& =\la e_A \cdot z\cdot \beta_t (e_B ) \, , \, e_A \cdot w\cdot \beta_s (e_B ) \ra_{D}\bullet e_B x\beta_r (e_B ) .
\end{align*}
Also,
\begin{align*}
& \la e_A \cdot z\cdot \beta_t (e_B ) \, , \, e_A \cdot w\cdot \beta_s (e_B ) \ra_{D}^{\sharp} \\
& =\beta_{t^{-1}}(\la e_A \cdot z\cdot \beta_t (e_B ) \, , \, e_A \cdot w\cdot \beta_s (e_B ) \ra_{D_1})^{\sharp} \\
& =(e_B \beta_{t^{-1}}(\la e_A \cdot z \, , \, e_A \cdot w \ra_{D_1})\beta_{t^{-1}s}(e_B ) )^{\sharp} \\
& =\beta_{s^{-1}t}(\beta_{t^{-1}s}(e_B )\beta_{t^{-1}}(\la e_A \cdot w \, , \, e_A \cdot z \ra_{D_1})e_B ) \\
& =e_B \beta_{s^{-1}}(\la e_A \cdot w \, , \, e_A \cdot z \ra_{D_1})\beta_{s^{-1}t}(e_B ) \\
& =\la e_A \cdot w\cdot \beta_s (e_B ) \, , \, e_A \cdot z\cdot \beta_t (e_B ) \ra_D .
\end{align*}
Hence $\mathcal{Z}$ has Condition (4R) in \cite [Definition 2.1]{AF:bundle}.
Next, let $e_A \cdot z\cdot \beta_t (e_B )\in Z_t$, $e_A \cdot w\cdot \beta_s (e_B )\in Z_s$ and
$e_A x\alpha_r^{\mathcal{A}}(e_A )\in e_A C_1 \alpha_r^{\mathcal{A}}(e_A )$, where $t, s, r\in G$. Then
\begin{align*}
& {}_C \la e_A x\alpha_r^{\mathcal{A}}(e_A )\diamond [e_A \cdot z\cdot \beta_t (e_B )] \, , \,
e_A \cdot w\cdot \beta_s (e_B ) \ra \\
& ={}_{C} \la e_A x\alpha_r^{\mathcal{A}}(e_A )\cdot \lambda_r (z)\cdot \beta_{rt}(e_B ) \, , \,
e_A \cdot w\cdot \beta_s (e_B ) \ra \\
& ={}_{C_1} \la e_A x \alpha_r^{\mathcal{A}}(e_A )\cdot \lambda_r (z)\cdot \beta_{rt}(e_B ) \, , \, 
\lambda_{rts^{-1}}(e_A \cdot w\cdot\beta_s (e_B ))\ra \\
& =e_A x\alpha_r^{\mathcal{A}}(e_A )\,{}_{C_1} \la \lambda_r (e_A \cdot z\cdot \beta_t (e_B )) \, , \,
\lambda_r (\lambda_{ts^{-1}}(e_A \cdot w\cdot \beta_s (e_B )))\ra \\
& =e_A x\alpha_r^{\mathcal{A}}(e_A )\alpha_r^{\mathcal{A}}({}_{C_1} \la e_A \cdot z\cdot \beta_t (e_B ) \, , \,
\lambda_{ts^{-1}}(e_A \cdot w\cdot \beta_s (e_B ))\ra ) \\
& =e_A x\alpha_r^{\mathcal{A}}(e_A )\bullet {}_{C_1} \la e_A \cdot z\cdot\beta_t (e_B ) \, , \,
\lambda_{ts^{-1}}(e_A \cdot w\cdot \beta_s (e_B ))\ra \\
& =e_A x\alpha_r^{\mathcal{A}}(e_A )\bullet {}_C \la e_A \cdot z\cdot \beta_t (e_B ) \, , \,
e_A \cdot w\cdot \beta_s (e_B ) \ra .
\end{align*}
Also,
\begin{align*}
& {}_C \la e_A \cdot z\cdot\beta_t (e_B ) \, , \, e_A \cdot w\cdot\beta_s (e_B ) \ra^{\sharp} \\
& =(e_A \, {}_{C_1} \la z\cdot \beta_t (e_B ) \, , \, \lambda_{ts^{-1}}(w)\cdot \beta_t (e_B ) \ra
\alpha_{ts^{-1}}^{\mathcal{A}}(e_A ))^{\sharp} \\
& =\alpha_{st^{-1}}^{\mathcal{A}}(\alpha_{ts^{-1}}^{\mathcal{A}}(e_A ){}_{C_1} \la \lambda_{ts^{-1}}(w)\cdot
\beta_t (e_B ) \, ,\, z\cdot\beta_t (e_B ) \ra e_A ) \\
& =e_A \, {}_{C_1} \la w\cdot \beta_s (e_B ) \, , \, \lambda_{st^{-1}}(z)\cdot \beta_s (e_B ) \ra
\alpha_{st^{-1}}^{\mathcal{A}}(e_A ) \\
& ={}_C \la e_A \cdot w\cdot\beta_s (e_B ) \, , \, e_A \cdot z\cdot \beta_t (e_B ) \ra .
\end{align*}
Hence $\mathcal{Z}$ has Conditoin (4L) in \cite [Definition 2.1]{AF:bundle}.
\end{proof}

By Lemma \ref {lem:condition4}, $W$ is a $C-D$-bimodule having
Properties (1)-(6) in \cite [Lemma 1.3]{KW2:discrete}. In order to prove that
$\mathcal{Z}$ has Conditions (5R), (6R) and (5L), (6L) in \cite [Definition 2.1]{AF:bundle}
using \cite [Lemma 1.3]{KW2:discrete}, we show that $W$ has Properties (7)-(10) in
\cite[Lemma 1.3]{KW2:discrete}.

\begin{lemma}\label{lem:property} With the above notation, $W$ has the following:
\vskip 0.1cm
\noindent
$(1)$ $(e_A x\alpha_t^{\mathcal{A}}(e_A )\diamond [e_A \cdot z\cdot \beta_s (e_B )])\diamond e_B y\beta_r (e_B )$
\newline
\quad \quad \quad \quad\quad\quad  
$=e_A x\alpha_t^{\mathcal{A}}(e_A )\diamond ([e_A \cdot z\cdot \beta_s (e_B )]\diamond e_B y\beta_r (e_B ))$,
\vskip 0.1cm
\noindent
$(2)$ $\la e_A x\alpha_t^{\mathcal{A}}(e_A )\diamond [e_A \cdot z\cdot\beta_s (e_B )] \, , \,
e_A \cdot w \cdot \beta_r (e_B) \ra_{D}$
\newline
\quad \quad \quad \quad\quad\quad  
$=\la e_A \cdot z\cdot\beta_s (e_B ) \, , \,
(e_A x\alpha_t^{\mathcal{A}}(e_A ))^{\sharp}\diamond [e_A \cdot w \cdot \beta_r (e_B)] \ra_{D}$,
\vskip 0.1cm
\noindent
$(3)$ ${}_C \la e_A \cdot z\cdot \beta_s (e_B ) \, ,
[e_A \cdot w\cdot \beta_r (e_B )]\diamond e_B y\beta_t (e_B ) \ra$
\newline
\quad \quad \quad \quad\quad\quad  
$={}_C \la [e_A \cdot z\cdot \beta_s (e_B )]\diamond (e_B y\beta_t (e_B ))^{\sharp} \, ,
e_A \cdot w\cdot \beta_r (e_B ) \ra$,
\end{lemma}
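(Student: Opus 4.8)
The plan is to prove all three identities by direct computation, in the style of the proof of Lemma~\ref{lem:condition4}: expand each side using the definitions of the actions $\diamond$, the inner products ${}_C\langle-,-\rangle$ and $\langle-,-\rangle_D$, the product $\bullet$ and the involution $\sharp$ of $\mathcal{A}_1$ and $\mathcal{B}_1^f$, and then simplify. The two ingredients that do all the work are (i) the hypothesis that $\lambda$ is an action of $G$ on the $C_1$-$D_1$-equivalence bimodule $Z$ with respect to $(C_1,D_1,\alpha^{\mathcal{A}},\beta)$, which gives $\lambda_t(c\cdot z\cdot d)=\alpha_t^{\mathcal{A}}(c)\cdot\lambda_t(z)\cdot\beta_t(d)$, ${}_{C_1}\langle\lambda_t(z),\lambda_t(w)\rangle=\alpha_t^{\mathcal{A}}({}_{C_1}\langle z,w\rangle)$, $\langle\lambda_t(z),\lambda_t(w)\rangle_{D_1}=\beta_t(\langle z,w\rangle_{D_1})$, $\lambda_e=\id$ and $\lambda_t\lambda_s=\lambda_{ts}$; and (ii) the elementary relations for the Jones projections already used above, namely $\alpha_t^{\mathcal{A}}(e_A)=e_{t^{-1}}$, that $e_A$ and $e_B$ are projections, $e_A x e_A=E^A(x)e_A$, and the equivalence-bimodule adjunctions ${}_{C_1}\langle c\cdot z,w\rangle={}_{C_1}\langle z,c^*\cdot w\rangle$ and $\langle z\cdot d,w\rangle_{D_1}=\langle z,w\cdot d^*\rangle_{D_1}$ for $Z$.

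For $(1)$ one expands the left-hand side to $e_A\cdot[x\alpha_t^{\mathcal{A}}(e_A)\cdot\lambda_t(z)]\cdot\beta_{ts}(e_B)$ and then applies $\diamond\,e_By\beta_r(e_B)$, obtaining $e_A\cdot[x\alpha_t^{\mathcal{A}}(e_A)\cdot\lambda_t(z)\cdot\beta_{ts}(e_B)\beta_{ts}(y)]\cdot\beta_{tsr}(e_B)$; on the right-hand side one first forms $e_A\cdot[z\cdot\beta_s(e_B)\beta_s(y)]\cdot\beta_{sr}(e_B)$ and then applies $e_Ax\alpha_t^{\mathcal{A}}(e_A)\diamond(-)$, and the fact that $\lambda_t$ intertwines the right $D_1$-action via $\beta_t$ collapses this to the same expression. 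Identities $(2)$ and $(3)$ are handled in the same manner: expand the two inner products by their definitions, pull $\alpha^{\mathcal{A}}$ (resp.\ $\beta$) outside ${}_{C_1}\langle-,-\rangle$ (resp.\ $\langle-,-\rangle_{D_1}$) using $\lambda$-equivariance, rewrite the $\sharp$'s of the coefficients $e_Ax\alpha_t^{\mathcal{A}}(e_A)$ and $e_By\beta_t(e_B)$ via the involutions of $\mathcal{A}_1$ and $\mathcal{B}_1^f$ recalled above, and use the adjunction relations for $Z$ together with $e_A^2=e_A$, $e_B^2=e_B$ to recognize the right-hand side.

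The argument is entirely mechanical; the only real point of care—and hence the main obstacle—is the bookkeeping of the group-element subscripts attached to the various occurrences of $\alpha^{\mathcal{A}}$, $\beta$ and the projections $e_A$, $e_B$, so that at each step the intermediate expressions genuinely lie in the intended fibres $e_AC_1\alpha_t^{\mathcal{A}}(e_A)$, $Z_t$, $e_BD_1\beta_t(e_B)$. This is precisely the bookkeeping already carried out in Lemma~\ref{lem:condition4}, so no new structural input is needed.
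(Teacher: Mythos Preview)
Your plan is correct and is exactly what the paper does: each of (1)--(3) is verified by a direct computation that unwinds the definitions of $\diamond$, ${}_C\langle-,-\rangle$, $\langle-,-\rangle_D$, $\bullet$ and $\sharp$, uses $\lambda_t(c\cdot z\cdot d)=\alpha_t^{\mathcal{A}}(c)\cdot\lambda_t(z)\cdot\beta_t(d)$ and the corresponding equivariance of the inner products, and then moves scalars across the $C_1$-$D_1$-inner products on $Z$. One small slip: the adjunctions you need are ${}_{C_1}\langle z\cdot d,w\rangle={}_{C_1}\langle z,w\cdot d^*\rangle$ and $\langle c\cdot z,w\rangle_{D_1}=\langle z,c^*\cdot w\rangle_{D_1}$ (the scalar from the \emph{opposite} side is the one that crosses), not the versions you wrote; with that correction your sketch matches the paper line for line.
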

where $x\in C_1$, $y\in D_1$, $z, w\in Z$, $t, s, r\in G$.
\begin{proof} We show the lemma by routine computations.
Let $x\in C_1$, $y\in D_1$, $z, w\in Z$, $t, s, r\in G$.
\newline
We prove (1):
\begin{align*}
& (e_A x\alpha_t^{\mathcal{A}}(e_A)\diamond [e_A \cdot z\cdot \beta_s (e_B )])\diamond
e_B y\beta_r (e_B ) \\
&=[e_A x\alpha_t^{\mathcal{A}}(e_A )\cdot \lambda_t (z)\cdot \beta_{ts}(e_B )]\diamond e_B y\beta_r (e_B ) \\
& =e_A x\alpha_t^{\mathcal{A}}(e_A )\cdot \lambda_t (z)\cdot \beta_{ts}(e_B )\beta_{ts}(y)\beta_{tsr}(e_B ) \\
& =e_A x\alpha_t^{\mathcal{A}}(e_A )\diamond [e_A \cdot z\cdot \beta_s (e_B )\beta_s (y)\beta_{sr}(e_B )] \\
& =e_A x\alpha_t^{\mathcal{A}}(e_A )\diamond([e_A \cdot z\cdot \beta_s (e_B )]\diamond e_B y\beta_r (e_B )) .
\end{align*}
We prove (2): 
\begin{align*}
& \la  e_A x \alpha_t^{\mathcal{A}}(e_A )\diamond [e_A \cdot z\cdot \beta_s (e_B )]\, ,\,
e_A \cdot w\cdot \beta_r (e_B ) \ra_{D} \\
& =\la e_A x\alpha_t^{\mathcal{A}}(e_A )\cdot \lambda_t (z)\cdot \beta_{ts}(e_B ) \, , \,
e_A \cdot w\cdot\beta_r (e_B ) \ra_{D} \\
& =\la e_A \cdot [x\alpha_t^{\mathcal{A}}(e_A )\cdot \lambda_t (z)]\cdot \beta_{ts}(e_B ) \, , \,
e_A \cdot w\cdot\beta_r (e_B ) \ra_{D} \\
& =\beta_{s^{-1}t^{-1}}(\la e_A \cdot [x\alpha_t^{\mathcal{A}}(e_A )\cdot\lambda_t (z)]\cdot\beta_{ts}(e_B ) \, , \,
e_A \cdot w\cdot \beta_r (e_B ) \ra_{D_1}) \\
& =\beta_{s^{-1}t^{-1}}(\la \alpha_t^{\mathcal{A}}(e_A )\cdot \lambda_t (z)\cdot \beta_{ts}(e_B ) \, , \,
\alpha_t^{\mathcal{A}}(e_A )x^* e_A \cdot w\cdot\beta_r (e_B ) \ra_{D_1}) \\
& =\beta_{s^{-1}}(\la e_A \cdot z\cdot \beta_s (e_B ) \, , \, 
e_A \alpha_{t^{-1}}^{\mathcal{A}}(x^* )\alpha_{t^{-1}}^{\mathcal{A}}(e_A )\cdot \lambda_{t^{-1}}(w)
\cdot \beta_{t^{-1}r}(e_B ) \ra_{D_1}) \\
& =\beta_{s^{-1}}(\la e_A \cdot z\cdot \beta_s (e_B ) \, , \,
e_A \alpha_{t^{-1}}^{\mathcal{A}}(x^* e_A )\diamond [e_A \cdot w\cdot\beta_r (e_B )] \ra_{D_1}) \\
& =\beta_{s^{-1}}(\la e_A \cdot z\cdot\beta_s (e_B ) \, , \,
(e_A x\alpha_t^{\mathcal{A}}(e_A ))^{\sharp}\diamond [e_A \cdot w\cdot\beta_r (e_B )]\ra_{D_1}) \\
& =\la e_A \cdot z\cdot\beta_s (e_B ) \, , \,
(e_A x\alpha_t^{\mathcal{A}}(e_A ))^{\sharp}\diamond [e_A \cdot w\cdot\beta_r (e_B )] \ra_{D} .
\end{align*}
We prove (3):
\begin{align*}
& {}_{C} \la e_A \cdot z\cdot \beta_s (e_B ) \, , \, [e_A \cdot w\cdot\beta_r (e_B )]\diamond e_B y\beta_t (e_B ) \ra \\
& ={}_{C} \la e_A \cdot z\cdot\beta_s (e_B ) \, , \, e_A \cdot w\cdot\beta_r (e_B )\beta_r (y)\beta_{rt}(e_B ) \ra \\
& ={}_{C_1} \la e_A \cdot z\cdot\beta_s (e_B ) \, , \, \lambda_{st^{-1}r^{-1}}(e_A \cdot w\cdot\beta_r (e_B )\beta_r (y)
\beta_{rt}(e_B ) \ra \\
& ={}_{C_1} \la e_A \cdot z\cdot \beta_s (e_B ) \, , \, \alpha_{st^{-1}r^{-1}}(e_A )\cdot \lambda_{st^{-1}r^{-1}}(w)
\cdot\beta_{st^{-1}}(e_B )\beta_{st^{-1}}(y)\beta_s (e_B ) \ra \\
& ={}_{C_1} \la e_A \cdot z\cdot \beta_s (e_B )\beta_{st^{-1}}(y^* )\beta_{st^{-1}}(e_B ) \, , \,
\alpha_{st^{-1}r^{-1}}(e_A )\cdot\lambda_{st^{-1}r^{-1}}(w)\cdot\beta_{st^{-1}}(e_B ) \ra \\
& ={}_{C} \la e_A \cdot [z\cdot \beta_s (e_B )\beta_{st^{-1}}(y^* )]\cdot\beta_{st^{-1}}(e_B ) \, , \,
e_A \cdot w\cdot \beta_r (e_B ) \ra \\
& ={}_C \la [e_A \cdot z\cdot \beta_s (e_B )]\diamond (e_B y\beta_t (e_B ))^{\sharp} \, , \,
e_A \cdot w\cdot \beta_r (e_B ) \ra .
\end{align*}
Therefore, we obtain the conclusion.
\end{proof}

By Lemma \ref {lem:property}, $W$ has Properties (7), (8) in \cite [Lemma 1.3]{KW2:discrete}.

\begin{lemma}\label{lem:basis}With the above notation, there are finite subsets
$\{u_i \}_i$ and $\{v_j \}_j$ of $W$ such that
$$
\sum_i u_i \diamond \la u_i ,\,  x \ra_{D}=x 
=\sum_j {}_C \la x ,\, v_j \ra \diamond v_j 
$$
for any $x\in W$.
\end{lemma}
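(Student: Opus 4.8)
The plan is to identify $W$ with a corner of the crossed-product equivalence bimodule $Z\rtimes_\lambda G$ and then to use that an equivalence bimodule between unital $C^*$-algebras has finite frames on both sides.

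First I would form $Z\rtimes_\lambda G$ as in Example \ref{exam:action} (via \cite[Definition 1.4, Proposition 1.7]{KW2:discrete}); it is a $(C_1\rtimes_{\alpha^{\mathcal{A}}}G)-(D_1\rtimes_{\beta}G)$-equivalence bimodule, with the operations written there for $Z,C_1,D_1,\alpha^{\mathcal{A}},\beta$ in place of $X,A,B,\alpha,\beta$. Regarding $e_A\in C_1\subset C_1\rtimes_{\alpha^{\mathcal{A}}}G$ and $e_B\in D_1\subset D_1\rtimes_{\beta}G$, a short computation with those formulas gives $e_A\cdot(zw_t)\cdot e_B=(e_A\cdot z\cdot\beta_t(e_B))w_t$, so $e_A\cdot(Z\rtimes_\lambda G)\cdot e_B=\bigoplus_{t\in G}Z_t w_t$, which we identify with $W=\bigoplus_{t\in G}Z_t$. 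Likewise $e_A(C_1\rtimes_{\alpha^{\mathcal{A}}}G)e_A=\bigoplus_{t\in G}(e_A C_1\alpha_t^{\mathcal{A}}(e_A))u_t=\bigoplus_{t\in G}Y_{\alpha_t^{\mathcal{A}}}u_t$, which by Lemma \ref{lem:iso} and the identifications already made is the unital $C^*$-algebra $C$ (with unit $e_A$); similarly $e_B(D_1\rtimes_{\beta}G)e_B\cong D$ (with unit $e_B$). One then checks, exactly as in Example \ref{exam:action}, that under these identifications the left $C$-action, right $D$-action, left $C$-valued inner product and right $D$-valued inner product that the corner $e_A\cdot(Z\rtimes_\lambda G)\cdot e_B$ inherits coincide with the maps $\diamond$, ${}_C\la-,-\ra$ and $\la-,-\ra_D$ defining $\mathcal{Z}$; this bookkeeping is the only delicate point of the argument.

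Next I would note that $e_A$ is a full projection in $C_1\rtimes_{\alpha^{\mathcal{A}}}G$: since $u_t e_A u_t^{*}=\alpha_t^{\mathcal{A}}(e_A)=e_{t^{-1}}$ and $\sum_{t\in G}e_{t^{-1}}=1$, the closed ideal generated by $e_A$ is everything; likewise $e_B$ is full in $D_1\rtimes_{\beta}G$ because $\sum_{t\in G}\beta_t(e_B)=\sum_{s\in G}\alpha_s^{\mathcal{B}}(e_B)=1$. Hence $e_A(C_1\rtimes_{\alpha^{\mathcal{A}}}G)$ is an $e_A(C_1\rtimes_{\alpha^{\mathcal{A}}}G)e_A-(C_1\rtimes_{\alpha^{\mathcal{A}}}G)$-equivalence bimodule, and similarly for $e_B$, so the double corner $e_A\cdot(Z\rtimes_\lambda G)\cdot e_B$ is an $e_A(C_1\rtimes_{\alpha^{\mathcal{A}}}G)e_A-e_B(D_1\rtimes_{\beta}G)e_B$-equivalence bimodule. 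By the previous paragraph this means that $W$, equipped with the operations $\diamond$, ${}_C\la-,-\ra$, $\la-,-\ra_D$, is a $C-D$-equivalence bimodule in which $1_C$ and $1_D$ act as $e_A$ and $e_B$.

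Finally, because $W$ is an equivalence bimodule between the unital $C^*$-algebras $C$ and $D$, the algebra of compact operators on the right Hilbert $D$-module $W$ is the unital algebra $C$; thus $\id_W$ is compact and $W$ is finitely generated and projective as a right $D$-module, and symmetrically as a left $C$-module. A finitely generated projective Hilbert module admits a finite frame, which yields finite subsets $\{u_i\}_i,\{v_j\}_j\subset W$ with $\sum_i u_i\diamond\la u_i,x\ra_D=x=\sum_j{}_C\la x,v_j\ra\diamond v_j$ for all $x\in W$, as required. The main obstacle is thus not any single hard step but the careful verification, in the first paragraph, that the corner structure inherited from $Z\rtimes_\lambda G$ is literally the structure defining $\mathcal{Z}$; once that is in place, fullness of the Jones projections and the standard finiteness of imprimitivity bimodules over unital algebras finish the proof.
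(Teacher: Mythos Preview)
Your argument is correct and takes a genuinely different route from the paper. The paper proceeds by explicit construction: starting from finite bases $\{z_i\}$ and $\{w_j\}$ of the $C_1$--$D_1$-equivalence bimodule $Z$, it sets $u_{i,t}=e_A\cdot z_i\cdot\beta_t(e_B)$ and $v_{j,t}=e_A\cdot\lambda_t(w_j)\cdot\beta_t(e_B)$ and then verifies the two reconstruction identities directly, using the explicit formulas for $\diamond$ and the inner products together with $\sum_{t\in G}\beta_t(e_B)=1$ and $\sum_{t\in G}\alpha_t^{\mathcal{A}}(e_A)=1$. Your approach instead identifies $W$ with the corner $e_A\cdot(Z\rtimes_\lambda G)\cdot e_B$, checks that $e_A$ and $e_B$ are full, and then invokes the general fact that an imprimitivity bimodule between unital $C^*$-algebras admits finite frames. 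This is more conceptual and in fact subsumes much of Lemmas~\ref{lem:condition4}--\ref{lem:full} in one stroke, since the corner identification already exhibits $W$ as an equivalence bimodule; the cost is that you do not obtain the concrete bases recorded in Remark~\ref{rem:basis}, and the ``bookkeeping'' you flag (that the inherited corner operations agree on the nose with the $\diamond$-structure defining $\mathcal{Z}$) is exactly the content that the paper spreads over its computational lemmas. Both approaches are sound; the paper's yields explicit formulas, while yours gives a cleaner structural reason why such bases must exist.
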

\begin{proof}
Since $Z$ is a $C_1 -D_1$-equivalence bimodule, there are finite subsets $\{z_i \}_i$ and
$\{w_j \}_j$ of $Z$ such that
$$
\sum_i z_i \cdot \la z_i , z \ra_{D_1} =z=\sum_j {}_{C_1} \la z, w_j \ra\cdot w_j
$$
for any $z\in Z$. Then for any $z\in Z$, $s\in G$,
\begin{align*}
& \sum_{i, t}[e_A \cdot z_i \cdot \beta_t (e_B )]\diamond \la e_A \cdot z_i \cdot\beta_t (e_B ) \, , \,
e_A \cdot z\cdot \beta_s (e_B ) \ra_{D} \\
& =\sum_{i, t}e_A \cdot z_i \cdot \beta_t (e_B )\diamond\beta_{t^{-1}}(\la e_A \cdot z_i \cdot\beta_t (e_B ) \, , \,
e_A \cdot z\cdot \beta_s (e_B ) \ra_{D_1} ) \\
& =\sum_{i, t}e_A \cdot z_i \cdot\beta_t (e_B )\diamond e_B \beta_{t^{-1}}(\la e_A \cdot z_i \, , \,
e_A \cdot z \ra_{D_1})\beta_{t^{-1}s}(e_B ) \\
& =\sum_{i, t}e_A \cdot z_i \cdot\beta_t (e_B )\la e_A \cdot z_i \, , \,
e_A \cdot z \ra_{D_1}\beta_s (e_B ) \\
& =\sum_{i, t}e_A \cdot [z_i \cdot \la z_i \cdot\beta_t (e_B ) \, , \, e_A \cdot z \ra_{D_1}]\cdot \beta_s (e_B ) \\
& =\sum_i e_A \cdot [z_i \cdot \la z_i \, , \, e_A \cdot z \ra_{D_1}]\cdot \beta_s (e_B ) \\
& =e_A \cdot z\cdot \beta_s (e_B )
\end{align*}
since $\sum_{t\in G}\beta_t (e_B )=1$ by \cite [Remark 3.4]{KT7:characterization}. Also, for any $z\in Z$, $s\in G$,
\begin{align*}
& \sum_{j, t}{}_C \la e_A \cdot z\cdot\beta_s (e_B ) \, , \, e_A \cdot\lambda_t (w_j )\cdot \beta_t (e_B ) \ra \diamond
[e_A  \cdot\lambda_t (w_j )\cdot \beta_t (e_B ) ] \\
& =\sum_{j, t}{}_{C_1} \la e_A \cdot z\cdot \beta_s (e_B ) \, , \, \lambda_{st^{-1}}(e_A \cdot \lambda_t (w_j )\cdot
\beta_t (e_B ))\ra \diamond [e_A \cdot \lambda_t (w_j )\cdot \beta_t (e_B )] \\
& =\sum_{j, t}{}_{C_1} \la e_A \cdot z\cdot \beta_s (e_B ) \, , \, \alpha_{st^{-1}}^{\mathcal{A}}(e_A )\cdot
\lambda_s (w_j )\cdot \beta_s (e_B ) \ra\diamond [e_A \cdot \lambda_t (w_j )\cdot \beta_t (e_B )] \\
& =\sum_{j, t}e_A \, {}_{C_1} \la z\cdot \beta_s (e_B ) \, , \, \lambda_s (w_j ) \ra \alpha_{st^{-1}}^{\mathcal{A}}(e_A )
\diamond [e_A \cdot \lambda_t (w_j )\cdot \beta_t (e_B )] \\
& =\sum_{j, t}e_A \, {}_{C_1} \la z\cdot \beta_s (e_B ) \, , \, \lambda_s (w_j ) \ra\alpha_{st^{-1}}^{\mathcal{A}}(e_A )
\cdot \lambda_s (w_j )\cdot\beta_s (e_B ) \\
& =\sum_j e_A \, {}_{C_1} \la z\cdot \beta_s (e_B ) \, , \, \lambda_s (w_j ) \ra\cdot \lambda_s (w_j )\cdot
\beta_s (e_B ) \\
& =\sum_j e_A \cdot \lambda_s ({}_{C_1} \la \lambda_{s^{-1}}(z)\cdot e_B \, , \, w_j \ra\cdot w_j \cdot e_B ) \\
& =e_A \cdot \lambda_s (\lambda_{s^{-1}}(z)\cdot e_B )=e_A \cdot z\cdot \beta_s (e_B )
\end{align*}
since $\sum_{t\in G}\alpha_{st^{-1}}^{\mathcal{A}}(e_A )=1$ for any $s\in G$ by
\cite [Remark 3.4]{KT7:characterization}.
Therefore, we obtain the conclusion.
\end{proof}

\begin{remark}\label{rem:basis} By Lemma \ref{lem:property}, $\{e_A \cdot z_i \cdot\beta_t (e_B ) \}_{i, t}$
is a right $D$-basis and $\{e_A \cdot \lambda_t (w_j )\cdot \beta_t (e_B )\}_{j, t}$ is a left $C$-basis of $W$
in the sense of Kajiwara and Watatani \cite {KW1:bimodule}.
\end{remark}

By Lemma \ref{lem:property}, $W$ has Properties (9), (10) in \cite [Lemma 1.3]{KW2:discrete}.
Hence by \cite [Lemma 1.3]{KW2:discrete}, $W$ is a Hilbert $C -D$-
bimodule in the sense of \cite [Definition 1.1]{KW2:discrete}.
Thus, $\mathcal{Z}$ has Conditions (5R), (6R) and (5L), (6L) in \cite [Definition 2.1]{AF:bundle}.

\begin{lemma}\label{lem:full} With the above notation, $\mathcal{Z}$ is an $\mathcal{A}_1 - \mathcal{B}_1^f$-equivalence bundle such that
$$
{}_{\mathcal{A}_1} \la Z_t , \, Z_s \ra =Y_{\alpha_{ts^{-1}}^{\mathcal{A}}} , \quad
\la Z_t ,\, Z_s \ra_{\mathcal{B}_1^f} =Y_{\beta_{t^{-1}s}}
$$
for any $t, s\in G$.
\end{lemma}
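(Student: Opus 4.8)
The plan is to reduce Lemma~\ref{lem:full} to the two fiberwise fullness identities, everything else having already been verified. By the definitions of the actions $\diamond$ and the inner products, together with Lemmas~\ref{lem:condition4} and \ref{lem:property} and \cite[Lemma~1.3]{KW2:discrete}, $\mathcal{Z}$ satisfies Conditions (1R)--(6R) and (1L)--(6L) in \cite[Definition~2.1]{AF:bundle} and $W$ is a Hilbert $C-D$-bimodule. Hence once I show
$$
{}_{\mathcal{A}_1}\la Z_t,\,Z_s\ra=Y_{\alpha_{ts^{-1}}^{\mathcal{A}}},\qquad
\la Z_t,\,Z_s\ra_{\mathcal{B}_1^f}=Y_{\beta_{t^{-1}s}}
$$
for all $t,s\in G$, Conditions (7R) and (7L) in \cite[Definition~2.1]{AF:bundle} follow exactly as in the proof of Lemma~\ref{lem:equivalence}; moreover these identities force $W$ to be full on both sides, hence a $C-D$-equivalence bimodule, and then Lemma~\ref{lem:equivalence} applied to the bundles $\mathcal{A}_1$ and $\mathcal{B}_1^f$ (with $\oplus_{t\in G}Y_{\alpha_t^{\mathcal{A}}}\cong C$ and $\oplus_{t\in G}Y_{\beta_t}\cong D$ as in the identifications above) gives the conclusion. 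So everything comes down to the two identities.

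For the right-hand one, unwinding the definition gives, for $z,w\in Z$,
$$
\la e_A\cdot z\cdot\beta_t(e_B),\,e_A\cdot w\cdot\beta_s(e_B)\ra_D
=e_B\,\beta_{t^{-1}}(\la e_A\cdot z,\,e_A\cdot w\ra_{D_1})\,\beta_{t^{-1}s}(e_B),
$$
and since $\la e_A\cdot z,\,e_A\cdot w\ra_{D_1}=\la z,\,e_A\cdot w\ra_{D_1}$, it suffices to prove $\overline{\operatorname{span}}\,\la Z,\,e_A\cdot Z\ra_{D_1}=D_1$; then $\la Z_t,Z_s\ra_D=e_B\,\beta_{t^{-1}}(D_1)\,\beta_{t^{-1}s}(e_B)=e_BD_1\beta_{t^{-1}s}(e_B)=Y_{\beta_{t^{-1}s}}$. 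The key point is that $e_A$ is a full projection of $C_1$, since $C_1=\overline{Ce_AC}$ is the $C^*$-basic construction; hence $\overline{\operatorname{span}}\,C_1\cdot(e_A\cdot Z)=\overline{\operatorname{span}}\,(C_1e_AC_1)\cdot Z=C_1\cdot Z=Z$ by nondegeneracy of the left $C_1$-action on $Z$. Combining this with the fullness $\overline{\operatorname{span}}\,\la Z,Z\ra_{D_1}=D_1$ of the $C_1-D_1$-equivalence bimodule $Z$ and the identity $\la c\cdot\zeta,\zeta'\ra_{D_1}=\la\zeta,c^*\cdot\zeta'\ra_{D_1}$ yields $\overline{\operatorname{span}}\,\la e_A\cdot Z,Z\ra_{D_1}=D_1$, and hence $\overline{\operatorname{span}}\,\la Z,e_A\cdot Z\ra_{D_1}=D_1$.

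The left-hand identity is handled symmetrically. From its definition, after absorbing the idempotent $\beta_t(e_B)$ and using that $\lambda_{ts^{-1}}$ is a bijection of $Z$, one gets ${}_C\la Z_t,Z_s\ra=e_A\big(\overline{\operatorname{span}}\,{}_{C_1}\la Z\cdot\beta_t(e_B),\,Z\ra\big)\alpha_{ts^{-1}}^{\mathcal{A}}(e_A)$, so it suffices to show $\overline{\operatorname{span}}\,{}_{C_1}\la Z\cdot\beta_t(e_B),\,Z\ra=C_1$. Now $\beta_t(e_B)=\alpha_{f(t)}^{\mathcal{B}}(e_B)$ is a full projection of $D_1$ because $e_B$ is full in $D_1$ (the $C^*$-basic construction of $B\subset D$) and $\beta_t$ is an automorphism of $D_1$; therefore $\overline{\operatorname{span}}\,Z\cdot\beta_t(e_B)\cdot D_1=Z$, and together with the fullness of $Z$ over $C_1$ and the identity ${}_{C_1}\la\zeta\cdot d,\zeta'\ra={}_{C_1}\la\zeta,\zeta'\cdot d^*\ra$ this gives $\overline{\operatorname{span}}\,{}_{C_1}\la Z\cdot\beta_t(e_B),Z\ra=C_1$, whence ${}_C\la Z_t,Z_s\ra=e_AC_1\alpha_{ts^{-1}}^{\mathcal{A}}(e_A)=Y_{\alpha_{ts^{-1}}^{\mathcal{A}}}$. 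With both identities proved, $\overline{\operatorname{span}}\bigcup_{t,s\in G}\la Z_t,Z_s\ra_D=\bigoplus_{r\in G}Y_{\beta_r}=D$ and $\overline{\operatorname{span}}\bigcup_{t,s\in G}{}_C\la Z_t,Z_s\ra=\bigoplus_{r\in G}Y_{\alpha_r^{\mathcal{A}}}=C$, so $W$ is full and the reduction of the first paragraph completes the proof.

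The main obstacle is the pair of span identities above. The proof hinges on recognizing $e_A$ as a full projection of $C_1$ and $\beta_t(e_B)$ as a full projection of $D_1$, and on transferring the fullness of $Z$ over $C_1$ and over $D_1$ through these compressions by means of the standard bimodule relations $\la c\cdot\zeta,\zeta'\ra_{D_1}=\la\zeta,c^*\cdot\zeta'\ra_{D_1}$ and ${}_{C_1}\la\zeta\cdot d,\zeta'\ra={}_{C_1}\la\zeta,\zeta'\cdot d^*\ra$ together with nondegeneracy of the actions. One must also track carefully which automorphism ($\beta_{t^{-1}}$ on the right, $\alpha_{ts^{-1}}^{\mathcal{A}}$ on the left) is applied, so that the compressed dense span lands in exactly $Y_{\beta_{t^{-1}s}}$ and $Y_{\alpha_{ts^{-1}}^{\mathcal{A}}}$.
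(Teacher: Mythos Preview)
Your argument for the two fullness identities is correct and takes a genuinely different route from the paper. The paper produces explicit witnesses: fixing a quasi-basis $\{(d_j,d_j^*)\}$ for $E^B$ (so $\sum_j d_j e_B d_j^* =1$) and a finite set $\{z_i\}\subset Z$ with $\sum_i {}_{C_1}\la z_i,z_i\ra=1$, it computes directly that
\[
\sum_{i,j}{}_C\la e_A c\cdot\lambda_t(z_i)\cdot\beta_t(d_je_B),\,e_A\cdot\lambda_s(z_i)\cdot\beta_s(d_je_B)\ra=e_A c\,\alpha_{ts^{-1}}^{\mathcal{A}}(e_A)
\]
for every $c\in C$, and symmetrically on the right using a quasi-basis for $E^A$ and $\{w_i\}\subset Z$ with $\sum_i\la w_i,w_i\ra_{D_1}=1$. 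Your proof instead exploits that $e_A$ is full in $C_1$ and $\beta_t(e_B)$ is full in $D_1$, transferring fullness of $Z$ through the compressions via the adjoint relations. This is more conceptual; the paper's computation has the advantage of giving \emph{algebraic} equality of linear spans directly, which matches the paper's convention that $\la Z_t,Z_s\ra_D$ denotes the linear span (not its closure). Your argument as written yields only density, so you should observe that in this unital index-finite setting $C_1=\mathrm{span}\,Ce_AC$ and $D_1=\mathrm{span}\,De_BD$ algebraically, whence all the closures can be dropped.

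There is, however, one genuine omission. The paper's proof of this lemma \emph{begins} by verifying compatibility of the two inner products on $W$:
\[
{}_C\la e_A\!\cdot\! z\!\cdot\!\beta_t(e_B),\,e_A\!\cdot\! y\!\cdot\!\beta_s(e_B)\ra\diamond[e_A\!\cdot\! w\!\cdot\!\beta_r(e_B)]
=[e_A\!\cdot\! z\!\cdot\!\beta_t(e_B)]\diamond\la e_A\!\cdot\! y\!\cdot\!\beta_s(e_B),\,e_A\!\cdot\! w\!\cdot\!\beta_r(e_B)\ra_D .
\]
This identity is \emph{not} among the properties established in Lemmas~\ref{lem:condition4}--\ref{lem:basis}: Lemma~\ref{lem:property} records the adjoint relations $\la c\diamond\zeta,\eta\ra_D=\la\zeta,c^{\sharp}\diamond\eta\ra_D$ and ${}_C\la\zeta,\eta\diamond d\ra={}_C\la\zeta\diamond d^{\sharp},\eta\ra$, but not the compatibility of ${}_C\la-,-\ra$ with $\la-,-\ra_D$. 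Without it your inference ``full on both sides, hence a $C$--$D$-equivalence bimodule'' does not follow, and Lemma~\ref{lem:equivalence} cannot be invoked. The check is routine (it unwinds to the compatibility of ${}_{C_1}\la-,-\ra$ and $\la-,-\ra_{D_1}$ on the equivalence bimodule $Z$), but it must be included.
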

\begin{proof}First, we show that the left $C$-valued inner product and the right $D$-valued inner product on $W$
are compatible. Let $y, z, w\in Z$ and $t, s, r \in G$. Since $Z$ is a $C_1 -D_1$-equivalence bimodule,
\begin{align*}
& {}_C \la e_A \cdot z\cdot\beta_t (e_B ) \, , \, e_A \cdot y\cdot\beta_s (e_B ) \ra\diamond
[e_A \cdot w\cdot \beta_r (e_B )] \\
& ={}_{C_1} \la e_A \cdot z\cdot \beta_t (e_B ) \, , \, \lambda_{ts^{-1}}(e_A  \cdot y\cdot \beta_s (e_B )) \ra
\diamond [e_A \cdot w\cdot \beta_r (e_B )] \\
& ={}_{C_1} \la e_A \cdot z\cdot \beta_t (e_B ) \, , \, \alpha_{ts^{-1}}^{\mathcal{A}}(e_A )\cdot
\lambda_{ts^{-1}}(y)\cdot \beta_t (e_B ) \ra \diamond [e_A \cdot w\cdot \beta_r (e_B )] \\
& =e_A \, {}_{C_1} \la z\cdot \beta_t (e_B ) \, , \, \lambda_{ts^{-1}}(y)\cdot \beta_t (e_B ) \ra
\alpha_{ts^{-1}}^{\mathcal{A}}(e_A )\diamond [e_A \cdot w\cdot \beta_r (e_B )] \\
& =e_A \, {}_{C_1} \la z\cdot \beta_t (e_B ) \, , \, \lambda_{ts^{-1}}(y)\cdot \beta_t (e_B ) \ra
\alpha_{ts^{-1}}^{\mathcal{A}}(e_A )\cdot\lambda_{ts^{-1}}(w)\cdot \beta_{ts^{-1}r}(e_B ) \\
& ={}_{C_1} \la e_A \cdot z\cdot \beta_t (e_B ) \, , \, \alpha_{ts^{-1}}^{\mathcal{A}}(e_A )\cdot
\lambda_{ts^{-1}}(y)\cdot\beta_t (e_B ) \ra\cdot [\alpha_{ts^{-1}}^{\mathcal{A}}(e_A )\cdot
\lambda_{ts^{-1}}(w)\cdot\beta_{ts^{-1}r}(e_B )] \\
& =[e_A \cdot z\cdot \beta_t (e_B )]\cdot \la \alpha_{ts^{-1}}^{\mathcal{A}}(e_A ) \cdot \lambda_{ts^{-1}}(y)
\cdot \beta_t (e_B) \, , \, \alpha_{ts^{-1}}^{\mathcal{A}}(e_A )\cdot \lambda_{ts^{-1}}(w)\cdot
\beta_{ts^{-1}r}(e_B ) \ra_{D_1} \\
& =[e_A \cdot z\cdot \beta_t (e_B )]\cdot\beta_{ts^{-1}}(\la e_A \cdot y\cdot\beta_s (e_B ) \, , 
e_A \cdot w\cdot \beta_r (e_B) \ra_{D_1} \\
& =[e_A \cdot z\cdot \beta_t (e_B )]\diamond \la e_A \cdot y\cdot\beta_s (e_B ) \, , \, e_A \cdot w
\cdot\beta_r (e_B ) \ra_{D} .
\end{align*}
Hence the left $C$-valued inner product and the right $D$-valued inner product are compatible.
Next, we show that
$$
{}_{\mathcal{A}_1} \la Z_t , \, Z_s \ra =Y_{\alpha_{ts^{-1}}^{\mathcal{A}}} , \quad
\la Z_t , \, Z_s \ra_{\mathcal{B}_1^f} =Y_{\beta_{t^{-1}s}}
$$
for any $t, s\in G$. Let $t, s \in G$. Since $E^B$ is of Watatani index-finite type,
there is a quasi-basis $\{(d_j , d_j^* )\}\subset D\times D$ for $E^B$. Thus
$\sum_j d_j e_B d_j^* =1$. Since $Z$ is a $C_1 -D_1$-equivalence bimodule,
there is a finite subset $\{z_i \}$ of $Z$ such that
$\sum_i {}_{C_1} \la z_i , z_i \ra =1$. Let $c\in C$. Then
\begin{align*}
& \sum_{i, j}{}_C \la e_A c\cdot \lambda_t (z_i )\cdot \beta_t (d_j e_B ) \, , \, e_A \cdot
\lambda_s (z_i )\cdot \beta_s (d_j e_B ) \ra \\
& =\sum_{i, j}{}_{C_1} \la e_A c\cdot \lambda_t (z_i )\cdot \beta_t (d_j e_B ) \, , \, \lambda_{ts^{-1}}
(e_A \cdot \lambda_s (z_i )\cdot \beta_s (d_j e_B )) \ra \\
& =\sum_{i, j}{}_{C_1}\la e_A c \cdot \lambda_t (z_i )\cdot\beta_t (d_j e_B ) \, , \,
\alpha_{ts^{-1}}^{\mathcal{A}}(e_A )\cdot \lambda_t (z_i )\cdot\beta_t (d_j e_B ) \ra \\
& =\sum_{i, j}e_A \, {}_{C_1} \la c\cdot \lambda_t (z_i )\cdot\beta_t (d_j e_B d_j^* ) \, , \,
\lambda_t (z_i )\ra \alpha_{ts^{-1}}^{\mathcal{A}}(e_A ) \\
& =\sum_i e_A c \, {}_{C_1} \la \lambda_t (z_i ) \, , \, \lambda_t (z_i ) \ra\alpha_{ts^{-1}}^{\mathcal{A}}(e_A ) \\
& =\sum_i e_A c\alpha_t^{\mathcal{A}}({}_{C_1} \la z_i \, , \, z_i \ra )\alpha_{ts^{-1}}^{\mathcal{A}}(e_A ) \\
& =e_A c \alpha_{ts^{-1}}^{\mathcal{A}}(e_A ) .
\end{align*}
Hence we obtain that ${}_C \la Z_t , \, Z_s \ra =Y_{\alpha_{ts^{-1}}^{\mathcal{A}}}$ for any $t, s\in G$.
Also, since $E^A$ is of Watatani index-finite type, there is a quasi-basis $\{ (c_j , c_j^* ) \}\subset C\times C$ for $E^A$.
Thus $\sum_j c_j e_A c_j^* =1$. Since $Z$ is a $C_1 -D_1$-equivalence bimodule,
there is a finite subset $\{w_i \}$ of $Z$ such that
$\sum_i \la w_i , w_i \ra_{D_1}  =1$. Let $d\in D_1$. Then
\begin{align*}
& \sum_{i, j}\la e_A c_j^* \cdot w_i \cdot\beta_t (e_B ) \, , \, e_A c_j^* \cdot w_i \cdot d\beta_s (e_B ) \ra_{D} \\
& =\sum_{i, j}\beta_{t^{-1}}(\la e_A c_j^* \cdot w_i \cdot \beta_t (e_B ) \, , \,
e_A c_j^* \cdot w_i \cdot d\beta_s (e_B ) \ra_{D_1} ) \\
& =\sum_{i, j}e_B \beta_{t^{-1}}(\la e_A c_j^* \cdot w_i \, , \,
e_A c_j^* \cdot w_i \ra_{D_1})\beta_{t^{-1}}(d)\beta_{t^{-1}s}(e_B ) \\
& =\sum_{i, j}e_B \beta_{t^{-1}}(\la w_i \, , \, c_j e_A c_j^* \cdot w_i \ra_{D_1})\beta_{t^{-1}}(d)\beta_{t^{-1}s}(e_B ) \\
& =\sum_i e_B \beta_{t^{-1}}(\la w_i , w_i \ra_{D_1})\beta_{t^{-1}}(d)\beta_{t^{-1}s}(e_B ) \\
& =e_B \beta_{t^{-1}}(d)\beta_{t^{-1}s}(e_B ) .
\end{align*}
Hence we obtain that $\la Z_t , \,  Z_s \ra_{D}=Y_{\beta_{t^{-1}s}}$ for any $t, s\in G$.
Therefore, we obtain the conclusion.
\end{proof}
Combining the above lemmas, we obtain the following:

\begin{prop}\label{prop:combine} With the above notation, $\mathcal{Z}$ is an
$\mathcal{A}_1 -\mathcal{B}_1^f$-equivalence bundle over $G$.
\end{prop}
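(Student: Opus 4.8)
The plan is to assemble the preceding lemmas, together with the cited result \cite[Lemma 1.3]{KW2:discrete}, into a verification that $\mathcal{Z}=\{Z_t\}_{t\in G}$ satisfies all of Conditions $(1R)$--$(7R)$ and $(1L)$--$(7L)$ of \cite[Definition 2.1]{AF:bundle}, which is precisely what it means for $\mathcal{Z}$ to be an $\mathcal{A}_1-\mathcal{B}_1^f$-equivalence bundle over $G$.

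First I would record that Conditions $(1R)$--$(3R)$ and $(1L)$--$(3L)$ are immediate from the way the left $C$-action $\diamond$, the right $D$-action $\diamond$, and the $C$- and $D$-valued inner products on $W=\oplus_{t\in G}Z_t$ were defined, as already observed just before Lemma \ref{lem:condition4}. Conditions $(4R)$ and $(4L)$ are exactly the content of Lemma \ref{lem:condition4}. This leaves Conditions $(5R)$, $(6R)$, $(5L)$, $(6L)$ (completeness of each fibre and compatibility of the inner products with the module structure) and Conditions $(7R)$, $(7L)$ (graded fullness).

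For the first group I would route the argument through \cite[Lemma 1.3]{KW2:discrete}. By Lemma \ref{lem:condition4}, $W$ is a $C-D$-bimodule with Properties $(1)$--$(6)$ of that lemma; Lemma \ref{lem:property} supplies Properties $(7)$, $(8)$; and Lemma \ref{lem:basis} together with Remark \ref{rem:basis} supplies Properties $(9)$, $(10)$, namely the existence of a finite right $D$-basis and a finite left $C$-basis of $W$. Hence \cite[Lemma 1.3]{KW2:discrete} applies and $W$ is a Hilbert $C-D$-bimodule in the sense of \cite[Definition 1.1]{KW2:discrete}; restricting the resulting structure to the fibres $Z_t$ gives Conditions $(5R)$, $(6R)$, $(5L)$, $(6L)$ of \cite[Definition 2.1]{AF:bundle}. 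Finally, Lemma \ref{lem:full} establishes that the two inner products on $W$ are compatible and that ${}_{\mathcal{A}_1}\la Z_t,Z_s\ra=Y_{\alpha_{ts^{-1}}^{\mathcal{A}}}$ and $\la Z_t,Z_s\ra_{\mathcal{B}_1^f}=Y_{\beta_{t^{-1}s}}$ for all $t,s\in G$, which in particular yields Conditions $(7R)$ and $(7L)$. Collecting all of these, $\mathcal{Z}$ is an $\mathcal{A}_1-\mathcal{B}_1^f$-equivalence bundle over $G$.

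Since every individual condition has already been secured in the lemmas above, I do not expect any genuine obstacle here; the only point requiring a little care is the translation between the ``Properties $(1)$--$(10)$'' bookkeeping of \cite[Lemma 1.3]{KW2:discrete} and the ``Conditions $(1R)$--$(7R)$, $(1L)$--$(7L)$'' bookkeeping of \cite[Definition 2.1]{AF:bundle}, i.e.\ checking that the Hilbert $C^*$-bimodule structure produced on the total module $W$ restricts to exactly the fibre-wise data demanded of an equivalence bundle. This is routine once the two lists of axioms are matched side by side.
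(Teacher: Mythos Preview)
Your proposal is correct and follows essentially the same approach as the paper: the paper's proof simply reads ``This is immediate by Lemmas \ref{lem:condition4}, \ref{lem:property}, \ref{lem:basis}, \ref{lem:full},'' and you have spelled out precisely how those lemmas combine (via \cite[Lemma 1.3]{KW2:discrete}) to yield all of Conditions $(1R)$--$(7R)$ and $(1L)$--$(7L)$ of \cite[Definition 2.1]{AF:bundle}.
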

\begin{proof}
This is immediate by Lemmas \ref{lem:condition4}, \ref {lem:property}, \ref {lem:basis}, \ref {lem:full}.
\end{proof}

\begin{thm}\label{thm:inverse} Let $\mathcal{A}=\{A_t \}_{t\in G}$ and $\mathcal{B}=\{B_t \}_{t\in G}$
be saturated $C^*$-algebraic bundles over a finite group $G$. Let $e$ be the unit element in $G$.
Let $C=\oplus_{t\in G}A_t$, $D=\oplus_{t\in G}B_t$
and $A=A_e$, $B=B_e$. We suppose that $C$ and $D$ are unital and that $A$ and $B$ have the
unit elements in $C$ and $D$, respectively. Let $A\subset C$ and $B\subset D$ be the unital inclusions
of unital $C^*$-algebras induced by $\mathcal{A}$ and $\mathcal{B}$, respectively. Also, we suppose that
$A' \cap C=\BC 1$. If $A\subset C$ and $B\subset D$ are strongly Morita equivalent, then
there are an automorphism $f$ of $G$ and an $\mathcal{A}-\mathcal{B}^f$-equivalence bundle
$\mathcal{Z}=\{Z_t \}_{t\in G}$ satisfying that
$$
{}_C \la Z_t , \, Z_s \ra=A_{ts^{-1}} , \quad \la Z_t ,\,  Z_s \ra_{D}=B_{f(t^{-1}s)}
$$
for any $t, s\in G$, where $\mathcal{B}^f$ is the $C^*$-algebraic bundle over $G$ induced by
$\mathcal{B}$ and $f$ defined by $\mathcal{B}^f =\{B_{f(t)}\}_{t\in G}$.
\end{thm}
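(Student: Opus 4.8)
The plan is to assemble what Section \ref{sec:inclusion} has already built, and then relabel the acting bundles. From the hypotheses ($\mathcal{A}$ and $\mathcal{B}$ saturated, $C$, $D$ unital with $A$, $B$ unital subalgebras, $A'\cap C=\BC 1$, and $A\subset C$ strongly Morita equivalent to $B\subset D$), the discussion preceding Lemma \ref{lem:condition4} produces an automorphism $f$ of $G$, the action $\beta$ of $G$ on $D_1$ with $\beta_t=\alpha_{f(t)}^{\mathcal{B}}$, a $C_1-D_1$-equivalence bimodule $Z$ carrying an action $\lambda$ of $G$ with respect to $(C_1,D_1,\alpha^{\mathcal{A}},\beta)$, and the bundle $\mathcal{Z}=\{Z_t\}_{t\in G}$ with $Z_t=e_A\cdot Z\cdot\beta_t(e_B)$. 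By Proposition \ref{prop:combine}, $\mathcal{Z}$ is an $\mathcal{A}_1-\mathcal{B}_1^f$-equivalence bundle over $G$, and by Lemma \ref{lem:full},
$$
{}_{\mathcal{A}_1}\la Z_t,Z_s\ra=Y_{\alpha_{ts^{-1}}^{\mathcal{A}}}, \qquad \la Z_t,Z_s\ra_{\mathcal{B}_1^f}=Y_{\beta_{t^{-1}s}}
$$
for all $t,s\in G$. Thus the whole bundle is in hand; only the acting bundles need relabelling.

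The tool for the relabelling is Lemma \ref{lem:iso}. Applied to the saturated bundle $\mathcal{A}$ it gives a $C^*$-algebraic bundle isomorphism $\pi^{\mathcal{A}}=\{\pi_t^{\mathcal{A}}\}$, $\pi_t^{\mathcal{A}}(a)=e_A a$, of $\mathcal{A}=\{A_t\}$ onto $\mathcal{A}_1=\{Y_{\alpha_t^{\mathcal{A}}}\}$; applied to the saturated bundle $\mathcal{B}$ it gives such an isomorphism $\pi^{\mathcal{B}}=\{\pi_t^{\mathcal{B}}\}$, $\pi_t^{\mathcal{B}}(b)=e_B b$, of $\mathcal{B}=\{B_t\}$ onto $\mathcal{B}_1=\{Y_{\alpha_t^{\mathcal{B}}}\}$. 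Since $f$ is a group automorphism and $\beta_t=\alpha_{f(t)}^{\mathcal{B}}$, the fibre, product and involution of $\mathcal{B}_1^f$ in degree $t$ are exactly those of $\mathcal{B}_1$ in degree $f(t)$; hence $\{\pi_{f(t)}^{\mathcal{B}}\}_{t\in G}$ is a $C^*$-algebraic bundle isomorphism of $\mathcal{B}^f=\{B_{f(t)}\}$ onto $\mathcal{B}_1^f=\{Y_{\beta_t}\}$ (note $\mathcal{B}^f$ is again saturated, since $\overline{B_{f(t)}B_{f(t)}^*}=B$ for all $t$, and $\oplus_t B_{f(t)}=\oplus_r B_r=D$, so the algebra $D$ is unchanged).

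With $\pi^{\mathcal{A}}$ and $\{\pi_{f(t)}^{\mathcal{B}}\}$ I would then transport the structure of $\mathcal{Z}$ by pull-back: the left $\mathcal{A}$-action $a\cdot\xi:=\pi_t^{\mathcal{A}}(a)\diamond\xi$ for $a\in A_t$, the right $\mathcal{B}^f$-action $\xi\cdot b:=\xi\diamond\pi_{f(t)}^{\mathcal{B}}(b)$ for $b\in B_{f(t)}$, and the inner products ${}_C\la\xi,\eta\ra:=(\pi_{ts^{-1}}^{\mathcal{A}})^{-1}({}_{\mathcal{A}_1}\la\xi,\eta\ra)$ and $\la\xi,\eta\ra_D:=(\pi_{f(t^{-1}s)}^{\mathcal{B}})^{-1}(\la\xi,\eta\ra_{\mathcal{B}_1^f})$ for $\xi\in Z_t$, $\eta\in Z_s$. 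Conditions (1R)--(7R) and (1L)--(7L) of \cite[Definition 2.1]{AF:bundle} are identities among the acting fibres, their elements and the two inner products, so they pass verbatim through an isomorphism of the acting bundles; and $W=\oplus_{t\in G}Z_t$ remains a $C-D$-equivalence bimodule via the identifications $\oplus_t Y_{\alpha_t^{\mathcal{A}}}\cong C$, $\oplus_t Y_{\beta_t}\cong D$ already recorded in Section \ref{sec:inclusion}. Finally, $\pi_{ts^{-1}}^{\mathcal{A}}$ maps $A_{ts^{-1}}$ onto $Y_{\alpha_{ts^{-1}}^{\mathcal{A}}}$ and $\pi_{f(t^{-1}s)}^{\mathcal{B}}$ maps $B_{f(t^{-1}s)}$ onto $Y_{\alpha_{f(t^{-1}s)}^{\mathcal{B}}}=Y_{\beta_{t^{-1}s}}$, so applying the inverses to the displayed fullness relations turns them into ${}_C\la Z_t,Z_s\ra=A_{ts^{-1}}$ and $\la Z_t,Z_s\ra_D=B_{f(t^{-1}s)}$, which is the claim.

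All the substance --- Conditions (4R), (4L), the Hilbert-bimodule property through \cite[Lemma 1.3]{KW2:discrete}, and the two fullness computations --- is already contained in Lemmas \ref{lem:condition4}--\ref{lem:full}, so I expect the only real care needed here to be organisational: tracking the reindexing by $f$, i.e. verifying that the degree-$(t^{-1}s)$ fibre of $\mathcal{B}^f$ is $B_{f(t^{-1}s)}$ (so that the target of $\la Z_t,Z_s\ra_D$ is correct) and that $\mathcal{B}_1^f$ is genuinely the $f$-reindexing of $\mathcal{B}_1$, so that Lemma \ref{lem:iso} can be quoted on the $\mathcal{B}$-side without circularity.
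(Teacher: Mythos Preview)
Your proposal is correct and follows exactly the paper's approach: the paper's proof is the single sentence ``This is immediate by Lemma \ref{lem:iso} and Proposition \ref{prop:combine},'' and you have simply unpacked how those two results combine --- Proposition \ref{prop:combine} gives the $\mathcal{A}_1-\mathcal{B}_1^f$-equivalence bundle with the required fullness (via Lemma \ref{lem:full}), and Lemma \ref{lem:iso} supplies the bundle isomorphisms $\mathcal{A}\cong\mathcal{A}_1$ and $\mathcal{B}^f\cong\mathcal{B}_1^f$ through which the structure is transported. Your care with the $f$-reindexing is appropriate and not made explicit in the paper.
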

\begin{proof}
This is immediate by Lemma \ref{lem:iso} and Proposition \ref{prop:combine}.
\end{proof}

\section{Application}\label{sec:appli} Let $A$ and $B$ be unital $C^*$-algebras
and $X$ a Hilbert $A-B$-bimodule. Let $\widetilde{X}$ be its dual Hilbert
$B-A$-bimodule. For any $x\in X$, $\widetilde{x}$ denotes the element
in $\widetilde{X}$ induced by $x\in X$. 

\begin{lemma}\label{lem:tilde} Let $A$, $B$ and $C$ be unital $C^*$-algebras.
Let $X$ be a Hilbert $A-B$-bimodule and $Y$ a Hilbert $B-C$-bimodule. Then
$\widetilde{X\otimes_B Y}\cong \widetilde{Y}\otimes_B \widetilde{X}$ as Hilbert
$C-A$-bimodules.
\end{lemma}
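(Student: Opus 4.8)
The plan is to exhibit an explicit isomorphism between $\widetilde{X\otimes_B Y}$ and $\widetilde{Y}\otimes_B\widetilde{X}$ on the level of elementary tensors and then check that it respects the bimodule structure and both inner products. First I would recall the standard recipe: for a Hilbert $A$--$B$-bimodule $X$, the dual $\widetilde{X}$ has underlying set $\{\widetilde{x}:x\in X\}$ with $b\cdot\widetilde{x}\cdot a=\widetilde{a^*\cdot x\cdot b^*}$, left $B$-valued inner product ${}_B\langle\widetilde{x},\widetilde{y}\rangle=\langle x,y\rangle_B$, and right $A$-valued inner product $\langle\widetilde{x},\widetilde{y}\rangle_A={}_A\langle x,y\rangle$. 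With this, the candidate map is
\begin{equation*}
\Phi:\widetilde{X\otimes_B Y}\longrightarrow\widetilde{Y}\otimes_B\widetilde{X},\qquad
\Phi(\widetilde{x\otimes y})=\widetilde{y}\otimes\widetilde{x}
\end{equation*}
for $x\in X$, $y\in Y$, extended conjugate-linearly (the duality operation is conjugate-linear, so $\Phi$ is linear). I would first argue that $\Phi$ is well defined: the relations defining the balanced tensor product $X\otimes_B Y$ over $B$, namely $x\cdot b\otimes y=x\otimes b\cdot y$, pass under the tilde to $\widetilde{x\cdot b\otimes y}=\widetilde{b^*\cdot y}\otimes\widetilde{x}=\widetilde{y}\otimes\widetilde{b^*\cdot x}=\widetilde{y}\otimes\widetilde{x\cdot b}$, which is exactly the balancing relation in $\widetilde{Y}\otimes_B\widetilde{X}$; one also notes $\Phi$ has an obvious two-sided inverse $\widetilde{y}\otimes\widetilde{x}\mapsto\widetilde{x\otimes y}$, so bijectivity is automatic once well-definedness on both sides is checked.

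Next I would verify the bimodule compatibility. The left $C$-action on $\widetilde{X\otimes_B Y}$ is $c\cdot\widetilde{x\otimes y}=\widetilde{(x\otimes y)\cdot c^*}=\widetilde{x\otimes(y\cdot c^*)}$, and applying $\Phi$ gives $\widetilde{y\cdot c^*}\otimes\widetilde{x}=(c\cdot\widetilde{y})\otimes\widetilde{x}=c\cdot(\widetilde{y}\otimes\widetilde{x})$, matching the left $C$-action on $\widetilde{Y}\otimes_B\widetilde{X}$; the right $A$-action is handled symmetrically using $\widetilde{x\otimes y}\cdot a=\widetilde{(a^*\cdot x)\otimes y}$. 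Then I would check the two inner products. For the right $A$-valued inner product on $\widetilde{X\otimes_B Y}$, using $\langle\widetilde{w},\widetilde{z}\rangle_A={}_A\langle w,z\rangle$ for a Hilbert $C$--$A$ or $A$--$C$ bimodule together with the standard formula ${}_A\langle x\otimes y,x'\otimes y'\rangle={}_A\langle x\cdot{}_B\langle y,y'\rangle,x'\rangle$ (equivalently ${}_A\langle x\cdot\langle y,y'\rangle_B^{\,*},\ \cdot\,\rangle$ in the appropriate convention), one reduces everything to inner products in $X$ and $Y$ and recognizes the result as $\langle\widetilde{y}\otimes\widetilde{x},\widetilde{y'}\otimes\widetilde{x'}\rangle_A$ computed via the analogous formula in $\widetilde{Y}\otimes_B\widetilde{X}$, using $\langle\widetilde{x},\widetilde{x'}\rangle_A={}_A\langle x,x'\rangle$ and ${}_B\langle\widetilde{y},\widetilde{y'}\rangle=\langle y,y'\rangle_B$. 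The left $C$-valued inner product is treated in the same way, swapping the roles of left and right.

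The main obstacle — and really the only delicate point — is bookkeeping: keeping straight which inner product is $C$-valued and which is $A$-valued on each of the four modules $X$, $Y$, $\widetilde{X}$, $\widetilde{Y}$, and making sure the conjugate-linearity of the tilde operation is threaded consistently through the balanced-tensor relations and through the defining formulas for the inner products on an interior tensor product. I would organize the write-up so that each of the four checks (well-definedness, left/right actions, two inner products) is a short displayed computation reading $\Phi(\text{l.h.s.})=\text{r.h.s.}$, all reduced to identities already known for $X$ and $Y$ and for the duality construction of a single Hilbert bimodule; there is no analytic subtlety since everything is algebraic on elementary tensors and then extended by linearity/continuity, the modules being over unital $C^*$-algebras. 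Hence the conclusion $\widetilde{X\otimes_B Y}\cong\widetilde{Y}\otimes_B\widetilde{X}$ as Hilbert $C$--$A$-bimodules follows.
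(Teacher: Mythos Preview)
Your proposal is correct and follows exactly the same approach as the paper: define the map $\pi(\widetilde{x\otimes y})=\widetilde{y}\otimes\widetilde{x}$ and verify that it is a Hilbert $C$--$A$-bimodule isomorphism. The paper's own proof simply states the map and appeals to ``routine computations'', whereas you have sketched out what those computations are (well-definedness via the balancing relation, compatibility with the actions, preservation of both inner products); so your write-up is in fact more detailed than the paper's, but identical in substance.
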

\begin{proof}Let $\pi$ be the map from $\widetilde{X\otimes_B Y}$ to
$\widetilde{Y}\otimes_B \widetilde{X}$ defined by
$$
\pi(\widetilde{x\otimes y})=\widetilde{y}\otimes\widetilde{x}
$$
for any $x\in X$, $y\in Y$. Then by routine computaions, we can see that
$\pi$ is a Hilbert $C-A$-bimodule isomorphism of $\widetilde{X\otimes_B Y}$ onto
$\widetilde{Y}\otimes_B \widetilde{X}$.
\end{proof}

We identify $\widetilde{X\otimes_B Y}$ with $\widetilde{Y}\otimes_B \widetilde{X}$
by the isomorphism $\pi$ defined in the proof of Lemma \ref {lem:tilde}.
Next, we give the definition of an involutive Hilbert $A-A$-bimodule
modifying \cite {KT6:involutive}.

\begin{Def}\label{def:involutive} We say that a Hilbert $A-A$-bimodule $X$ is
\sl
involutive
\rm
if there exists a conjugate linear map $x\in X \mapsto x^{\natural}\in X$ such that
\newline
(1) $(x^{\natural})^{\natural}=x$, \, $x\in X$,
\newline
(2) $(a\cdot x\cdot b)^{\natural}=b^* \cdot x^{\natural}\cdot a^*$, \, $x\in X$, $a, b\in A$,
\newline
(3) ${}_A \la x, \, y^{\natural} \ra =\la x^{\natural} , \, y \ra_A$, \, $x, y\in X$.
\end{Def}

We call the above conjugate linear map $\natural$ an
\sl
involution
\rm
on $X$. If $X$ is full with the both inner products, $X$ is an involutive $A-A$-equivalence
bimodule. For each involutive Hilbert $A-A$-bimodule, let $L_X$ be the linking $C^*$-algebra
induced by $X$ and $C_X$ the $C^*$-subalgebra of $L_X$, which is defined in \cite {KT6:involutive},
that is,
$$
C_X =\{\begin{bmatrix} a & x \\
\widetilde{x}^{\natural} & a 
\end{bmatrix}\, | \, a\in A \, , \, x\in X\} ,
$$
We note that $C_X$ acts on $X\oplus A$ (See Brown, Green and Rieffel \cite {BGR:linking}
and Rieffel \cite {Rieffel:rotation}). The norm of $C_X$ is defined as the operator norm on $X\oplus A$.

Let $A$ be a unital $C^*$-algebra and $X$ an involutive Hilbert $A-A$-bimodule.
Let $\widetilde{X}$ be its dual Hilbert $A-A$-bimodule. We define the map ${}^{\natural}$ on
$\widetilde{X}$ by $(\widetilde{x})^{\natural}=\widetilde{(x^{\natural})}$ for any $\widetilde{x}\in\widetilde{X}$.

\begin{lemma}\label{lem:involution2}With the above notation, the above map ${}^{\natural}$
is an involution on $\widetilde{X}$.
\end{lemma}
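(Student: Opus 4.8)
The plan is to verify the three defining conditions of an involution (Definition~\ref{def:involutive}) for the map $(\widetilde{x})^{\natural}=\widetilde{(x^{\natural})}$ on $\widetilde{X}$, transferring each one from the corresponding condition for $\natural$ on $X$ via the standard dictionary relating the $A$-actions and inner products on $X$ and on its dual $\widetilde{X}$. Recall that on $\widetilde{X}$ one has $a\cdot\widetilde{x}\cdot b=\widetilde{b^*\cdot x\cdot a^*}$, $\,{}_A\la\widetilde{x},\widetilde{y}\ra=\la y,x\ra_A$, and $\la\widetilde{x},\widetilde{y}\ra_A={}_A\la y,x\ra$ for $x,y\in X$ and $a,b\in A$; these are exactly the identities I will feed in.

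First I would check conjugate-linearity of $\natural$ on $\widetilde{X}$: the map $x\mapsto\widetilde{x}$ is conjugate linear and $x\mapsto x^{\natural}$ is conjugate linear, so their composite $\widetilde{x}\mapsto\widetilde{(x^{\natural})}$ is linear in the appropriate sense — in fact conjugate linear as a map on $\widetilde{X}$, since the conjugate-linear identification absorbs one of the two conjugations. Then I would verify (1): $((\widetilde{x})^{\natural})^{\natural}=(\widetilde{(x^{\natural})})^{\natural}=\widetilde{((x^{\natural})^{\natural})}=\widetilde{x}$, using property (1) for $\natural$ on $X$. For (2), I compute $(a\cdot\widetilde{x}\cdot b)^{\natural}=(\widetilde{b^*\cdot x\cdot a^*})^{\natural}=\widetilde{(b^*\cdot x\cdot a^*)^{\natural}}=\widetilde{(a\cdot x^{\natural}\cdot b)}=b^*\cdot\widetilde{(x^{\natural})}\cdot a^*=b^*\cdot(\widetilde{x})^{\natural}\cdot a^*$, where I used the action formula on $\widetilde{X}$ twice and property (2) for $\natural$ on $X$ in the middle.

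For (3), I would expand ${}_A\la\widetilde{x},(\widetilde{y})^{\natural}\ra={}_A\la\widetilde{x},\widetilde{(y^{\natural})}\ra=\la y^{\natural},x\ra_A$ and separately $\la(\widetilde{x})^{\natural},\widetilde{y}\ra_A=\la\widetilde{(x^{\natural})},\widetilde{y}\ra_A={}_A\la y,x^{\natural}\ra$; property (3) for $\natural$ on $X$ applied to the pair $y,x$ gives ${}_A\la y,x^{\natural}\ra=\la y^{\natural},x\ra_A$, so the two sides agree. None of the steps is really an obstacle — the content is entirely bookkeeping with the duality conventions — so the only thing to be careful about is being consistent about which of the two inner products on $\widetilde{X}$ corresponds to which one on $X$, and about the placement of adjoints; once those conventions are fixed the verification is the routine computation the paper alludes to. I would conclude that ${}^{\natural}$ satisfies (1)--(3) of Definition~\ref{def:involutive}, hence is an involution on $\widetilde{X}$.
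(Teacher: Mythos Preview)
Your proposal is correct and is precisely the routine verification the paper has in mind: the paper's own proof reads in its entirety ``This is immediate by direct computations,'' and you have carried out exactly those computations, checking conjugate linearity and conditions (1)--(3) of Definition~\ref{def:involutive} via the standard dual-bimodule identities. There is nothing to add; just be sure your stated conventions for the inner products on $\widetilde{X}$ match whatever you use elsewhere, since different sources swap the arguments.
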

\begin{proof}This is immediate by direct computations.
\end{proof}

For each involutive Hilbert $A-A$-bimodule $X$, we regard $\widetilde{X}$ as an involutive $A-A$-bimodule
in the same manner of Lemma \ref{lem:involution2}.

Let $\BZ_2=\BZ/2\BZ$ and $\BZ_2$ consists of the unit element $0$ and $1$.
Let $X$ be an involutive Hilbert $A-A$-bimodule. We construct a $C^*$-algebraic bundle over $\BZ_2$
induced by $X$. Let $A_0 =A$ and $A_1 =X$. Let $\mathcal{A}_X =\{A_t \}_{t\in \BZ_2}$.
We define a product $\bullet$ and an involution $\sharp$ as follows:
\newline
(1) $a\bullet b=ab$ \quad $a, b\in A$,
\newline
(2) $a\bullet x =a\cdot x$, \quad $x\bullet a =x\cdot a$ \quad $a\in A$, $x\in X$,
\newline
(3) $x\bullet y={}_A \la x, y^{\natural} \ra =\la x^{\natural}, y \ra_A$ \quad $x, y\in X$,
\newline
(4) $a^{\sharp}=a^*$ \quad $a\in A$,
\newline
(5) $x^{\sharp}=x^{\natural}$ \quad $x\in X$.
\newline
Then $A\oplus X$ is a $*$-algebra and by routine computations,
$A\oplus X$ is isomorphic to $C_X$ as $*$-algebras.
We identify $A\oplus X$ with $C_X$ as $*$-algebras. We define a norm
of $A\oplus X$ as the operator norm on $X\oplus A$. Hence
$\mathcal{A}_X$ is a $C^*$-algebraic bundle over $\BZ_2$.
Thus, we obtain a correspondence from the involutive Hilbert $A-A$-bimodules to
the $C^*$-algebraic bundles over $\BZ_2$. Next, let $\mathcal{A}=\{A_t \}_{t\in \BZ_2}$
be a $C^*$-algebraic bundle over $\BZ_2$. Then $A_1$ ia an involutive Hilbert $A-A$-bimodule.
Hence we obtain a correspondence from the $C^*$-algebraic bundles over $\BZ_2$ to
the involutive Hilbert $A-A$-bimodules. Clearly the above two correspondences are the
inverse correspondences of each other. Furthermore, the inclusion of unital $C^*$-algebras
$A\subset C_X$ induced by $X$ and the inclusion of unital $C^*$-algebras $A\subset A\oplus X$ induced by
the $C^*$-algebraic bundle $\mathcal{A}_X$ coincide.

\begin{lemma}\label{lem:iso2} Let $X$ and $Y$ be involutive Hilbert $A-A$-bimodules and
$\mathcal{A}_X$ and $\mathcal{A}_Y$ the $C^*$-algebraic bundles over $\BZ_2$ induced by
$X$ and $Y$, respectively. Then $\mathcal{A}_X \cong \mathcal{A}_Y$ as $C^*$-algebraic bundles over $\BZ_2$
if and only if $X\cong Y$ as involutive Hilbert $A-A$-bimodules.
\end{lemma}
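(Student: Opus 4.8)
The plan is to prove the equivalence of the two statements by exhibiting explicit mutually inverse maps at the level of isomorphisms, using the fact established just above the lemma that the correspondence $X \mapsto \mathcal{A}_X$ and $\mathcal{A} \mapsto A_1$ are inverse to each other. The nontrivial direction is to show that if $\Phi\colon\mathcal{A}_X\to\mathcal{A}_Y$ is an isomorphism of $C^*$-algebraic bundles over $\BZ_2$, then its degree-one component is an isomorphism of involutive Hilbert $A$–$A$-bimodules, and conversely that an involutive bimodule isomorphism induces a bundle isomorphism.

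First I would treat the ``if'' direction. Suppose $\varphi\colon X\to Y$ is an isomorphism of involutive Hilbert $A$–$A$-bimodules, i.e.\ a conjugate-bilinear-compatible bijection with $\varphi(a\cdot x\cdot b)=a\cdot\varphi(x)\cdot b$, preserving both inner products and intertwining the involutions: $\varphi(x^{\natural})=\varphi(x)^{\natural}$. Define $\Phi\colon\mathcal{A}_X=\{A,X\}\to\mathcal{A}_Y=\{A,Y\}$ by $\Phi|_{A_0}=\id_A$ and $\Phi|_{A_1}=\varphi$. I would then check directly against the five defining relations (1)--(5) of the product $\bullet$ and involution $\sharp$ that $\Phi$ is a bundle homomorphism: relation (1) is trivial, (2) is exactly $A$-bilinearity of $\varphi$, (3) $\Phi(x\bullet y)={}_A\la\varphi(x),\varphi(y)^{\natural}\ra={}_A\la\varphi(x),\varphi(y^{\natural})\ra=\varphi({}_A\la x,y^{\natural}\ra)$ using inner-product preservation and the intertwining of $\natural$, and (4)--(5) are immediate. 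Since $\Phi$ is bijective with bijective inverse $\id_A\oplus\varphi^{-1}$ (which is again a bundle homomorphism by the same argument), and since a $*$-homomorphism between $C^*$-algebras that restricts to isometries on the fibres is automatically isometric for the operator norms on $X\oplus A$ and $Y\oplus A$, $\Phi$ is an isomorphism of $C^*$-algebraic bundles.

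Next I would handle the ``only if'' direction, which is the main point. Given a $C^*$-algebraic bundle isomorphism $\Phi\colon\mathcal{A}_X\to\mathcal{A}_Y$, it respects the grading, so $\Phi(A_0)=A_0$ and $\Phi(A_1)=A_1$; write $\psi=\Phi|_{A_0}$, an automorphism of the $C^*$-algebra $A$, and $\varphi=\Phi|_{A_1}\colon X\to Y$, a linear bijection. The relations (2) give $\varphi(a\cdot x\cdot b)=\psi(a)\cdot\varphi(x)\cdot\psi(b)$, relation (5) gives $\varphi(x^{\natural})=\varphi(x)^{\natural}$, and relation (3) combined with (1) and (4) forces $\psi({}_A\la x,y^{\natural}\ra)={}_A\la\varphi(x),\varphi(y)^{\natural}\ra$ and similarly for the right inner product. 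Thus $(\psi,\varphi)$ is an isomorphism of involutive Hilbert bimodules \emph{twisted by} $\psi$. To conclude $X\cong Y$ as involutive Hilbert $A$–$A$-bimodules in the untwisted sense, I expect one needs the hypothesis that such a $\psi$ can be absorbed: the hard part will be showing that $\psi$ is inner (or that one may compose $\varphi$ with a bimodule automorphism to straighten $\psi$ to $\id_A$). If the ambient conventions in the paper take bimodule isomorphism to mean ``up to an automorphism of the coefficient algebra'', then $(\psi,\varphi)$ already witnesses $X\cong Y$ and nothing more is needed; otherwise I would invoke the corresponding normalization used earlier (the kind of argument behind $A'\cap C=\BC 1$ forcing coefficient automorphisms to be inner) to replace $\psi$ by $\id_A$. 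The remaining steps — verifying that a straightened $\varphi$ preserves the inner products on the nose and intertwines the involutions — are routine given the twisted identities above.
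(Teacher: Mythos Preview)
Your approach matches the paper's: in the ``if'' direction you set $\Phi|_{A_0}=\id_A$, $\Phi|_{A_1}=\varphi$ and verify the bundle axioms, and in the ``only if'' direction you restrict a given bundle isomorphism to its graded components. The paper's proof is only a few lines and does exactly this.

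Your analysis of the ``only if'' direction is more careful than the paper's. You correctly observe that $\psi=\Phi|_{A_0}$ is an automorphism of $A$ which need not be the identity, so that $\varphi=\Phi|_{A_1}$ is a priori only a $\psi$-twisted bimodule map. The paper disposes of this in a single phrase: ``We identify $A$ with $\pi_0(A)$.'' That is precisely your first option --- absorbing the coefficient automorphism into the identification of the degree-zero fibres --- and it is all the paper does. No appeal to inner automorphisms or to a hypothesis like $A'\cap C=\BC 1$ is made, nor could it be, since the lemma carries no such assumption. So commit to your first reading and drop the speculation about straightening $\psi$ via commutant conditions.

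One small slip in your check of relation~(3): you end the chain at $\varphi({}_A\la x,y^{\natural}\ra)$, but $x\bullet y\in A_0$ and $\Phi|_{A_0}=\id_A$, so the target should be ${}_A\la x,y^{\natural}\ra$ itself; the verification still goes through because $\varphi$ preserves the left $A$-valued inner product.
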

\begin{proof} We suppose that $\mathcal{A}_X \cong \mathcal{A}_Y$ as $C^*$-algebraic
bundles over $\BZ_2$. Then there is a $C^*$-algebraic bundle isomorphism $\{\pi_t\}_{t\in \BZ_2}$
of $\mathcal{A}_X$ onto $\mathcal{A}_Y$. We identify $A$ with $\pi_0 (A)$. Then $\pi_1 $
is an involutive Hilbert $A-A$-bimodule isomorphism of $X$ onto $Y$.
Next, we suppose that there is an involutive Hilbert $A-A$-bimodule isomorphism $\pi$ of
$X$ onto $Y$. Let $\pi_0 =\id_A$ and $\pi_1 =\pi$. Then $\{\pi_t \}_{t\in \BZ_2}$ is
a $C^*$-algebraic bundle isomorphism $\mathcal{A}_X$ onto $\mathcal{A}_Y$.
\end{proof}

\begin{lemma}\label{lem:saturated} Let $X$ be an involutive Hilbert $A-A$-bimodule
and $\mathcal{A}_X$ the $C^*$-algebraic bundle over $\BZ_2$ induced by
$X$. Then $X$ is full with the both inner products if and only if $\mathcal{A}_X$ is saturated.
\end{lemma}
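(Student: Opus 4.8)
The plan is to unwind Definition \ref{def:saturated} for the group $\BZ_2 = \{0,1\}$, where $\mathcal{A}_X = \{A_t\}_{t\in\BZ_2}$ has $A_0 = A$ and $A_1 = X$. Saturation means $\overline{A_t \bullet A_t^\sharp} = A$ for $t = 0$ and $t = 1$, with $\bullet$ and $\sharp$ the product and involution of $\mathcal{A}_X$. First I would dispose of the component $t = 0$: there $a \bullet b^\sharp = ab^*$, and since $A$ is unital $a = a \bullet 1^\sharp$ for every $a\in A$, so $A_0 \bullet A_0^\sharp = A$ and the condition is automatic (in particular no closure is needed on this component). Hence saturation of $\mathcal{A}_X$ reduces to the single requirement $\overline{A_1 \bullet A_1^\sharp} = A$.

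Next I would compute $A_1 \bullet A_1^\sharp$ in terms of the inner products on $X$. Since the involution $\natural$ is a conjugate-linear bijection of $X$, we have $A_1^\sharp = \{x^\natural : x\in X\} = X$, and for $x,y\in X$, applying condition $(3)$ of Definition \ref{def:involutive} with $y$ replaced by $y^\natural$ and using $(y^\natural)^\natural = y$ gives
$$
x \bullet y^\sharp = x \bullet y^\natural = {}_A \la x, (y^\natural)^\natural \ra = {}_A \la x, y \ra .
$$
Thus $A_1 \bullet A_1^\sharp$ is exactly the linear span of $\{{}_A\la x, y\ra : x, y\in X\}$, so $\mathcal{A}_X$ is saturated if and only if $X$ is full with its left $A$-valued inner product.

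Finally I would show that fullness of $X$ on the left is equivalent to fullness on the right, so that the statement can be phrased with both inner products as in the lemma. Using condition $(3)$ again, for all $u,v\in X$
$$
{}_A \la u, v \ra = {}_A \la u, (v^\natural)^\natural \ra = \la u^\natural, v^\natural \ra_A ;
$$
since $\natural$ maps $X$ onto $X$, as $u,v$ run over $X$ so do $u^\natural,v^\natural$, whence the linear span of the left inner products coincides with that of the right inner products. Combining the three steps yields the equivalence. I do not anticipate a real obstacle: the content is entirely in correctly identifying $A_1\bullet A_1^\sharp$ with the range of the left inner product via the involutive structure, and in the harmless remark about the unital $A_0$-component; everything else is routine bookkeeping with Definitions \ref{def:saturated} and \ref{def:involutive}.
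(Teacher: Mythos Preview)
Your proof is correct and follows essentially the same route as the paper's: both reduce saturation to the computation $A_1 \bullet A_1^\sharp = {}_A\la X, X\ra$ and observe, via the involution $\natural$ and Definition~\ref{def:involutive}(3), that left-fullness and right-fullness coincide (this is exactly the content of the paper's Remark~\ref{remark:inner}). The only cosmetic difference is that the paper also records the identities $A_0 \bullet A_1^\sharp = A_1$ and $A_1 \bullet A_0^\sharp = A_1$, which are not needed under Definition~\ref{def:saturated}; your streamlined reduction to the $t=1$ component is entirely sufficient.
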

\begin{proof}
We suppose that $X$ is full with the both inner products. Then
$$
A_1 \bullet A_1^{\sharp}={}_A \la X \, , \, X \ra=A=A_0 .
$$
Also,
\begin{align*}
A_0 \bullet A_1^{\sharp} & =A\cdot X^{\natural}=A\cdot X=X=A_1 , \\
A_1 \bullet A_0^{\sharp} & =X\cdot A^* =X\cdot A=X=A_1
\end{align*}
by \cite [Proposition1.7]{BMS:quasi}. Clearly $A_0 \bullet A_0 =AA=A=A_0$.
Hence $\mathcal{A}_X$ is saturated. Next, we suppose that $\mathcal{A}_X$ is saturated.
Then
\begin{align*}
{}_A \la X \, , \, X \ra & =A_1 \bullet A_1^{\sharp} =A_1 =A , \\
\la X \, , \, X \ra_A  & ={}_A \la X^{\natural} \, , \, X^{\natural} \ra={}_A \la X \, ,\ \, X \ra=A .
\end{align*}
Thus $X$ is full with the both inner products.
\end{proof}

\begin{remark}\label{remark:inner} Let $X$ be an involutive Hilbert $A-A$-bimodule.
Then by the above proof, we see that $X$ is full with the left $A$-valued inner product if and only if
$X$ is full with the right $A$-valued inner product.
\end{remark}

\begin{lemma}\label{lem:involution} Let $A$ and $B$ be unital $C^*$-algebras and $M$ an
$A-B$-equivalence bimodule. Let $X$ be an involutive Hilbert $A-A$-bimodule. Then $\widetilde{M}\otimes_A X\otimes_A M$ is an involutive Hilbert $B-B$-bimodule whose involution $\natural$ is defined by
$$
(\widetilde{m}\otimes x  \otimes n)^{\natural}=\widetilde{n}\otimes x^{\natural}\otimes m
$$
for any $m, n\in M$, $x\in X$.
\end{lemma}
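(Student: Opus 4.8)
The plan is to verify directly that the conjugate linear map $\natural$ defined on elementary tensors by $(\widetilde{m}\otimes x\otimes n)^{\natural}=\widetilde{n}\otimes x^{\natural}\otimes m$ extends to a well-defined involution on $\widetilde{M}\otimes_A X\otimes_A M$ in the sense of Definition \ref{def:involutive}, and that fullness is inherited. First I would recall that $\widetilde{M}\otimes_A X\otimes_A M$ carries its natural structure of a Hilbert $B-B$-bimodule: the left and right $B$-actions come from the left $B$-action on $\widetilde{M}$ and the right $B$-action on $M$ respectively, and the $B$-valued inner products are computed by collapsing the inner tensor legs, e.g.\ ${}_B \la \widetilde{m}\otimes x\otimes n \, , \, \widetilde{m'}\otimes x'\otimes n' \ra = {}_B \la \widetilde{m}\cdot {}_A \la x\cdot {}_A \la n, n'\ra_{(\cdot)} \ldots$; the precise bracketing uses the identities $\widetilde{m}\cdot a = \widetilde{a^* \cdot m}$, ${}_B \la \widetilde{m}, \widetilde{m'}\ra = \la m, m'\ra_B$ and $\la \widetilde{m}, \widetilde{m'}\ra_A = {}_A \la m, m'\ra$ for the dual module $\widetilde{M}$. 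These are exactly the kind of ``routine computations'' the paper has been invoking, so I would state them and move on.

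The core of the proof is checking the three axioms. For well-definedness, I would note that the assignment on elementary tensors is $A$-balanced in each slot when one uses the involution relations on $X$ and on $\widetilde{M}$: for instance moving an $a\in A$ across the first balancing $\widetilde{m}\cdot a\otimes x\otimes n = \widetilde{m}\otimes a\cdot x\otimes n$ is sent, under the flipped formula, to $\widetilde{n}\otimes x^{\natural}\cdot a^*\otimes m$ versus $\widetilde{n}\otimes x^{\natural}\otimes a^* \cdot m$ — wait, here one has to be careful and instead track $\widetilde{n}\otimes x^{\natural}\otimes m\cdot(\text{something})$ — so the check reduces to axiom (2) for the involution on $X$ together with $(\widetilde{n\cdot a})^{\natural}$-type manipulations; I would carry this out slot by slot. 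Axiom (1), $(\xi^{\natural})^{\natural}=\xi$, is immediate on elementary tensors since applying the flip twice restores the original order and $(x^{\natural})^{\natural}=x$. Axiom (2), $(b\cdot\xi\cdot b')^{\natural}=b'^{*}\cdot\xi^{\natural}\cdot b^{*}$, follows because the left $B$-action lives on the $\widetilde{M}$ leg and the right $B$-action on the $M$ leg, and the flip exchanges these two legs while $\natural$ on $\widetilde{M}$ turns $b\cdot\widetilde{m}$ into $\widetilde{m}\cdot b^{*}$-type expressions; again this is a direct unwinding.

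Axiom (3), the compatibility ${}_B \la \xi,\eta^{\natural}\ra = \la \xi^{\natural},\eta\ra_B$, is the step I expect to be the main obstacle, because it forces all three inner-product computations — the one on $\widetilde{M}$, the one on $X$ (where axiom (3) for $X$'s own involution enters), and the one on $M$ — to line up simultaneously after the leg-flip. The strategy there is to expand ${}_B \la \widetilde{m}\otimes x\otimes n, \widetilde{m'}\otimes (x')^{\natural}\otimes n'\ra$ using the collapse formulas, then rewrite the innermost $X$-bracket ${}_A \la \cdot\, , (x')^{\natural}\cdot(\cdot)\ra$ via axiom (3) for $X$ as a right $A$-valued bracket, and check that the result coincides term-for-term with the expansion of $\la \widetilde{n}\otimes x^{\natural}\otimes m, \widetilde{m'}\otimes x'\otimes n'\ra_B$; the bookkeeping of which $A$-coefficient sits where is delicate but mechanical once one fixes conventions. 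Finally, since $M$ is an equivalence bimodule and $X$ is full (or at least full with one inner product, which by Remark \ref{remark:inner} gives both when $X$ is an equivalence bimodule), the iterated interior tensor product $\widetilde{M}\otimes_A X\otimes_A M$ is full with both $B$-valued inner products, so it is indeed a Hilbert $B-B$-bimodule of the required type; I would close by noting this makes $\mathcal{A}_{\widetilde{M}\otimes_A X\otimes_A M}$ a $C^*$-algebraic bundle over $\BZ_2$, which is the point of the construction.
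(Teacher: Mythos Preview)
Your plan is correct and matches the paper's approach exactly: the paper's entire proof reads ``This is immediate by routine computations,'' and what you outline \emph{is} precisely those routine computations---checking well-definedness of the flip map on balanced tensors and then verifying axioms (1)--(3) of Definition~\ref{def:involutive} using the involution axioms for $X$ together with the standard dual-module identities for $\widetilde{M}$.

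One small point: the lemma as stated does not assert fullness of $\widetilde{M}\otimes_A X\otimes_A M$, nor does it assume $X$ is full, so your closing paragraph about fullness and about $\mathcal{A}_{\widetilde{M}\otimes_A X\otimes_A M}$ being a bundle over $\BZ_2$ is extraneous here (though harmless, and indeed relevant later in the paper). The conclusion required is only that the tensor product is an involutive Hilbert $B$--$B$-bimodule, which your verification of (1)--(3) already delivers.
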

\begin{proof}
This is immediate by routine computations.
\end{proof}

Let $A, B, X$ and $M$ be as in Lemma \ref {lem:involution}. Let $Y$ be an involutive Hilbert $B-B$-bimodule.
We suppose that there is an involutive Hilbert $B-B$-bimodule isomorphism $\Phi$ of
$\widetilde{M}\otimes_A X\otimes_A M$ onto $Y$. 
Let $\widetilde{\Phi}$ be the linear map from $\widetilde{M}\otimes_A \widetilde{X}\otimes_A M$ onto $\widetilde{Y}$
defined by
$$
\widetilde{\Phi}(\widetilde{m}\otimes \widetilde{x}\otimes n)
=\widetilde{\Phi}((\widetilde{n}\otimes x\otimes m )^{\widetilde{}})
=[\Phi(\widetilde{n}\otimes x\otimes m)]^{\widetilde{}}
$$
for any $m, n \in M$, $x\in X$.

\begin{lemma}\label{lem:conjugate} With the above notation, $\widetilde{\Phi}$ is an
involutive Hilbert $B-B$-bimodule isomorphism of $\widetilde{M}\otimes_A \widetilde{X}\otimes_A M$ onto
$\widetilde{Y}$.
\end{lemma}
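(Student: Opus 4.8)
The plan is to verify directly that $\widetilde{\Phi}$ is well-defined, bijective, isometric, $B$–$B$-bilinear, and compatible with the involution $\natural$; most of this rides on the corresponding properties of $\Phi$ together with Lemmas~\ref{lem:tilde} and~\ref{lem:involution}. First I would check that $\widetilde{\Phi}$ is well-defined: since $\Phi$ is an involutive Hilbert $B$–$B$-bimodule isomorphism of $\widetilde{M}\otimes_A X\otimes_A M$ onto $Y$, it carries the involution $\natural$ on the source (computed via Lemma~\ref{lem:involution}) to the involution on $Y$, so $\Phi(\widetilde{n}\otimes x\otimes m)$ depends only on the element $\widetilde{m}\otimes\widetilde{x}\otimes n = (\widetilde{n}\otimes x\otimes m)^{\widetilde{\phantom{x}}}$ of $\widetilde{M}\otimes_A\widetilde{X}\otimes_A M$, using the identification of $\widetilde{M}\otimes_A\widetilde{X}\otimes_A M$ with $\widetilde{\widetilde{M}\otimes_A X\otimes_A M}$ afforded by Lemma~\ref{lem:tilde}. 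Taking the dual-module element $[\,\cdot\,]^{\widetilde{\phantom{x}}}$ of an element of $Y$ lands in $\widetilde{Y}$, so $\widetilde{\Phi}$ maps into $\widetilde{Y}$ as claimed.

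Next I would establish that $\widetilde{\Phi}$ is a $B$–$B$-bimodule map preserving both inner products. For the module actions, recall that on a dual module the left $B$-action is $b\cdot\widetilde{y}=\widetilde{y\cdot b^*}$ and the right $B$-action is $\widetilde{y}\cdot b=\widetilde{b^*\cdot y}$; transporting these through $\Phi$, which is $B$–$B$-linear, yields the corresponding equivariance of $\widetilde{\Phi}$. For the inner products, one uses that passing to the dual swaps the two $B$-valued inner products (${}_B\la\widetilde{y},\widetilde{z}\ra=\la y,z\ra_B$ and $\la\widetilde{y},\widetilde{z}\ra_B={}_B\la y,z\ra$) together with the fact that $\Phi$ preserves ${}_B\la\cdot,\cdot\ra$ and $\la\cdot,\cdot\ra_B$; hence $\widetilde{\Phi}$ preserves both. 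In particular $\widetilde{\Phi}$ is isometric, and since $\Phi$ is surjective and $[\,\cdot\,]^{\widetilde{\phantom{x}}}$ is a bijection of $Y$ onto $\widetilde{Y}$, $\widetilde{\Phi}$ is surjective as well, hence an isomorphism of Hilbert $B$–$B$-bimodules.

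Finally I would verify compatibility with the involutions: with the involution on $\widetilde{M}\otimes_A\widetilde{X}\otimes_A M$ given by $(\widetilde{m}\otimes\widetilde{x}\otimes n)^{\natural}=\widetilde{n}\otimes\widetilde{x}^{\natural}\otimes m$ (Lemma~\ref{lem:involution} applied to $\widetilde{X}$, using Lemma~\ref{lem:involution2}), and the involution on $\widetilde{Y}$ given by $(\widetilde{y})^{\natural}=\widetilde{(y^{\natural})}$ (Lemma~\ref{lem:involution2}), one computes
$$
\widetilde{\Phi}((\widetilde{m}\otimes\widetilde{x}\otimes n)^{\natural})
=\widetilde{\Phi}(\widetilde{n}\otimes\widetilde{x}^{\natural}\otimes m)
=[\Phi(\widetilde{m}\otimes x^{\natural}\otimes n)]^{\widetilde{\phantom{x}}}
=[\Phi((\widetilde{n}\otimes x\otimes m)^{\natural})]^{\widetilde{\phantom{x}}},
$$
and since $\Phi$ intertwines the involutions this equals $[\Phi(\widetilde{n}\otimes x\otimes m)^{\natural}]^{\widetilde{\phantom{x}}}=([\Phi(\widetilde{n}\otimes x\otimes m)]^{\widetilde{\phantom{x}}})^{\natural}=(\widetilde{\Phi}(\widetilde{m}\otimes\widetilde{x}\otimes n))^{\natural}$, as required. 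The main obstacle is bookkeeping rather than conceptual: one must be careful that all the identifications — $\widetilde{X\otimes_A Y}\cong\widetilde{Y}\otimes_A\widetilde{X}$ from Lemma~\ref{lem:tilde}, the double-dual identification $\widetilde{\widetilde{N}}\cong N$, and the various swaps of left/right structures on dual modules — are applied consistently, so that the formula defining $\widetilde{\Phi}$ is genuinely independent of the representation $\widetilde{m}\otimes\widetilde{x}\otimes n$ of a given element; once these identifications are pinned down the verifications are routine computations.
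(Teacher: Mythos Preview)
Your proposal is correct and follows exactly the route the paper has in mind: the paper's own proof reads ``This is immediate by direct computations,'' and what you have written is precisely a careful unpacking of those computations (well-definedness via the identification of Lemma~\ref{lem:tilde}, bilinearity and inner-product preservation via the dual-module swap of left/right structures, and compatibility with $\natural$ via Lemma~\ref{lem:involution2}). One small remark: in your well-definedness paragraph the involutive property of $\Phi$ is not actually needed---the point is purely that $\widetilde{m}\otimes\widetilde{x}\otimes n$ determines $\widetilde{n}\otimes x\otimes m$ under the identification of Lemma~\ref{lem:tilde}, which you do state---but this does not affect the validity of the argument.
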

\begin{proof} This is immediate by routine computations
\end{proof}

Again, let $A, B, X$ and $M$ be as in Lemma \ref {lem:involution}. Let $Y$ be an involutive Hilbert $B-B$-bimodule.
We suppose that there is an involutive Hilbert $B-B$-bimodule isomorphism $\Phi$ of
$\widetilde{M}\otimes_A X\otimes_A M$ onto $Y$. 
Then there is a finite subset $\{u_i \}$ of $M$ with
$\sum_i {}_A \la u_i \, , \, u_i \ra =1$. We identify $A$ and $X$ with $M\otimes_B \widetilde{M}$
and $A\otimes_A X$ by the isomorphisms defined by
\begin{align*}
m\otimes n \in M\otimes_B \widetilde{M} &  \mapsto {}_A \la m \, , \, n \ra \in A \, , \\
a\otimes x\in A\otimes_A X &  \mapsto a\cdot x\in X .
\end{align*}
Let $x\in X$, $m\in M$. For any $x\otimes m\in X\otimes_A M$,
$$
x\otimes m =1_A \cdot x\otimes m =\sum_i {}_A \la u_i \, , \, u_i \ra \cdot x\otimes m
=\sum_i u_i \otimes \widetilde{u_i}\otimes x\otimes m .
$$
Hence there is a linear map $\Psi$ from $X\otimes_A M$ to $M\otimes_B Y$ defined by
$$
\Psi(x\otimes m)=\sum_i u_i \otimes \Phi(\widetilde{u_i }\otimes x \otimes m)
$$
for any $x\in X$, $m\in M$. By the definition of $\Psi$, we can see that $\Psi$ is a Hilbert
$A-B$-bimodule isomorphism of $X\otimes_A M$ onto $M\otimes_B Y$.

\begin{lemma}\label{lem:unique} With the above notation, the Hilbert $A-B$-bimodule
isomorphism $\Psi$ of $X\otimes_A M$ onto $M\otimes_B Y$ is independent of the choice
of a finite subset $\{u_i \}$ of $M$ with $\sum_i {}_A \la u_i  \, , \, u_i \ra =1$.
\end{lemma}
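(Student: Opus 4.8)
The plan is to take two finite subsets $\{u_i\}$ and $\{v_j\}$ of $M$ with $\sum_i {}_A \la u_i , u_i \ra =1$ and $\sum_j {}_A \la v_j , v_j \ra =1$, write $\Psi$ and $\Psi'$ for the corresponding Hilbert $A-B$-bimodule isomorphisms of $X\otimes_A M$ onto $M\otimes_B Y$, that is,
$$
\Psi(x\otimes m)=\sum_i u_i \otimes \Phi(\widetilde{u_i}\otimes x\otimes m) , \qquad
\Psi'(x\otimes m)=\sum_j v_j \otimes \Phi(\widetilde{v_j}\otimes x\otimes m) ,
$$
and then to show that $\Psi(x\otimes m)=\Psi'(x\otimes m)$ for all $x\in X$, $m\in M$; since the elementary tensors span $X\otimes_A M$, this gives $\Psi=\Psi'$.

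First I would invoke the reconstruction formula for the $A-B$-equivalence bimodule $M$: from $\sum_j {}_A \la v_j , v_j \ra =1$ and the imprimitivity condition ${}_A \la p , q \ra \cdot r=p\cdot \la q , r \ra_B$ one obtains $u_i=\sum_j {}_A \la v_j , v_j \ra \cdot u_i=\sum_j v_j \cdot \la v_j , u_i \ra_B$ for each $i$. Substituting this into the formula for $\Psi(x\otimes m)$, moving each coefficient $\la v_j , u_i \ra_B\in B$ across the balancing over $B$ in $M\otimes_B Y$ so that it acts on $Y$, and then pulling it inside $\Phi$ (a Hilbert $B-B$-bimodule isomorphism), one arrives at
$$
\Psi(x\otimes m)=\sum_{i, j}v_j \otimes \Phi\bigl((\la v_j , u_i \ra_B \cdot \widetilde{u_i})\otimes x\otimes m\bigr) .
$$

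The crux is then the identity $\sum_i \la v_j , u_i \ra_B \cdot \widetilde{u_i}=\widetilde{v_j}$. Using the definition of the left $B$-action on the dual bimodule, $\la v_j , u_i \ra_B \cdot \widetilde{u_i}=\widetilde{u_i \cdot \la v_j , u_i \ra_B^{\,*}}=\widetilde{u_i \cdot \la u_i , v_j \ra_B}$, and summing over $i$ while using additivity of $m\mapsto\widetilde{m}$ together with the other reconstruction identity $\sum_i u_i \cdot \la u_i , v_j \ra_B=\bigl(\sum_i {}_A \la u_i , u_i \ra\bigr)\cdot v_j=v_j$ gives the claim. Plugging it back in yields $\Psi(x\otimes m)=\sum_j v_j \otimes \Phi(\widetilde{v_j}\otimes x\otimes m)=\Psi'(x\otimes m)$, completing the argument.

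The whole computation is routine; the only thing to watch is the bookkeeping of left versus right module actions and the conjugate-linear conventions for $\widetilde{M}$ (in particular $b\cdot\widetilde{m}=\widetilde{m\cdot b^{*}}$ for $b\in B$, $m\in M$), so I do not expect any genuine obstacle — merely care in the transpositions between $M\otimes_B Y$, $\widetilde{M}\otimes_A X\otimes_A M$, and the two inner products.
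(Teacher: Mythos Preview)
Your proposal is correct and follows essentially the same computation as the paper's proof: insert the frame identity $\sum_j {}_A\la v_j,v_j\ra=1$ on the $u_i$'s, push the resulting $B$-coefficient $\la v_j,u_i\ra_B$ across $\otimes_B$ and inside $\Phi$, rewrite $\la v_j,u_i\ra_B\cdot\widetilde{u_i}=[u_i\cdot\la u_i,v_j\ra_B]^{\widetilde{}}$, and then collapse the $i$-sum using $\sum_i {}_A\la u_i,u_i\ra=1$. The only cosmetic difference is that the paper writes the chain in one displayed calculation rather than splitting off the identity $\sum_i \la v_j,u_i\ra_B\cdot\widetilde{u_i}=\widetilde{v_j}$ as a separate step.
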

\begin{proof}
Let $\{v_j \}$ be another finite subset of $M$ with $\sum_j {}_A \la v_j \, , \, v_j \ra=1$.
Then for any $x\in X$, $m\in M$,
\begin{align*}
\sum_i u_i \otimes \Phi(\widetilde{u_i}\otimes x\otimes m ) & =
\sum_{i, j}{}_A \la v_j \, , \, v_j \ra \cdot u_i \otimes \Phi(\widetilde{u_i }\otimes x \otimes m ) \\
& =\sum_{i, j}v_j \cdot \la v_j \, , \, u_i \ra_B \otimes \Phi(\widetilde{u_i}\otimes x \otimes m ) \\
& =\sum_{i, j}v_j \otimes \Phi([u_i \cdot \la u_i \, , \, v_j \ra_B ]^{\widetilde{}}\otimes x\otimes m) \\
& =\sum_j v_j \otimes\Phi(\widetilde{v_j}\otimes x\otimes m ) .
\end{align*}
Therefore, we obtain the conclusion.
\end{proof}

Similarly let $\widetilde{\Psi}$ be the Hilbert $A-B$-bimodule isomorphism of $\widetilde{X}\otimes_A M$
onto $M\otimes_B \widetilde{Y}$ defined by
$$
\widetilde{\Psi}(\widetilde{x}\otimes m)
=\sum_i u_i \otimes \widetilde{\Phi}(\widetilde{u_i}\otimes\widetilde{x}\otimes m)
$$
for any $x\in X$, $m\in M$. We compute the inverse map of $\Psi$, which is a Hilbert
$A-B$-bimodule isomorphism of $M\otimes_B Y$ onto $X\otimes_A M$.
Let $\Theta$ be the linear map from $M\otimes_B Y$ to $X\otimes_A M$ defined by
$$
\Theta(m\otimes y)=m\otimes\Phi^{-1}(y)
$$
for any $m\in M$, $y\in Y$, where we identify $M\otimes_B \widetilde{M}\otimes_A X\otimes_A M$ with
$X\otimes_A M$ as Hilbert $A-B$-bimodules by the map
$$
m\otimes\widetilde{n}\otimes x\otimes m_1 \in M\otimes_B \widetilde{M}\otimes_A X\otimes_A M
\mapsto {}_A \la m, n \ra \cdot x \otimes m_1\in X\otimes_A M .
$$

\begin{lemma}\label{lem:inverse} With the above notation, $\Theta$ is a Hilbert $A-B$-bimodule isomorphism
of $M\otimes_B Y$ onot $X\otimes_A M$ such that 
$\Theta\circ\Psi=\id_{X\otimes_A M}$ and $\Psi\circ\Theta =\id_{M \otimes_B Y}$.
\end{lemma}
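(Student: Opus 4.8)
The plan is to realize $\Psi$ and $\Theta$ as composites of the canonical module identifications with $\id_M\otimes\Phi$ and $\id_M\otimes\Phi^{-1}$, so that the two required identities collapse to $\Phi^{-1}\circ\Phi=\id$, $\Phi\circ\Phi^{-1}=\id$ together with the statement that the identification of $M\otimes_B\widetilde{M}\otimes_A X\otimes_A M$ with $X\otimes_A M$ used in the definitions of $\Psi$ and $\Theta$ is ``inverted'' by the obvious map built from $\{u_i\}$. Since $\Psi$ has already been shown to be a Hilbert $A-B$-bimodule isomorphism of $X\otimes_A M$ onto $M\otimes_B Y$, once $\Theta\circ\Psi=\id$ and $\Psi\circ\Theta=\id$ are established it follows immediately that $\Theta=\Psi^{-1}$ is a Hilbert $A-B$-bimodule isomorphism of $M\otimes_B Y$ onto $X\otimes_A M$.

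First I would fix notation: write $\rho$ for the identification $M\otimes_B\widetilde{M}\otimes_A X\otimes_A M\to X\otimes_A M$, $\rho(m\otimes\widetilde{n}\otimes x\otimes m_1)={}_A\la m,n\ra\cdot x\otimes m_1$, which is a Hilbert $A-B$-bimodule isomorphism by hypothesis. Because $\Phi$ and $\Phi^{-1}$ are $B-B$-bimodule maps, the maps $\id_M\otimes\Phi\colon M\otimes_B\widetilde{M}\otimes_A X\otimes_A M\to M\otimes_B Y$ and $\id_M\otimes\Phi^{-1}\colon M\otimes_B Y\to M\otimes_B\widetilde{M}\otimes_A X\otimes_A M$ are well defined Hilbert $A-B$-bimodule maps, and by the very definition of $\Theta$ one has $\Theta=\rho\circ(\id_M\otimes\Phi^{-1})$. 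Next I would introduce $\sigma\colon X\otimes_A M\to M\otimes_B\widetilde{M}\otimes_A X\otimes_A M$, $\sigma(x\otimes m)=\sum_i u_i\otimes\widetilde{u_i}\otimes x\otimes m$; one checks that $(x,m)\mapsto\sum_i u_i\otimes\widetilde{u_i}\otimes x\otimes m$ is $A$-balanced, so $\sigma$ is a well defined $A-B$-bimodule map, and then $\Psi=(\id_M\otimes\Phi)\circ\sigma$ directly from the formula defining $\Psi$.

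The only substantive computation is $\rho\circ\sigma=\id_{X\otimes_A M}$, which is immediate from $\sum_i{}_A\la u_i,u_i\ra=1$: for $x\in X$, $m\in M$ one has $\rho(\sigma(x\otimes m))=\sum_i{}_A\la u_i,u_i\ra\cdot x\otimes m=x\otimes m$. As $\rho$ is invertible this forces $\sigma=\rho^{-1}$, hence also $\sigma\circ\rho=\id$ (alternatively $\sigma\circ\rho=\id$ can be verified by hand, using that $\sum_i u_i\otimes\widetilde{u_i}$ corresponds to $1$ under $M\otimes_B\widetilde{M}\cong A$ and the identity $\widetilde{u_i}\cdot{}_A\la m,n\ra=\widetilde{{}_A\la n,m\ra\cdot u_i}$). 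Combining, $\Theta\circ\Psi=\rho\circ(\id_M\otimes\Phi^{-1})\circ(\id_M\otimes\Phi)\circ\sigma=\rho\circ(\id_M\otimes(\Phi^{-1}\circ\Phi))\circ\sigma=\rho\circ\sigma=\id_{X\otimes_A M}$ and $\Psi\circ\Theta=(\id_M\otimes\Phi)\circ\sigma\circ\rho\circ(\id_M\otimes\Phi^{-1})=(\id_M\otimes\Phi)\circ(\id_M\otimes\Phi^{-1})=\id_{M\otimes_B Y}$; then $\Theta=\Psi^{-1}$ is a Hilbert $A-B$-bimodule isomorphism. I expect the main obstacle to be purely bookkeeping: verifying that $\id_M\otimes\Phi$, $\id_M\otimes\Phi^{-1}$ and $\sigma$ are well defined on the relevant balanced tensor products and preserve both inner products, and that $\Theta$ as written in the text literally agrees with $\rho\circ(\id_M\otimes\Phi^{-1})$ after unwinding the identification $M\otimes_B\widetilde{M}\otimes_A X\otimes_A M\cong X\otimes_A M$.
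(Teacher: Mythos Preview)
Your argument is correct and, once the factorizations are in place, shorter than the paper's.  The difference is one of style rather than substance.  The paper verifies directly that $\Theta$ preserves both the left $A$-valued and the right $B$-valued inner products, then computes $\Theta\circ\Psi=\id_{X\otimes_A M}$ elementwise (this is exactly your identity $\rho\circ\sigma=\id$ in disguise), and finally deduces $\Psi\circ\Theta=\id_{M\otimes_B Y}$ from the surjectivity of $\Psi$ via $\Psi\circ\Theta\circ\Psi=\Psi$, appealing to \cite[Definition 1.1.18]{JT:KK}.  You instead factor $\Psi=(\id_M\otimes\Phi)\circ\sigma$ and $\Theta=\rho\circ(\id_M\otimes\Phi^{-1})$ and use that $\rho$ is already known to be a Hilbert bimodule isomorphism (so $\rho\circ\sigma=\id$ forces $\sigma=\rho^{-1}$); both identities $\Theta\circ\Psi=\id$ and $\Psi\circ\Theta=\id$ then drop out, and the inner-product preservation of $\Theta$ is inherited from $\Theta=\Psi^{-1}$ rather than checked by hand.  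Your approach buys you not having to compute inner products at all; the paper's approach avoids the bookkeeping of verifying that $\sigma$ and $\id_M\otimes\Phi^{\pm1}$ are well defined on balanced tensor products.  Both are fine.
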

\begin{proof} Let $m, m_1 \in M$, $y, y_1 \in Y$. Then
\begin{align*}
{}_A \la \Theta(m\otimes y) \, , \, \Theta(m_1 \otimes y_1 ) \ra & ={}_A \la m\otimes \Phi^{-1}(y) \, , \,
m_1 \otimes \Phi^{-1}(y_1 ) \ra \\
& = {}_A \la m\cdot {}_B \la \Phi^{-1}(y) \, , \, \Phi^{-1}(y_1  ) \ra , m_1 \ra \\
& ={}_A \la m\cdot {}_B \la y, y_1 \ra, m_1 \ra
={}_A \la m\otimes y \, , \, m_1 \otimes y_1 \ra .
\end{align*}
Hence $\Theta$ preserves the left $A$-valued inner products. Also,
\begin{align*}
\la \Theta(m\otimes y) \, ,\, \Theta(m_1 \otimes y_1 ) \ra_B & =\la m\otimes\Phi^{-1}(y) \, , \,
m_1 \otimes\Phi^{-1}(y_1 ) \ra_B \\
& =\la m, \, \la \Phi^{-1}(y) \, , \, \Phi^{-1}(y_1 ) \ra_B \cdot m_1 \ra_B \\
& =\la m \, , \, \la y , y_1 \ra_B \cdot m_1 \ra =\la m\otimes y \, , \, m_1 \otimes y_1 \ra_B .
\end{align*}
Hence $\Theta$ preserves the right $B$-valued inner products.
Furthermore, for any $x\in X$, $m\in M$,
\begin{align*}
(\Theta\circ\Psi)(x\otimes m) & =\sum_i \Theta(u_i \otimes\Phi(\widetilde{u_i}\otimes x \otimes m)) \\
& =\sum_i u_i \otimes \widetilde{u_i}\otimes x\otimes m \\
& =\sum_i {}_A \la u_i , u_i \ra\cdot x\otimes m=x\otimes m
\end{align*}
since we identify $M\otimes \widetilde{M}$ with $A$ as $A-A$-equivalence bimodules by the map
$m\otimes\widetilde{n}\in M\otimes_B \widetilde{M}\mapsto {}_A \la m, n \ra\in A$.
Hence $\Theta\circ\Psi=\id_{X\otimes_A M}$. Hence $\Psi\circ\Theta\circ\Psi=\Psi$ on $X\otimes_A M$.
Since $\Psi$ is surjective, $\Psi\circ\Theta=\id_{M\otimes_B Y}$. Therefore, by the remark after
\cite [Definition 1.1.18]{JT:KK}, $\Theta$ is a Hilbert $A-B$-bimodule isomorphism of $M\otimes_B Y$ onto
$X\otimes_A M$ such that $\Theta\circ\Psi=\id_{X\otimes_A M}$ and $\Psi\circ\Theta=\id_{M\otimes_B Y}$.
\end{proof}

Similarly, we see that the inverse map of $(\widetilde{\Psi})^{-1}$ is defined by
$$
(\widetilde{\Psi})^{-1}(m\otimes \widetilde{y})=m\otimes(\widetilde{\Phi})^{-1}(\widetilde{y})
$$
for any $m\in M$, $y\in Y$, where we identify $M\otimes_B \widetilde{M}\otimes_A \widetilde{X}\otimes_A M$
with $\widetilde{X}\otimes_A M$ as Hilbert $A-B$-bimodules by the map
$$
m\otimes\widetilde{n}\otimes\widetilde{x}\otimes m_1\in M\otimes_B
\widetilde{M}\otimes_A \widetilde{X}\otimes_A M
\mapsto {}_A \la m, n \ra\cdot \widetilde{x}\otimes m_1 \in \widetilde{X}\otimes_A M .
$$

We prepare some lemmas in order to show Proposition \ref{prop:condition}.

\begin{lemma}\label{lem:one-two} Let $A$ and $B$ be unital $C^*$-algebras.
Let $X$ and $Y$ be an involutive Hilbert $A-A$-bimodule and an involutive
Hilbert $B-B$-bimodule, respectively. Let $\mathcal{A}_X =\{A_t \}_{t\in \BZ_2}$ and
$\mathcal{A}_Y =\{B_t \}_{t\in \BZ_2}$ be $C^*$-algebraic bundles over $\BZ_2$
induced by $X$ and $Y$, respectively. We suppose that there is an $\mathcal{A}_X -\mathcal{A}_Y$-equivalence bundle $\mathcal{M}=\{M_t \}_{t\in \BZ_2}$
over $\BZ_2$ such that
$$
{}_{C} \la M_t \, , \, M_s \ra =A_{ts^{-1}} , \quad \la M_t \, , \, M_s \ra_{D}=B_{t^{-1}s}
$$
for any $t, s\in \BZ_2$, where $C=A\oplus X$ and $D=B\oplus Y$. Then
there is an $A-B$-equivalence bimodule $M$ such that
$Y\cong \widetilde{M}\otimes_A X\otimes_A M$
as involutive Hilbert $B-B$-bimodules.
\end{lemma}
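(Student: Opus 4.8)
The plan is to take $M:=M_0$, the fibre of $\mathcal{M}$ over the identity $e=0$ of $\BZ_2$, and to produce an explicit involutive Hilbert $B-B$-bimodule isomorphism $\Phi\colon\widetilde{M}\otimes_A X\otimes_A M\to Y$. Write $C=A\oplus X=C_X$ and $D=B\oplus Y=C_Y$, so that $N:=\oplus_{t\in\BZ_2}M_t=M_0\oplus M_1$ is a $C-D$-equivalence bimodule. First I would note, exactly as in Section~\ref{sec:bundle}, that $M_0$ is an $A-B$-equivalence bimodule: it is a closed subspace of $N$ with the restricted actions, and by hypothesis at $(t,s)=(0,0)$ one has ${}_C\la M_0,M_0\ra=A_0=A$ and $\la M_0,M_0\ra_D=B_0=B$. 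Since $B$ is unital and $\la M_0,M_0\ra_D=B$, there is a finite subset $\{n_j\}\subset M_0$ with $\sum_j\la n_j,n_j\ra_D=1_B=1_D$.

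Using this I would show $M_1=\sum_j A_1\cdot M_0$, where $A_1=X$ acts on $M_0$ through the left $C$-action on $N$ (so $A_1\cdot M_0\subseteq M_1$): for $p\in M_1$,
\[
p=p\cdot 1_D=\sum_j p\cdot\la n_j,n_j\ra_D=\sum_j {}_C\la p,n_j\ra\cdot n_j ,
\]
and ${}_C\la p,n_j\ra\in{}_C\la M_1,M_0\ra\subseteq A_1$, so each summand lies in $A_1\cdot M_0$. Next I would define $\Phi$ on elementary tensors by $\Phi(\widetilde{m}\otimes x\otimes n)=\la m,\,x\cdot n\ra_D$, which makes sense because $x\cdot n\in A_1\cdot M_0\subseteq M_1$, hence $\la m,x\cdot n\ra_D\in\la M_0,M_1\ra_D=B_1=Y$; here the $A$-bimodule structure on the middle factor $X$ is the one it carries as the fibre $A_1$ of $\mathcal{A}_X$. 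Using associativity of the bundle module structure together with the adjoint relation $\la c\cdot\xi,\eta\ra_D=\la\xi,c^*\cdot\eta\ra_D$, one checks that $\Phi$ is balanced over $A$ in both tensor slots and is a homomorphism of $B-B$-bimodules; it is surjective because its image is $\sum_j\la M_0,\,A_1\cdot M_0\ra_D=\la M_0,M_1\ra_D=Y$. It also intertwines the involutions: by Lemma~\ref{lem:involution} the involution on $\widetilde{M}\otimes_A X\otimes_A M$ sends $\widetilde{m}\otimes x\otimes n$ to $\widetilde{n}\otimes x^{\natural}\otimes m$, and since the involution $\natural$ on $Y\subseteq D$ is the restriction of the $C^*$-involution of $D$ (so $x^{\natural}=x^{*}$ for $x\in A_1$ and $\la\xi,\eta\ra_D^{\,*}=\la\eta,\xi\ra_D$),
\[
\Phi(\widetilde{n}\otimes x^{\natural}\otimes m)=\la n,\,x^{\natural}\cdot m\ra_D=\la x\cdot n,\,m\ra_D=\la m,\,x\cdot n\ra_D^{\,\natural}=\Phi(\widetilde{m}\otimes x\otimes n)^{\natural}.
\]

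It remains to show that $\Phi$ preserves both $B$-valued inner products; this forces $\Phi$ to be injective, hence an involutive Hilbert $B-B$-bimodule isomorphism, and then $Y\cong\widetilde{M}\otimes_A X\otimes_A M$ with $M=M_0$, which is the assertion. For this I would expand the left and right $B$-valued inner products on $\widetilde{M}\otimes_A X\otimes_A M$ as iterated interior tensor products (using ${}_B\la\widetilde{m},\widetilde{m'}\ra=\la m,m'\ra_B$ and $\la\widetilde{m},\widetilde{m'}\ra_A={}_A\la m,m'\ra$ for the dual bimodule $\widetilde{M}$), and match them term by term with $\la\la m,x\cdot n\ra_D,\la m',x'\cdot n'\ra_D\ra_B$ and ${}_B\la\la m,x\cdot n\ra_D,\la m',x'\cdot n'\ra_D\ra$ computed from the involutive Hilbert $B-B$-bimodule structure of $Y$ (for which ${}_B\la y,y'\ra=y\bullet(y')^{\natural}$ and $\la y,y'\ra_B=y^{\natural}\bullet y'$ inside $D=C_Y$). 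The reconciliation of these two a priori different descriptions of the $B$-valued inner products — carried out by repeated use of the equivalence-bundle identities ${}_C\la\xi,\eta\ra\cdot\zeta=\xi\cdot\la\eta,\zeta\ra_D$, associativity of the module actions, $x^{\natural}=x^{*}$, and the strong fullness conditions on $\mathcal{M}$ — is the real content of the argument; I expect this to be the main obstacle, while the rest is bookkeeping with the $\BZ_2$-grading. (Equivalently, one may package the outcome as a bundle isomorphism $\mathcal{A}_Y\cong\mathcal{A}_{\widetilde{M}\otimes_A X\otimes_A M}$ and invoke Lemma~\ref{lem:iso2}, but this does not change the computational core.)
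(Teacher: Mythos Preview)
Your proposal is correct and follows essentially the same route as the paper: take $M=M_0$, define $\Phi(\widetilde{m}\otimes x\otimes n)=\la m,\,x\cdot n\ra_D$, check that it lands in $Y=B_1$, is surjective, preserves both $B$-valued inner products, and intertwines the involutions. The only cosmetic difference is that you establish $M_1=A_1\cdot M_0$ via a right $B$-basis $\{n_j\}$, whereas the paper writes the chain $A_1\cdot M_0={}_C\la M_1,M_0\ra\cdot M_0=M_1\cdot\la M_0,M_0\ra_D=M_1\cdot B=M_1$ directly; and the paper carries out in full the inner-product verifications that you (correctly) identify as the computational core and leave as a sketch.
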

\begin{proof} By the assumptions, $M_0$ is an $A-B$-equivalence bimodule.
Let $M=M_0$. Then by Lemma \ref {lem:involution}, $\widetilde{M}\otimes_A X\otimes_A M$ is
an involutive Hilbert $B-B$-bimodule whose involution is defined by
$(\widetilde{m}\otimes x\otimes n)^{\natural}=\widetilde{n}\otimes x^{\natural}\otimes m$
for any $m, n\in M$, $x\in X$. We show that $Y\cong\widetilde{M}\otimes_A X\otimes_A M$
as involutive Hilbert $B-B$-bimodules. Let $\Phi$ be the map from $\widetilde{M}\otimes_A X\otimes_A M$
to $Y$ defined by
$$
\Phi(\widetilde{m}\otimes x\otimes n) =\la m \, , \, x\cdot n \ra_D
$$
for any $m, n\in M$, $x\in X$. Since $A_1 =X$ and $M=M_0$, $X\cdot M_0 \subset M_1$.
And $\la M_0 \, , \, M_1 \ra_D \in B_1 =Y$. Hence $\Phi$ is a map from
$\widetilde{M}\otimes_A X \otimes_A M$ to $Y$. Clearly, $\Phi$ is a linear and $B-B$-bimodule map.
We show that $\Phi$ is surjective. Indeed,
$$
X\cdot M =A_1 \cdot M_0 ={}_C \la M_1 \, , \, M_0 \ra \cdot M_0
= M_1 \cdot \la M_0 \, , \, M_0 \ra_D 
=M_1 \cdot B =M_1
$$
by \cite [Proposition 1.7]{BMS:quasi}. Hence
$\la M \, , \, X\cdot M \ra_D =\la M \, , \, M_1 \ra_D =Y$.
Thus, $\Phi$ is surjective. Let $m, n, m_1 , n_1 \in M$, $x, x_1 \in X$. Then
\begin{align*}
{}_B \la \widetilde{m}\otimes x\otimes n \, , \, \widetilde{m_1}\otimes x_1 \otimes n_1 \ra 
& = {}_B \la \widetilde{m} \cdot {}_A \la x\otimes n \, , \, x_1 \otimes n_1 \ra \, , \, \widetilde{m_1} \ra \\
& ={}_B \la [\, {}_A \la x_1 \otimes n_1 \, , \, x\otimes n \ra \cdot m]^{\widetilde{}} \, , \, \widetilde{m_1} \ra \\
& =\la \, {}_A \la x_1 \otimes n_1 \, , \, x\otimes n \ra \cdot m \, , \, m_1 \ra_B \\
& =\la \, {}_A \la x_1 \cdot {}_A \la n_1 \, , \, n \ra  \, , \, x \ra \cdot m \, , \, m_1 \ra_B \\
& =\la [(x_1 \bullet \, {}_C \la n_1 \, , \, n \ra)\bullet x^{\natural}]\cdot m\, , \, m_1 \ra_B \\
& =\la [\, {}_C \la x_1 \cdot n_1 \, , \, n \ra \bullet x^{\natural}]\cdot m \, , \, m_1 \ra_B \\
& =\la \, {}_C \la [x_1 \cdot n_1 ] \, , \, n \ra \cdot [x^{\natural} \cdot m]\, , \, m_1 \ra_B \\
& =\la [x_1 \cdot n_1 ] \cdot \la n \, , \, x^{\natural}\cdot m \ra_D \, , \, m_1 \ra_B \\
& =\la x^{\natural}\cdot m \, , \, n \ra_D \bullet \la x_1 \cdot n_1 \, , \, m_1 \ra_D \\
& =\la m \, , \, x\cdot n \ra_D \bullet \la m_1 \, , \, x_1 \cdot n_1 \ra_D^{\sharp} \\
& ={}_B \la \la m\, , \, x\cdot n \ra_D \, , \, \la m_1 \, , \, x_1 \cdot n_1 \ra_D \ra \\
& ={}_B \la \Phi(\widetilde{m}\otimes x\otimes n) \, , \, \Phi(\widetilde{m_1}\otimes x_1 \otimes n_1 ) \ra .
\end{align*}
Hence $\Phi$ preserves the left $B$-valued inner products. Also,
\begin{align*}
\la \widetilde{m}\otimes x\otimes n \, , \, \widetilde{m_1}\otimes x_1 \otimes n_1 \ra_B
& =\la n \, , \la \widetilde{m}\otimes x \, , \, \widetilde{m_1}\otimes x_1 \ra_A \cdot n_1 \ra_B \\
& =\la n \, , \, \la x \, , \, \la \widetilde{m} \, , \, \widetilde{m_1} \ra_A \cdot x_1 \ra_A \cdot n_1 \ra_B \\
& =\la n \, , \, \la x \, , \, {}_A \la m \, , \, m_1 \ra \cdot x_1 \ra_A \cdot n_1 \ra_B \\
& =\la n \, , \, \la x\, , \, {}_C \la m \, , \, m_1 \ra \bullet x_1 \ra_C \cdot n_1 \ra_B \\
& =\la n \, , \, [x^{\natural}\bullet {}_C \la m \, , \, m_1 \ra\bullet x_1 ] \cdot n_1 \ra_B \\
& =\la n \, , \, [x^{\natural}\bullet {}_C \la m \, , \, m_1 \ra ]\cdot (x_1 \cdot n_1 ) \ra_B \\
& =\la n \, , \, {}_C \la x^{\natural}\cdot m \, , \, m_1 \ra \cdot (x_1 \cdot n_1 )\ra_B \\
& =\la n \, , \, [x^{\natural}\cdot m] \cdot \la m_1 \, , \, x_1 \cdot n_1 \ra_D \, \ra_B \\
& =\la n \, , \, x^{\natural}\cdot m \ra_D \bullet \la m_1 \, , \, x_1 \cdot n_1 \ra_D \\
& =\la x\cdot n \, , \, m \ra_D \bullet \la m_1 \, , \, x_1 \cdot n_1 \ra_D \\
& =\la m \, , \, x\cdot n \ra_D^{\sharp}\bullet \la m_1 \, , \, x_1 \cdot n_1 \ra_D \\
& =\la \Phi(\widetilde{m}\otimes x\otimes n) \, , \, \Phi(\widetilde{m_1}\otimes x_1 \otimes n_1 ) \ra_B .
\end{align*}
Hence $\Phi$ preserves the right $B$-valued inner products. Furthermore,
$$
\Phi(\widetilde{m}\otimes x\otimes n)^{\natural} =\la m\, , \, x\cdot n \ra_Y^{\natural}
=\la m \, , \, x\cdot n \ra_D^{\sharp}=\la x\cdot n \, , \, m \ra_D =\la x\cdot n \, ,\, m \ra_Y .
$$
On the other hand,
$$
\Phi((\widetilde{m}\otimes x\otimes n)^{\natural})=\Phi(\widetilde{n}\otimes x^{\natural}\otimes m)
=\la n \, , \, x^{\natural}\cdot m \ra_Y =\la x\cdot n \, , \, m \ra_Y =\Phi(\widetilde{m}\otimes x\otimes n)^{\natural} .
$$
Hence $\Phi$ preserves the involutions $\natural$. Therefore, $Y\cong \widetilde{M}\otimes_A X\otimes_A M$
as involutive Hilbert $B-B$-bimodules.
\end{proof}

Let $A$ and $B$ be unital $C^*$-algebras.
Let $X$ and $Y$ be an involutive Hilbert $A-A$-bimodule and an involutive
Hilbert $B-B$-bimodule, respectively. 
We suppose that there is an $A-B$-equivalence bimodule $M$ such that
$$
Y\cong \widetilde{M}\otimes_A X\otimes_A M
$$
as involutive Hilbert $B-B$-bimodules. Let $\Phi$ be an involutive Hilbert $B-B$-bimodule
isomorphism of $\widetilde{M}\otimes_A X\otimes_A M$ onto $Y$. Then by the above discussions,
there are the Hilbert $A-B$-bimodule isomorphisms $\Psi$ of $X\otimes _A M$ onto
$M\otimes_B Y$ and $\widetilde{\Psi}$ of $\widetilde{X}\otimes_A M$ onto
$M\otimes_B \widetilde{Y}$, respectively.
We construct a $C_X -C_Y$-equivalence bimodule
from $M$. Let $C_M$ be the linear span of the set
$$
{}^X \! C_M =\left\{ \begin{bmatrix}
m_1 & x\otimes m_2 \\
\widetilde{x^{\natural}}\otimes m_2 & m_1
\end{bmatrix} \,
| \, \, m_1, m_2 \in M,\, x\in X \right\} .
$$
We define the left $C_X$-action on $C_M$ by
\begin{align*}
& \begin{bmatrix}
a & z  \\
\widetilde{z^{\natural}} & a
\end{bmatrix}
\cdot \begin{bmatrix}
m_1 & x\otimes m_2 \\
\widetilde{x^{\natural}}\otimes m_2 & m_1
\end{bmatrix} \\
& =\begin{bmatrix}
a\otimes m_1 +z\otimes \widetilde{x^{\natural}}\otimes m_2 & a\otimes x\otimes m_2 +z\otimes m_1 \\
\widetilde{z^{\natural}}\otimes m_1 +a\otimes\widetilde{x^{\natural}}\otimes m_2 &
\widetilde{z^{\natural}}\otimes x\otimes m_2 +a\otimes m_1
\end{bmatrix}
\end{align*}
for any $a\in A$, $m_1 , m_2 \in M$, $x, z\in X$, where we regard the tensor product as a left $C_X$-action
on $C_M$ in the
formal manner. But we identify $A\otimes_A M$ and $X\otimes_A \widetilde{X}$, $\widetilde{X}\otimes_A X$
with $M$ and closed two-sided ideals of $A$ by the isomorphism and the monomorphisms defined by
\begin{align*}
a\otimes m\in A\otimes_A M & \mapsto a\cdot m\in M , \\
x\otimes \widetilde{z}\in X\otimes_A \widetilde{X} & \mapsto {}_A \la x, z \ra\in A , \\
\widetilde{x}\otimes z\in \widetilde{X}\otimes_A X & \mapsto \la x, z \ra_A \in A .
\end{align*}
Hence we obtain that
\begin{align*}
& \begin{bmatrix}
a & z  \\
\widetilde{z^{\natural}} & a
\end{bmatrix}
\cdot \begin{bmatrix}
m_1 & x\otimes m_2 \\
\widetilde{x^{\natural}}\otimes m_2 & m_1
\end{bmatrix} \\
& =\begin{bmatrix}
a\cdot m_1 +{}_A \la z\ \, , \, {x^{\natural}}\ra \cdot m_2 &  a\cdot  x\otimes m_2 +z\otimes m_1 \\
\widetilde{z^{\natural}}\otimes m_1 +\widetilde{(a\cdot x)^{\natural}}\otimes m_2 &
\la \widetilde{z^{\natural}} \, , \,  x \ra_A \cdot m_2 +a\cdot m_1
\end{bmatrix}\in C_M .
\end{align*}
We define the right $C_Y$-action on $C_M$ by
\begin{align*}
& \begin{bmatrix} m_1 & x\otimes m_2 \\
\widetilde{x^{\natural}}\otimes m_2 & m_1 \end{bmatrix}\cdot
\begin{bmatrix} b & y \\
\widetilde{y^{\natural}} & b \end{bmatrix} \\
& =\begin{bmatrix} m_1 \otimes b+x\otimes m_2 \otimes \widetilde{y^{\natural}} & m_1
\otimes y+x\otimes m_2\otimes b \\
\widetilde{x^{\natural}}\otimes m_2 \otimes b+m_1 \widetilde{y^{\natural}} & \widetilde{x^{\natural}}\otimes m_2
\otimes y+m_1 \otimes b \end{bmatrix}
\end{align*}
for any $b\in B$, $x\in X$, $y\in Y$, $m_1 , m_2 \in M$, where we regard the tensor product as a right
$C_Y$-action on
$C_M$ in the formal manner. But we identify $X\otimes_A M$ and $\widetilde{X}\otimes_A M$ with
$M\otimes_B Y$ and $M\otimes_B \widetilde{Y}$ by $\Psi$ and $\widetilde{\Psi}$, respectively.
Hence we obtain that
\begin{align*}
& \begin{bmatrix} m_1 & x\otimes m_2 \\
\widetilde{x^{\natural}}\otimes m_2 & m_1 \end{bmatrix}\cdot
\begin{bmatrix} b & y \\
\widetilde{y^{\natural}} & b \end{bmatrix} \\
& =\begin{bmatrix} m_1 \otimes b+x\otimes (\widetilde{\Psi})^{-1}(m_2 \otimes \widetilde{y^{\natural}}) &
\Psi^{-1}(m_1\otimes y)+x\otimes m_2\otimes b \\
\widetilde{x^{\natural}}\otimes m_2 \otimes b+(\widetilde{\Psi})^{-1}(m_1 \otimes\widetilde{y^{\natural}}) &
\widetilde{x^{\natural}}\otimes \Psi^{-1}(m_2 \otimes y)+m_1 \otimes b \end{bmatrix} .
\end{align*}
Furthermore, we identify $M\otimes_B B$ and $Y\otimes_B \widetilde{Y}$, $\widetilde{Y}\otimes_B Y$
with $M$ and closed two-sided ideals of $B$ by the isomorphism and the monomorphisms defined by
\begin{align*}
m\otimes b\in M\otimes_B B & \mapsto m\cdot b\in M , \\
y\otimes\widetilde{z}\in Y\otimes_B \widetilde{Y} & \mapsto {}_B \la y, z \ra\in B , \\
\widetilde{y}\otimes z\in \widetilde{Y}\otimes_B Y & \mapsto \la y, z \ra_B ,
\end{align*}
respectively. Then
$$
x\otimes(\widetilde{\Psi})^{-1}(m_2 \otimes y^{\natural})
=\widetilde{x^{\natural}}\otimes\Psi^{-1}(m_2 \otimes y) , \quad
\begin{bmatrix} m_ 1 & x\otimes m_2 \\
\widetilde{x}^{\natural} \otimes m_2 & m_1 \end{bmatrix}
=\begin{bmatrix} b & y \\
\widetilde{y}^{\natural} & b \end{bmatrix}\in C_M .
$$
Indeed, for any $\epsilon >0$, there finite sets $\{n_k \}, \{l_k \}\subset M$
and $\{z_k \}\subset X$ such that
$$
||\Phi^{-1}(y)-\sum_k \widetilde{n_k}\otimes z_k \otimes l_k ||<\epsilon .
$$
Also,
\begin{align*}
||(\widetilde{\Phi})^{-1}(\widetilde{y}^{\natural})-
[(\sum_k \widetilde{n_k}\otimes z_k \otimes l_k )^{\natural}]^{\widetilde{}}||
& =||[\Phi^{-1}(y)^{\natural}]^{\widetilde{}}-
[(\sum_k \widetilde{n_k}\otimes z_k \otimes l_k )^{\natural}]^{\widetilde{}}|| \\
& =||\Phi^{-1}(y)-\sum_k \widetilde{n_k}\otimes z_k \otimes l_k ||<\epsilon .
\end{align*}
Thus
\begin{align*}
& ||x\otimes(\widetilde{\Psi})^{-1}(m_2 \otimes\widetilde{y^{\natural}})
-x\otimes m_2 \otimes[(\sum_k \widetilde{n_k }\otimes z_k \otimes l_k )^{\natural}]^{\widetilde{}}|| \\
& =||x\otimes m_2 \otimes (\widetilde{\Phi})^{-1}(\widetilde{y}^{\natural})-x\otimes m_2 \otimes
\sum_k \widetilde{n_k}\otimes \widetilde{z_k}^{\natural}\otimes l_k || \\
& \leq ||x|| \, ||m_1||\epsilon
\end{align*}
and
\begin{align*}
& ||\widetilde{x}^{\natural}\otimes\Psi^{-1}(m_2 \otimes y)
-\widetilde{x}^{\natural}\otimes m_2 \otimes\sum_k \widetilde{n_k }\otimes z_k \otimes l_k || \\
& =||\widetilde{x}^{\natural}\otimes m_2 \otimes (\widetilde{\Phi})^{-1}(y)
-\widetilde{x}^{\natural}\otimes m_2 \otimes\sum_k \widetilde{n_k}\otimes \widetilde{z_k}^{\natural}\otimes l_k || \\
& \leq ||x|| \, ||m_1||\epsilon .
\end{align*}
Furthermore,
\begin{align*}
x\otimes m_2 \otimes[(\sum_k \widetilde{n_k}\otimes\widetilde{z_k}\otimes l_k )^{\natural}]^{\widetilde{}}
& =\sum_k x\otimes m_2 \otimes\widetilde{n_k}\otimes\widetilde{z_k}^{\natural}\otimes l_k \\
& =\sum_k x\cdot {}_A \la m_2 \, , \, n_k \ra\otimes\widetilde{z_k}^{\natural}\otimes l_k \\
& =\sum_k {}_A  \la x\cdot {}_A \la m_2 \, , \, n_k \ra \, , z_k^{\natural} \ra \otimes l_k \\
& =\sum_k {}_A \la x\cdot {}_A \la m_2 \, , \, n_k \ra \, , z_k^{\natural} \ra \cdot l_k 
\end{align*}
and
\begin{align*}
\widetilde{x^{\natural}}\otimes m_2 \widetilde{n_k}\otimes z_k \otimes l_k
& =\sum_k \widetilde{x^{\natural}}\otimes m_2 \otimes \widetilde{n_k}\otimes z_k \otimes l_k \\
& =\sum_k \widetilde{x^{\natural}}\otimes {}_A \la m_2 \, , \, n_k \ra \cdot z_k \otimes l_k \\
& =\sum_k \la x^{\natural} \, , \, {}_A \la m_2 \, , \, n_k \ra  \cdot z_k \ra_A \otimes l_k \\
& =\sum_k {}_A \la x \, , \, ({}_A \la m_2 \, , \, n_k \ra \cdot z_k )^{\natural} \ra \cdot l_k \\
& =\sum_k {}_A \la x \, , \, z_k^{\natural}\cdot {}_A \la n_k \, , \, m_2 \ra \ra\cdot l_k \\
& =\sum_k {}_A \la x\cdot {}_A \la m_2 \, , \, n_k \ra \, , z_k^{\natural} \ra \cdot l_k ,
\end{align*}
where we identify $A\otimes_A M$ and $X\otimes_A \widetilde{X}$, $\widetilde{X}\otimes_A X$
with $M$ and closed two-sided ideals of $A$ by the isomorphism and the monomorphisms defined by
\begin{align*}
a\otimes m \in A\otimes_A M & \mapsto a\cdot m\in M , \\
x\otimes\widetilde{z} \in X\otimes_A \widetilde{X} & \mapsto {}_A \la x, z \ra\in A ,\\
\widetilde{x}\otimes z\in \widetilde{X}\otimes_A X & \mapsto \la x, z \ra_A \in A .
\end{align*}
Hence
$$
x\otimes m_2 \otimes[(\sum_k \widetilde{n_k}\otimes\widetilde{z_k}\otimes l_k )^{\natural}]^{\widetilde{}}
\widetilde{x^{\natural}}\otimes m_2 \widetilde{n_k}\otimes z_k \otimes l_k .
$$
It follows that
$$
||x\otimes(\widetilde{\Psi})^{-1}(m_2 \otimes y^{\natural})-\widetilde{}^{\natural}\otimes\Psi^{-1}(m2 \otimes y)||
\leq 2||x|| \, ||m_2 ||\epsilon .
$$
Since $\epsilon$ is arbitrary, we obtain that
$$
x\otimes(\widetilde{\Psi})^{-1}(m_2 \otimes y^{\natural})
=\widetilde{x^{\natural}}\otimes\Psi^{-1}(m_2 \otimes y) , \quad
\begin{bmatrix}
m_1 & x\otimes m_2 \\
\widetilde{x^{\natural}}\otimes m_2 & m_1 \end{bmatrix}\cdot
\begin{bmatrix} b & y \\
\widetilde{y^{\natural}} & b \end{bmatrix}\in C_M .
$$
Before we define a left $C_X$-valued inner product and a right $C_Y$-valued inner product on $C_M$,
we define a conjugate linear map on $C_M$,
$$
\begin{bmatrix} m_1 & x\otimes m_2 \\
\widetilde{x^{\natural}} \otimes m_2 & m_1 \end{bmatrix}\in C_M \mapsto
\begin{bmatrix} m_1 & x\otimes m_2 \\
\widetilde{x^{\natural}} \otimes m_2 & m_1 \end{bmatrix}^{\widetilde{}}\in C_M
$$
by
$$
\begin{bmatrix} m_1 & x\otimes m_2 \\
\widetilde{x^{\natural}} \otimes m_2 & m_1 \end{bmatrix}^{\widetilde{}}
=\begin{bmatrix} \widetilde{m_1} & (\widetilde{x^{\natural}}\otimes m_2 )^{\widetilde{}} \\
(x\otimes m_2 )^{\widetilde{}} & \widetilde{m_1} \end{bmatrix}
$$
for any $m_1, m_2 \in M$, $x\in X$. Since we identify $\widetilde{X\otimes_A M}$
and $\widetilde{\widetilde{X}\otimes_A M}$ with $\widetilde{M}\otimes_A \widetilde{X}$ and
$\widetilde{M}\otimes_A X$ by Lemma \ref{lem:tilde}, respectively,
we obtain that
$$
\begin{bmatrix} m_1 & x\otimes m_2 \\
\widetilde{x^{\natural}} \otimes m_2 & m_1 \end{bmatrix}^{\widetilde{}}
=\begin{bmatrix} \widetilde{m_1} & \widetilde{m_2}\otimes x^{\natural} \\
\widetilde{m_2}\otimes\widetilde{x} & \widetilde{m_1} \end{bmatrix} .
$$
We define the left $C_X$-valued inner product on $C_M$ by
\begin{align*}
& {}_{C_X} \la \begin{bmatrix} m_1 & x\otimes m_2 \\
\widetilde{x^{\natural}}\otimes m_2 & m_1 \end{bmatrix} \, , \,
\begin{bmatrix} n_1 & z\otimes n_2 \\
\widetilde{z^{\natural}}\otimes n_2 & n_1 \end{bmatrix} \ra \\
& =\begin{bmatrix} m_1 & x\otimes m_2 \\
\widetilde{x^{\natural}}\otimes m_2 & m_1 \end{bmatrix}\cdot
\begin{bmatrix} n_1 & z\otimes n_2 \\
\widetilde{z^{\natural}}\otimes n_2 & n_1 \end{bmatrix}^{\widetilde{}} \\
& =\begin{bmatrix} m_1 & x\otimes m_2 \\
\widetilde{x^{\natural}}\otimes m_2 & m_1 \end{bmatrix}\cdot
\begin{bmatrix} \widetilde{n_1} & \widetilde{n_2}\otimes z^{\natural} \\
\widetilde{n_2}\otimes \widetilde{z} & \widetilde{n_1} \end{bmatrix} \\
& =\begin{bmatrix} m_1 \otimes\widetilde{n_1}+x\otimes m_2 \otimes \widetilde{n_2}\otimes \widetilde{z} &
m_1 \otimes \widetilde{n_2}\otimes z^{\natural}+x\otimes m_2 \otimes \widetilde{n_1} \\
\widetilde{x^{\natural}}\otimes m_2 \otimes \widetilde{n_1}+m_1 \otimes \widetilde{n_2}\otimes \widetilde{z} &
\widetilde{x^{\natural}}\otimes m_2 \otimes \widetilde{n_2}\otimes z^{\natural}+m_1 \otimes \widetilde{n_1}
\end{bmatrix}
\end{align*}
for any $m_1 , m_2 , n_1 , n_2 \in M$, $x, z\in X$, where we regard the tensor product as a product in $C_M$ in the
formal manner. Identifying in the same way as above,
\begin{align*}
& {}_{C_X} \la \begin{bmatrix} m_1 & x\otimes m_2 \\
\widetilde{x^{\natural}}\otimes m_2 & m_1 \end{bmatrix} \, , \,
\begin{bmatrix} n_1 & z\otimes n_2 \\
\widetilde{z^{\natural}}\otimes n_2 & n_1 \end{bmatrix} \ra \\
& =\begin{bmatrix} {}_A \la m_1, n_1 \ra+{}_A \la x\cdot {}_A \la m_2, n_2 \ra \, ,z \ra &
{}_A \la m_1 , n_2 \ra , \cdot z^{\natural}+x\cdot {}_A \la m_2 , n_1 \ra \\
\widetilde{x^{\natural}}\cdot {}_A \la m_2 , n_1 \ra +{}_A \la m_1 , n_2 \ra \cdot\widetilde{z} &
{}_A \la x\cdot {}_A \la m_2 , n_2 \ra \, , z\ra +{}_A \la m_1 , n_1 \ra \end{bmatrix} .
\end{align*}
We define the right $C_Y$-valued inner product on $C_M$ by
\begin{align*}
& \la \begin{bmatrix} m_1 & x\otimes m_2 \\
\widetilde{x^{\natural}}\otimes m_2 & m_1 \end{bmatrix} \, , \,
\begin{bmatrix} n_1 & z\otimes n_2 \\
\widetilde{z^{\natural}}\otimes n_2 & n_1 \end{bmatrix} \ra_{C_Y} \\
& =\begin{bmatrix} m_1 & x\otimes m_2 \\
\widetilde{x^{\natural}}\otimes m_2 & m_1 \end{bmatrix}^{\widetilde{}}\cdot
\begin{bmatrix} n_1 & z\otimes n_2 \\
\widetilde{z^{\natural}}\otimes n_2 & n_1 \end{bmatrix} \\
& =\begin{bmatrix} \widetilde{m_1} & \widetilde{m_2}\otimes x^{\natural} \\
\widetilde{m_2}\otimes\widetilde{x} & \widetilde{m_1} \end{bmatrix}\cdot
\begin{bmatrix} n_1 & z\otimes n_2 \\
\widetilde{z^{\natural}}\otimes n_2 & n_1 \end{bmatrix} \\
& =\begin{bmatrix} \widetilde{m_1}\otimes n_1
+\widetilde{m_2}\otimes x^{\natural}\otimes\widetilde{z^{\natural}}\otimes n_2 &
\widetilde{m_1}\otimes z \otimes n_2 +\widetilde{m_2}\otimes x^{\natural}\otimes n_1 \\
\widetilde{m_2}\otimes\widetilde{x}\otimes n_1 +\widetilde{m_1}\otimes\widetilde{z^{\natural}}\otimes n_2 & 
\widetilde{m_2}\otimes\widetilde{x}\otimes z \otimes n_2 +\widetilde{m_1}\otimes n_1 \end{bmatrix}
\end{align*}
for any $m_1, m_2 , n_1 , n_2 \in M$, $x, z\in X$, where we regard the tensor product as
a product in $C_M$ in the formal manner. Identifying in the same way as above and by the
isomorphism $\Psi$ and $\widetilde{\Psi}$,
\begin{align*}
& \la \begin{bmatrix} m_1 & x\otimes m_2 \\
\widetilde{x^{\natural}}\otimes m_2 & m_1 \end{bmatrix} \, , \,
\begin{bmatrix} n_1 & z\otimes n_2 \\
\widetilde{z^{\natural}}\otimes n_2 & n_1 \end{bmatrix} \ra_{C_Y} \\
& =\begin{bmatrix}\la m_1 , n_1 \ra_B +\la m_2 , \la x, z \ra_A \cdot n_2 \ra_B &
\widetilde{m_1}\otimes\Psi(z\otimes n_2 )+\widetilde{m_2}\otimes\Psi(x^{\natural}\otimes n_1) \\
\widetilde{m_2}\otimes\widetilde{\Psi}(\widetilde{x}\otimes n_1 )
+\widetilde{m_1}\otimes\widetilde{\Psi}(\widetilde{z^{\natural}}\otimes n_2) &
\la m_2 , \la x, z \ra_A \cdot n_2 \ra_B +\la m_1 , n_1 \ra_B \end{bmatrix} .
\end{align*}
Here,
\begin{align*}
\widetilde{m_1}\otimes\Psi(z\otimes n_2 ) &=\sum_i \widetilde{m_1}\otimes u_i \otimes\Phi(\widetilde{u_i}
\otimes z\otimes n_2 ) \\
& =\sum_i \la m_1 , u_i \ra_B \cdot \Phi(\widetilde{u_i}\otimes z\otimes n_2 ) \\
& =\sum_i \Phi(\la m_1 , u_i \ra_B \cdot\widetilde{u_i}\otimes z\otimes n_2 ) \\
& =\sum_i \Phi([u_i \cdot \la u_i , m_1 \ra_B ]^{\widetilde{}}\otimes z\otimes n_2 ) \\
& =\sum_i \Phi([{}_A \la u_i , u_i \ra \cdot m_1 ]^{\widetilde{}}\otimes z\otimes n_2 ) \\
& =\Phi(\widetilde{m_1}\otimes z\otimes n_2 )\in Y , \\
\widetilde{m_2}\otimes\Psi(x^{\natural}\otimes n_1 ) & =\Phi(\widetilde{m_2}\otimes x^{\natural}\otimes n_1 )\in Y
\end{align*}
Also,
\begin{align*}
\widetilde{m_2}\otimes\widetilde{\Psi}(\widetilde{x}\otimes n_1 ) & =\sum_i \widetilde{m_2}
\otimes u_i \otimes\widetilde{\Phi}(\widetilde{u_i}\otimes\widetilde{x}\otimes n_1 ) \\
& =\sum_i \la m_2 , u_i \ra_B\cdot \Phi(\widetilde{n_1}\otimes x\otimes u_i )^{\widetilde{}} \\
& =\sum_i \Phi(\widetilde{n_1}\otimes x \otimes u_i \cdot \la u_i , m_2 \ra_B )^{\widetilde{}} \\
& =\sum_i \Phi(\widetilde{n_1}\otimes x \otimes {}_A \la u_i , u_i \ra \cdot m_2 )^{\widetilde{}} \\
& =\Phi (\widetilde{n_1}\otimes x \otimes m_2 )^{\widetilde{}}\in\widetilde{Y} , \\
\widetilde{n_1}\otimes \widetilde{\Phi}(\widetilde{z^{\natural}}\otimes n_2 ) & =\sum_i
\widetilde{m_1}\otimes u_i \otimes\widetilde{\Phi}
(\widetilde{u_i }\otimes \widetilde{z^{\natural}}\otimes n_2 ) \\
& =\sum_i \la m_1 , u_i \ra_B \cdot 
\Phi(\widetilde{n_2}\otimes z^{\natural}\otimes u_i )^{\widetilde{}} \\
& =\sum_i \Phi (\widetilde{n_2}\otimes z^{\natural}\otimes m_1 )^{\widetilde{}}\in\widetilde{Y} .
\end{align*}
Thus
\begin{align*}
[\widetilde{m_2}\otimes\widetilde{\Psi}(\widetilde{x}\otimes n_1 )
+\widetilde{n_1}\otimes\widetilde{\Psi}
(\widetilde{z^{\natural}}\otimes n_2 )]^{\widetilde{\natural}}
& =\Phi(\widetilde{n_1}\otimes x \otimes m_2 )^{\natural}+\Phi(\widetilde{n_2}\otimes z^{\natural}
\otimes m_1 )^{\natural} \\
& =\Phi(\widetilde{m_2}\otimes x^{\natural}\otimes n_1 )
+\Phi(\widetilde{m_1}\otimes z\otimes n_2 ) \\
& =\widetilde{m_1}\otimes\Psi(z\otimes x)+\widetilde{m_2}\otimes\Psi(x^{\natural}\otimes n_1 ) .
\end{align*}
Hence
$$
\la \begin{bmatrix} m_1 & x\otimes m_2 \\
\widetilde{x^{\natural}}\otimes m_2 & m_1 \end{bmatrix} \, , \,
\begin{bmatrix} n_1 & z\otimes n_2 \\
\widetilde{z^{\natural}}\otimes n_2 & n_1 \end{bmatrix} \ra_{C_Y} \in C_Y .
$$
By the above definitions $C_M$ has the left $C_X$-and the right $C_Y$-actions and the left
$C_X$-valued inner product and the right $C_Y$-inner product.
\par
Let $C_M '$ be the linear span of the set
$$
C_M^Y =\{ \begin{bmatrix} m_1 & m_2 \otimes y \\
m_2 \otimes \widetilde{y^{\natural}} & m_1 \end{bmatrix} \, | \, m_1, m_2 \in M , \, y\in Y \} .
$$
In the similar way to the above, we define a left $C_X$-and a right $C_Y$-actions on $C_M '$
and a left $C_X$-valued inner product and a right $C_Y$-valued inner product.
But identifying $X\otimes_A M$ and $\widetilde{X}\otimes_A M$ with $M\otimes_B Y$ and
$M\otimes_B \widetilde{Y}$ by $\Psi$ and $\widetilde{\Psi}$, respectively,
we can see that each of them coincides with the other by routine computations. For example,
we show that the right $C_Y$-actions on $C_M$ and $C_M '$ coincide by $\Psi$ and
$\widetilde{\Psi}$. Indeed, for any $m_1 , m_2 \in M$, $x\in X$, $b\in B$, $y\in Y$,
\begin{align*}
& \begin{bmatrix} m_1 & x\otimes m_2 \\
\widetilde{x^{\natural}}\otimes m_2 & m_1 \end{bmatrix}\cdot
\begin{bmatrix}b & y \\
\widetilde{y^{\natural}} & b \end{bmatrix} \\
& =\begin{bmatrix} m_1 \cdot b+x\otimes m_2 \otimes \widetilde{y^{\natural}} &
m_1 \otimes y+x\otimes m_2 \cdot b \\
\widetilde{x^{\natural}}\otimes m_2 \cdot b+m_1 \otimes\widetilde{y^{\natural}} &
\widetilde{x^{\natural}}\otimes m_2 \otimes y+m_1 \cdot b \end{bmatrix} .
\end{align*}
Regarding elements in $X\otimes_A M$ and $\widetilde{X}\otimes_A M$ as elements
in $M\otimes_B Y$ and $M\otimes_B \widetilde{Y}$ by the isomorphisms $\Psi$ and
$\widetilde{\Psi}$ defined as above, respectively,
\begin{align*}
& \begin{bmatrix} m_1 & x\otimes m_2 \\
\widetilde{x^{\natural}}\otimes m_2 & m_1 \end{bmatrix} \cdot
\begin{bmatrix} b & y \\
\widetilde{y^{\natural}} & b \end{bmatrix} \\
& =\begin{bmatrix} m_1 \cdot b+\sum_i u_i \otimes\Phi(\widetilde{u_i}\otimes x\otimes m_2 )
\otimes\widetilde{y^{\natural}} &
m_1 \otimes y+\sum_i \widetilde{u_i}\otimes\Phi(u_i \otimes x\otimes m_2 \cdot b) \\
\sum_i u_i \otimes \widetilde{\Phi}(\widetilde{u_i}\otimes \widetilde{x^{\natural}}\otimes m_2 \cdot b)
+m_1 \otimes\widetilde{y^{\natural}} &
\sum_i u_i\otimes\widetilde{\Phi}(\widetilde{u_i} \otimes\widetilde{x^{\natural}}\otimes m_2 )\otimes y
+m_1 \cdot b \end{bmatrix} \\
& =\begin{bmatrix} m_1 \cdot b+\sum_i u_i \, \otimes {}_B \la \Phi(\widetilde{u_i}\otimes x\otimes m_2) \, , \,
y^{\natural} \ra & 
m_1 \otimes y+\sum_i \widetilde{u_i}\otimes\Phi(u_i \otimes x\otimes m_2 )\cdot b \\
\sum_i u_i \otimes\widetilde{\Phi}(\widetilde{u_i}\otimes\widetilde{x^{\natural}}\otimes m_2 )\cdot b
+m_1 \otimes\widetilde{y^{\natural}} &
\sum_i u_i \otimes \la \Phi(\widetilde{m_2}\otimes x^{\natural}\otimes u_i ) \, , \,
y \ra_B +m_1 \cdot b \end{bmatrix} .
\end{align*}
On the other hand,
\begin{align*}
& \begin{bmatrix} m_1 & x\otimes m_2 \\
\widetilde{x^{\natural}}\otimes m_2 & m_1 \end{bmatrix} \cdot
\begin{bmatrix} b & y \\
\widetilde{y^{\natural}} & b \end{bmatrix} \\
& =\begin{bmatrix}
m_1 & \sum_i u_i \otimes\Phi(\widetilde{u_i}\otimes x\otimes m_2 ) \\
\sum_i u_i \otimes \widetilde{\Phi}(\widetilde{u_i}\otimes\widetilde{x^{\natural}}\otimes m_2 ) & m_1 \end{bmatrix}
\cdot \begin{bmatrix} b & y \\
\widetilde{y^{\natural}} & b \end{bmatrix} \\
& =\begin{bmatrix} m_1 \cdot b+\sum_i u_i \, \otimes {}_B \la \Phi(\widetilde{u_i}\otimes x\otimes m_2) \, , \,
y^{\natural} \ra & 
m_1 \otimes y+\sum_i \widetilde{u_i}\otimes\Phi(u_i \otimes x\otimes m_2 )\cdot b \\
\sum_i u_i \otimes\widetilde{\Phi}(\widetilde{u_i}\otimes\widetilde{x^{\natural}}\otimes m_2 )\cdot b
+m_1 \otimes\widetilde{y^{\natural}} &
\sum_i u_i \otimes \la \Phi(\widetilde{m_2}\otimes x^{\natural}\otimes u_i ) \, , \,
y \ra_B +m_1 \cdot b \end{bmatrix} .
\end{align*}
Hence the right $C_Y$-actions on $C_M$ and $C_M '$ coincide.
Similarly, we can see that the left $C_X$-actions on $C_M$ and $C_M '$ coincide.
Also, we can see that the left $C_X$-valued inner products on $C_M$ and $C_M '$
coincide. Indeed,
let $x, z\in X$, $m_1 , m_2 , n_1 , n_2\in M$.
Then
\begin{align*}
& {}_{C_X} \la \begin{bmatrix} m_1 & x\otimes m_2 \\
\widetilde{x^{\natural}}\otimes m_2 & m_1 \end{bmatrix}\, , \,
\begin{bmatrix} n_1 & z\otimes n_2 \\
\widetilde{z^{\natural}}\otimes n_2 & n_1 \end{bmatrix} \ra \\
& =\begin{bmatrix} m_1 & x\otimes m_2 \\
\widetilde{x^{\natural}}\otimes m_2 & m_1 \end{bmatrix}
\begin{bmatrix} n_1 & z\otimes n_2 \\
\widetilde{z^{\natural}}\otimes n_2 & n_1 \end{bmatrix}^{\widetilde{}}
=\begin{bmatrix} m_1 & x\otimes m_2 \\
\widetilde{x^{\natural}}\otimes m_2 & m_1 \end{bmatrix}
\begin{bmatrix} \widetilde{n_1} & \widetilde{n_2}\otimes z^{\natural} \\
\widetilde{n_2}\otimes \widetilde{z} & \widetilde{n_1} \end{bmatrix} \\
& =\begin{bmatrix} {}_A \la m_1 \, , \, n_1 \ra +x\otimes m_2 \otimes \widetilde{n_2}\otimes\widetilde{z} &
{}_A \la m_1 \, , \, n_2 \ra\otimes z^{\natural}+x\otimes \, {}_A \la m_2 \, , \, n_1 \ra \\
\widetilde{x^{\natural}}\otimes \, {}_A \la m_2 \, , \, n_1 \ra +\, {}_A \la m_1 \, , \, n_2 \ra \otimes\widetilde{z} &
\widetilde{x^{\natural}}\otimes\, {}_A \la m_2 \, , \, n_2 \ra \otimes z^{\natural}+\, {}_A \la m_2 \, , \, n_1 \ra 
\end{bmatrix} \\
& =\begin{bmatrix}{}_A \la m_1 \, , \, n_1 \ra+x\otimes\, {}_A \la m_2 \, , \, n_2 \ra \otimes\widetilde{z} &
(z\cdot {}_A  \la n_2 \, , \, m_1 \ra )^{\natural}+x\cdot {}_A \la m_2 \, , \, n_1 \ra \\
[(x\cdot {}_A \la m_2 \, , \, n_1 \ra )^{\natural}]^{\widetilde{}}+(z\cdot {}_A \la n_2 \, , \, m_1 \ra )^{\widetilde{}} &
\la x^{\natural} \, , \, (z\cdot {}_A \la n_2 \, , \, m_2 \ra )^{\natural}\ra_A +\, {}_A \la m_1 \, , \, n_1 \ra \end{bmatrix} \\
& =\begin{bmatrix} {}_A \la m_1 \, , \, n_1 \ra+\, {}_A \la x\cdot  {}_A \la m_2 \, , \, n_2 \ra \, , \, z \ra &
(z\cdot  {}_A \la n_2 \, , \, m_1 \ra )^{\natural}+x\cdot  {}_A \la m_2 \, , \, n_1 \ra \\
[(x\cdot  {}_A \la m_2 \, , \, n_1 \ra )^{\natural}]^{\widetilde{}}+(z\cdot  {}_A \la n_2 \, , \, m_1 \ra)^{\widetilde{}} &
\la x^{\natural} , \, \, (z\cdot {}_A \la n_2 \, , \, m_2 \ra)^{\natural}\ra_A +\, {}_A \la m_1 \, , \, n_1 \ra \end{bmatrix} \\
& =\begin{bmatrix} {}_A \la m_1 \ , \, n_1 \ra +{}_A \la x\otimes m_2 \, , \, z\otimes n_2 \ra &
(z\cdot {}_A \la n_2 \, , \, m_1 \ra )^{\natural} +x\cdot {}_A \la m_2\, , \, n_1 \ra \\
[(x\cdot {}_A \la m_2 \, , \, n_1 \ra )^{\natural}]^{\widetilde{}}+(z\cdot {}_A \la n_2 \, , \, m_1 \ra )^{\widetilde{}} &
{}_A \la x \, , \, z\cdot {}_A \la n_2 \, , \, m_2 \ra \ra +{}_A \la m_1 \, , \, n_1 \ra \end{bmatrix} \\
& =\begin{bmatrix} {}_A \la m_1 \, , \, n_1 \ra +{}_A \la x\otimes m_2 \, , \, z\otimes n_2 \ra &
(z\cdot {}_A \la n_2 \, , \, m_1 \ra)^{\natural}+x\cdot {}_A \la m_2 \, , \, n_1 \ra \\
[(x\cdot {}_A \la m_2 \, , \, n_1 \ra )^{\natural}]^{\widetilde{}}+(z\cdot {}_A \la n_2 \, , \, m_1 \ra )^{\widetilde{}} &
{}_A \la x\otimes m_2 \, , \, z\otimes n_2 \ra +{}_A \la m_1 \, , \, n_1 \ra \end{bmatrix} .
\end{align*}
On the other hand,
\begin{align*}
& {}_{C_X} \la \begin{bmatrix} m_1 & \Psi(x\otimes m_2 ) \\
\widetilde{\Psi}(\widetilde{x^{\natural}}\otimes m_2 ) & m_1 \end{bmatrix} \, , \,
\begin{bmatrix} n_1 & \Psi(z\otimes n_2 ) \\
\widetilde{\Psi}(\widetilde{z^{\natural}}\otimes n_2 ) & n_1 \end{bmatrix} \ra \\
& =\begin{bmatrix} m_1 & \Psi(x\otimes m_2 ) \\
\widetilde{\Psi}(\widetilde{x^{\natural}}\otimes m_2 ) & m_1 \end{bmatrix}
\begin{bmatrix} \widetilde{n_1} & \widetilde{\Psi}(\widetilde{z^{\natural}}\otimes n_2 )^{\widetilde{}\,\,} \\
\Psi(z\otimes n_2 )^{\widetilde{}} & \widetilde{n_1} \end{bmatrix} \\
& =\begin{bmatrix} {}_A \la m_1 \, , \, n_1 \ra +{}_A \la \Psi(x\otimes m_2 ) \, , \, \Psi(z\otimes n_2 )\ra &
m_1 \otimes \widetilde{\Psi}(\widetilde{z^{\natural}}\otimes n_2 )^{\widetilde{}}+
\Psi(x\otimes m_2 )\otimes \widetilde{n_1} \\
\widetilde{\Psi}(\widetilde{x^{\natural}}\otimes m_2 )\otimes \widetilde{n_1}
+m_1 \otimes \Psi(y\otimes n_2 )^{\widetilde{}} &
{}_A \la \widetilde{\Psi}(\widetilde{x^{\natural}}\otimes m_2 ) \, , \, \widetilde{\Psi}(\widetilde{z^{\natural}}\otimes n_2 )\ra
+{}_A \la m_1 \, , \, n_1 \ra \end{bmatrix} .
\end{align*}
Here,
$$
{}_A \la \Psi(x\otimes m_2 ) \, , \, \Psi(z\otimes n_2 )\ra =\, {}_A \la x\otimes m_2 \, , \, z\otimes n_2 \ra .
$$
Also,
\begin{align*}
& m_1 \otimes\widetilde{\Psi}(\widetilde{z^{\natural}}\otimes n_2 )^{\widetilde{}}
+\Psi(x\otimes m_2 )\otimes \widetilde{n_1} \\
& =m_1 \otimes \sum_i [u_i \otimes \widetilde{\Phi}(\widetilde{u_i}\otimes\widetilde{z^{\natural}}
\otimes n_2 )]^{\widetilde{}}
+\sum_i u_i \otimes\Phi(\widetilde{u_i}\otimes x\otimes m_2 )\otimes\widetilde{n_1} \\
& =m_1 \otimes\sum_i \Phi(\widetilde{n_2}\otimes z^{\natural}\otimes u_i )\otimes\widetilde{u_i}+
\sum_i u_i \otimes\Phi(\widetilde{u_i}\otimes x\otimes m_2 )\otimes \widetilde{n_1} .
\end{align*}
Since we identify $\widetilde{M}\otimes_A X\otimes_A M$ with $Y$ by the involutive Hilbert $B-B$-bimodule 
isomorphism $\Phi$,
\begin{align*}
& m_1 \otimes \sum_i \Phi(\widetilde{n_2}\otimes z^{\natural}\otimes u_i )\otimes\widetilde{u_i}
+\sum_i u_i \otimes \Phi(\widetilde{u_i}\otimes x\otimes m_2 )\otimes\widetilde{n_1} \\
& =m_1 \otimes \sum_i \widetilde{n_2}\otimes z^{\sharp}\otimes u_i \otimes\widetilde{u_i}
+\sum_i u_i \otimes \widetilde{u_i}\otimes x\otimes m_2 \otimes\widetilde{n_1} \\
& =\,{}_A \la m_1\, , \, n_2 \ra \cdot z^{\natural}+x\cdot {}_A \la m_2 \, , \, n_1 \ra 
=(z\cdot{}_A \la n_2 \, , \, m_1 \ra )^{\natural}+x\cdot {}_A \la m_2 \, , \, n_1 \ra .
\end{align*}
Similarly,
\begin{align*}
& \widetilde{\Psi}(\widetilde{x^{\natural}}\otimes m_2 )\otimes \widetilde{n_1}
+m_1 \otimes\Psi(z\otimes n_2 )^{\widetilde{}} \\
& =\sum_i u_i \otimes \widetilde{\Phi}(\widetilde{u_i}\otimes\widetilde{x^{\natural}}\otimes m_2 )
\otimes\widetilde{n_1}+
m_1 \otimes\sum_i [u_i \otimes \Phi(\widetilde{u_i}\otimes z\otimes n_2 )]^{\widetilde{}} \\
& =\sum_i u_i \widetilde{\Phi}(\widetilde{u_i}\otimes\widetilde{x^{\natural}}\otimes m_2 )\otimes\widetilde{n_1}
+m_1 \otimes\sum_i \Phi(\widetilde{u_i}\otimes z\otimes n_2 )^{\widetilde{}}\otimes\widetilde{u_i} \\
& =\sum_i u_i \otimes\widetilde{\Phi}(\widetilde{u_i}\otimes\widetilde{x^{\natural}}\otimes m_2 ) \otimes\widetilde{n_1}
+ m_1 \otimes\sum_i \widetilde{\Phi}(\widetilde{n_2}\otimes\widetilde{z}\otimes u_i )\otimes \widetilde{u_i} .
\end{align*}
Also, since we identify $\widetilde{M}\otimes_A \widetilde{X}\otimes _A M$ with $\widetilde{Y}$
by the involutive Hilbert $B-B$-bimodule isomorphism $\widetilde{\Phi}$,
we see that
\begin{align*}& \sum_i u_i \otimes\widetilde{\Phi}(\widetilde{u_i}\otimes\widetilde{x^{\natural}}\otimes m_2 )
\otimes \widetilde{n_1}+m_1 \otimes \sum_i \widetilde{\Phi}(\widetilde{n_2}
\otimes\widetilde{z}\otimes u_i )\otimes\widetilde{u_i} \\
& =\sum_i {}_A \la u_i \, , \, u_i \ra\cdot \widetilde{x^{\natural}}\cdot{}_A \la m_2 \, , \, n_1 \ra
+\sum_i {}_A \la m_1 \, , \,  n_2 \ra \cdot \widetilde{z}\cdot {}_A \la u_i \, , \, u_i \ra \\
& =[(x\cdot {}_A \la m_2 \, , \, n_1 \ra )^{\natural}]^{\widetilde{}}
+(z\cdot {}_A \la n_2 \, , \, m_1 \ra )^{\widetilde{}} .
\end{align*}
Furthermore,
\begin{align*}
{}_A \la \widetilde{\Psi}(\widetilde{x^{\natural}}\otimes m_2 ) \, , \,
\widetilde{\Psi}(\widetilde{z^{\natural}}\otimes n_2 ) \ra +{}_A \la m_1 \, , \, n_1 \ra
& ={}_A \la \widetilde{x^{\natural}}\otimes m_2 \, , \, \widetilde{z^{\natural}}\otimes n_2 \ra
+{}_A \la m_1 \, , \, n_1 \ra \\
& ={}_A \la \widetilde{x^{\natural}}\cdot {}_A \la m_2 \, , \, n_2 \ra \, , \, \widetilde{z^{\natural}} \ra
+{}_A \la m_1 \, , \, n_1 \ra \\
& ={}_A \la [(x\cdot {}_A \la m_2 \, , \, n_2 \ra )^{\natural}]^{\widetilde{}} \, , \, \widetilde{z^{\natural}} \ra
+{}_A \la m_1 \, , \, n_1 \ra \\
& =\la (x\cdot {}_A \la m_2 \, , \, n_2 \ra )^{\natural} \, , z^{\natural} \ra_A
+{}_A \la m_1 \, , \, n_1 \ra \\
& ={}_A \la x\cdot {}_A \la m_2 \, , \, n_2 \ra \, , \, z \ra +{}_A \la m_1 \, , \, n_1 \ra \\
& ={}_A \la x\otimes m_2 \, , \, z\otimes n_2 \ra +{}_A \la m_1\, , \, n_1 \ra .
\end{align*}
Thus, the left $C_X$-valued inner products on $C_M$ and $C_M ' $ coincide.
Similarly we can see that the right $C_Y$-valued inner products
on $C_M$ and $C_M '$ coincide. Hence we obtain the following lemma:

\begin{lemma}\label{lem:bimodule} With the above notation, $C_M$ is a
$C_X - C_Y$-equivalence bimodule.
\end{lemma}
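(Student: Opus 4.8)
The plan is to recognise $C_M$, together with the left $C_X$- and right $C_Y$-actions and the two inner products constructed above, as an interior tensor product of Hilbert $C^*$-modules from each side, and then to read off the axioms of an equivalence bimodule from this. First I would note that, on unwinding the definitions---in particular the identifications $A\otimes_A M\cong M$, $X\otimes_A\widetilde{X}\cong{}_A \la X, X \ra$ and $\widetilde{X}\otimes_A X\cong \la X, X \ra_A$ used when defining the left $C_X$-action and the left $C_X$-valued inner product on $C_M$---the object $C_M$ with its left $C_X$-action and its left $C_X$-valued inner product \emph{is} the interior tensor product $C_X\otimes_A M$, where $C_X$ is the standard left Hilbert $C_X$-module over itself carrying the right $A$-action coming from $A\subset C_X$ (which acts by adjointable operators), and $M$ is a left Hilbert $A$-module via ${}_A \la -, - \ra$; dually $C_M '$ with its right $C_Y$-action and right $C_Y$-valued inner product is the interior tensor product $M\otimes_B C_Y$. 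Since $M$ is an equivalence bimodule between unital $C^*$-algebras it is finitely generated projective over both $A$ and $B$, so these interior tensor products coincide with the algebraic spans $C_M$ and $C_M '$ and are already complete. By the discussion preceding the lemma---the identifications through $\Psi$ and $\widetilde{\Psi}$, and the coincidence there of the two left $C_X$-actions, of the two right $C_Y$-actions, and of the two pairs of inner products---$C_M$ and $C_M '$ are one and the same bimodule, so $C_M$ simultaneously carries the structure of the left Hilbert $C_X$-module $C_X\otimes_A M$ and of the right Hilbert $C_Y$-module $M\otimes_B C_Y$.

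From the $C_X\otimes_A M$ picture I would then deduce that the left $C_X$-valued inner product on $C_M$ is positive, that $C_M$ is complete, and, using that $M$ is full as a left Hilbert $A$-module and that $C_X$ is unital, that ${}_{C_X} \la C_M, C_M \ra =\overline{C_X \, {}_A \la M, M \ra \, C_X}=\overline{C_X A C_X}=C_X$; dually, from the $M\otimes_B C_Y$ picture, the right $C_Y$-valued inner product is positive and $\la C_M, C_M \ra_{C_Y}=\overline{C_Y \, \la M, M \ra_B \, C_Y}=\overline{C_Y B C_Y}=C_Y$. After that it remains only to verify that the left $C_X$-action and the right $C_Y$-action on $C_M$ commute, that each inner product is linear, conjugate-symmetric with respect to the involutions $\sharp$ on $C_X$ and $C_Y$, and compatible with the corresponding action, and that ${}_{C_X} \la \xi, \eta \ra \cdot \zeta =\xi \cdot \la \eta, \zeta \ra_{C_Y}$ for all $\xi, \eta, \zeta\in C_M$. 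Granting these, $C_M$ is a $C_X - C_Y$-equivalence bimodule; the equality of the norms $||{}_{C_X} \la -, - \ra||^{\frac{1}{2}}$ and $||\la -, - \ra_{C_Y}||^{\frac{1}{2}}$ and the completeness then follow by the standard theory of equivalence bimodules, as in the proof of Lemma \ref{lem:equivalence}. All of these identities I would check by direct computation on the generators $\begin{bmatrix} m_1 & x\otimes m_2 \\ \widetilde{x^{\natural}}\otimes m_2 & m_1 \end{bmatrix}$ of $C_M$, in the same routine style as in the discussion above and in Lemmas \ref{lem:property} and \ref{lem:full}.

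The hard part will be the last compatibility identity ${}_{C_X} \la \xi, \eta \ra \cdot \zeta =\xi \cdot \la \eta, \zeta \ra_{C_Y}$: it is the one relation that genuinely couples the $C_X\otimes_A M$ and the $M\otimes_B C_Y$ descriptions of $C_M$, and so cannot be obtained from either tensor-product picture in isolation. To handle it I would expand both sides on generators and pass back and forth between the two matrix forms of an element of $C_M$ using the relation $x\otimes(\widetilde{\Psi})^{-1}(m\otimes y^{\natural})=\widetilde{x^{\natural}}\otimes\Psi^{-1}(m\otimes y)$ established above, together with the defining formulas for $\Psi$, $\widetilde{\Psi}$ and the involutive Hilbert $B-B$-bimodule isomorphism $\Phi$.
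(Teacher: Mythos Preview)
Your proposal is correct and arrives at the result, but the route differs from the paper's. The paper does not invoke the interior tensor product pictures $C_X\otimes_A M$ and $M\otimes_B C_Y$ at all; instead it verifies directly that the left $C_X$-action and left $C_X$-valued inner product on $C_M$ satisfy Conditions (a)--(d) of \cite[Proposition~1.12]{KW1:bimodule}, and symmetrically for the right $C_Y$-data, then notes that the associativity ${}_{C_X}\la\xi,\eta\ra\cdot\zeta=\xi\cdot\la\eta,\zeta\ra_{C_Y}$ holds. Fullness is obtained not by your closure argument $\overline{C_X\,{}_A\la M,M\ra\,C_X}=C_X$ but by producing explicit finite bases: from $\{u_i\},\{v_j\}\subset M$ with $\sum_i{}_A\la u_i,u_i\ra=1$ and $\sum_j\la v_j,v_j\ra_B=1$ the paper sets $U_i=\begin{bmatrix}u_i&0\\0&u_i\end{bmatrix}$ and $V_j=\begin{bmatrix}v_j&0\\0&v_j\end{bmatrix}$, checks $\sum_i{}_{C_X}\la U_i,U_i\ra=1_{C_X}$ and $\sum_j\la V_j,V_j\ra_{C_Y}=1_{C_Y}$, and then the associativity turns these into a right $C_Y$-basis and a left $C_X$-basis respectively, so \cite[Proposition~1.12]{KW1:bimodule} applies directly.

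Your tensor-product recognition is more conceptual and immediately explains positivity, completeness and fullness without any matrix bookkeeping; the paper's explicit diagonal bases are more concrete and feed straight into the cited Kajiwara--Watatani criterion, sidestepping the need to argue separately about completeness or equality of the two norms. Both approaches leave the associativity identity as the one genuinely bimodule-mixing computation; the paper simply asserts it is easily seen, whereas you single it out as the hard part and sketch how to attack it via $\Psi$, $\widetilde\Psi$ and $\Phi$.
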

\begin{proof}
By the definitions of the left $C_X$-action and the left $C_X$-valued inner product
on $C_M$, we can see that Conditions (a)-(d) in \cite [Proposition 1.12]{KW1:bimodule}
hold. By the definitions of the right $C_Y$-action and the right $C_Y$-valued inner
product on $C_M$, we can also see that the similar conditions to Conditions
(a)-(d) in \cite [Proposition 1.12]{KW1:bimodule} hold. Furthermore, we can easily see
that the associativity of the left $C_X$-valued inner product and the right $C_Y$-valued inner product
holds. Since $M$ is an $A-B$-equivalence bimodule, there are finite subsets
$\{u_i \}_{i=1}^n$ and $\{v_j \}_{j=1}^m$ of $M$ such that
$$
\sum_{i=1}^n {}_A \la u_i , u_i \ra=1 , \quad
\sum_{j=1}^m \la v_j , v_j \ra_B =1 .
$$
Let $U_i =\begin{bmatrix} u_i & 0 \\
0 & u_i \end{bmatrix}$ for any $i$ and let $V_j =\begin{bmatrix} v_j & 0 \\
0 & v_j \end{bmatrix}$ for any $j$. 
Then $\{U_i\}$ and $\{V_j\}$ are finite subsets of $C_M$ and
$$
\sum_{i=1}^n {}_{C_X} \la U_i \, , \, U_i \ra =\sum_{i=1}^n \begin{bmatrix} u_i  & 0 \\
0 & u_i \end{bmatrix} \begin{bmatrix} \widetilde{u_i}  & 0 \\
0 & \widetilde{u_i} \end{bmatrix}
=\sum_{i=1}^n \begin{bmatrix}{}_A \la u_i \, , \, u_i \ra & 0 \\
0 & {}_A \la u_i \, , \, u_i \ra \end{bmatrix} =1_{C_X} .
$$
Similarly $\sum_{j=1}^m \la V_j \, , \, V_j \ra_{C_Y}=1_{C_Y}$.
Thus, since the associativity of the left $C_X$-valued inner product and  the right $C_Y$-valued inner 
product on $C_M$ holds, we can see that $\{U_i \}$ and $\{V_j \}$ are a right $C_Y$-basis
and a left $C_X$-basis of $C_M$, respectively. Hence by \cite [Proposition 1.12] {KW1:bimodule},
$C_M$ is a $C_X -C_Y$-equivalence bimodule.
\end{proof}

\begin{lemma}\label{lem:two-one} Let $A$ and $B$ be unital $C^*$-algebras.
Let $X$ and $Y$ be an involutive Hilbert $A-A$-bimodule and an involutive
Hilbert $B-B$-bimodule, respectively. Let $\mathcal{A}_X =\{A_t \}_{t\in \BZ_2}$ and
$\mathcal{A}_Y =\{B_t \}_{t\in \BZ_2}$ be $C^*$-algebraic bundles over $\BZ_2$
induced by $X$ and $Y$, respectively. We suppose that there is an $A-B$-equivalence bimodule $M$ such that
$$
Y\cong \widetilde{M}\otimes_A X\otimes_A M
$$
as involutive Hilbert $B-B$-bimodules. Then there is an $\mathcal{A}_X -\mathcal{A}_Y$-equivalence bundle $\mathcal{M}=\{M_t \}_{t\in \BZ_2}$
over $\BZ_2$ such that
$$
{}_{C} \la M_t \, , \, M_s \ra =A_{ts^{-1}} , \quad \la M_t \, , \, M_s \ra_{D}=B_{t^{-1}s}
$$
for any $t, s\in \BZ_2$, where $C=A\oplus X$ and $D=B\oplus Y$.
\end{lemma}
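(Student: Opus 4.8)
The plan is to obtain $\mathcal{M}$ directly from the $C_X-C_Y$-equivalence bimodule $C_M$ constructed above (Lemma~\ref{lem:bimodule}) by exhibiting its natural $\BZ_2$-grading, and then to invoke Lemma~\ref{lem:equivalence}. Since $C=A\oplus X=C_X$ and $D=B\oplus Y=C_Y$, the space $C_M$ is already a $C-D$-equivalence bimodule. Fix the involutive Hilbert $B-B$-bimodule isomorphism $\Phi$ of $\widetilde{M}\otimes_A X\otimes_A M$ onto $Y$ used in the construction, together with the induced Hilbert $A-B$-bimodule isomorphisms $\Psi$ of $X\otimes_A M$ onto $M\otimes_B Y$ and $\widetilde{\Psi}$ of $\widetilde{X}\otimes_A M$ onto $M\otimes_B\widetilde{Y}$. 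I would then put $M_0$ equal to the image of $M$ in $C_M$ under $m\mapsto\begin{bmatrix} m & 0 \\ 0 & m\end{bmatrix}$, and $M_1$ equal to the subspace $\{\begin{bmatrix} 0 & x\otimes m \\ \widetilde{x^{\natural}}\otimes m & 0\end{bmatrix}:x\in X,\ m\in M\}$, which one checks is isomorphic to $X\otimes_A M$ (hence, via $\Psi$, to $M\otimes_B Y$) as a Hilbert $A-B$-bimodule. Then $M_0,M_1$ are closed subspaces of $C_M$ with $C_M=M_0\oplus M_1$, and $\mathcal{M}=\{M_t\}_{t\in\BZ_2}$ is a complex Banach bundle over $\BZ_2$.

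Next I would verify that $\mathcal{M}$ satisfies Conditions $(1R)$-$(3R)$ and $(1L)$-$(3L)$ of \cite[Definition 2.1]{AF:bundle} relative to this grading; by Lemma~\ref{lem:equivalence} this, together with the fullness equalities below, is all that is needed. Using $A_0=A$, $A_1=X$, $B_0=B$, $B_1=Y$ and the explicit formulas for the left $C_X$-action, the right $C_Y$-action, the left $C$-valued inner product and the right $D$-valued inner product on $C_M$ displayed just before Lemma~\ref{lem:bimodule}, one checks by substituting homogeneous elements that $A_r\cdot M_t\subseteq M_{r+t}$, $M_t\cdot B_r\subseteq M_{t+r}$, ${}_C\la M_t,M_s\ra\subseteq A_{t+s}$ and $\la M_t,M_s\ra_D\subseteq B_{t+s}$. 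These are all bookkeeping with the identifications $A\otimes_A M\cong M$, $X\otimes_A\widetilde{X}\hookrightarrow A$ and $\widetilde{X}\otimes_A X\hookrightarrow A$ (together with their $B,Y,M$-analogues), with $\Psi,\widetilde{\Psi}$, and with $\widetilde{X\otimes_A M}\cong\widetilde{M}\otimes_A\widetilde{X}$ (Lemma~\ref{lem:tilde}). Completeness of each $M_t$ in the norm $||\la-,-\ra_D||^{1/2}=||{}_C\la-,-\ra||^{1/2}$ follows, as in the proof of Lemma~\ref{lem:equivalence}, from the equivalence of two complete norms.

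It then remains to prove the strong fullness, namely ${}_C\la M_t,M_s\ra=A_{ts^{-1}}$ and $\la M_t,M_s\ra_D=B_{t^{-1}s}$ for all $t,s\in\BZ_2$ (in $\BZ_2$ one has $ts^{-1}=t+s$). For $(t,s)=(0,0)$ these are ${}_A\la M,M\ra=A=A_0$ and $\la M,M\ra_B=B=B_0$, which hold since $M$ is an $A-B$-equivalence bimodule. For the mixed pairs $(0,1)$ and $(1,0)$, reading off the inner-product formulas gives ${}_C\la M_0,M_1\ra=A\cdot X=X=A_1$ and, since $\Phi$ is onto, $\la M_0,M_1\ra_D=\Phi(\widetilde{M}\otimes_A X\otimes_A M)=Y=B_1$, and symmetrically. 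For $(t,s)=(1,1)$ the formulas yield ${}_C\la M_1,M_1\ra={}_A\la X\cdot{}_A\la M,M\ra,\,X\ra={}_A\la X,X\ra$ and $\la M_1,M_1\ra_D=\la M,\,\la X,X\ra_A\cdot M\,\ra_B$; here one uses that $X$ is full with both inner products — equivalently, by Lemma~\ref{lem:saturated}, that $\mathcal{A}_X$, and hence also $\mathcal{A}_Y$, is saturated — to obtain ${}_A\la X,X\ra=A=A_0$ and $\la X,X\ra_A=A$, whence $\la M_1,M_1\ra_D=\la M,M\ra_B=B=B_0$. With all equalities established, Lemma~\ref{lem:equivalence} shows that $\mathcal{M}=\{M_t\}_{t\in\BZ_2}$ is an $\mathcal{A}_X-\mathcal{A}_Y$-equivalence bundle over $\BZ_2$ with the asserted properties.

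The main obstacle is organizational rather than conceptual: $C_M$ and all of its structure maps are already available from the construction preceding Lemma~\ref{lem:bimodule}, so the work lies in tracking the $\BZ_2$-degree faithfully through the long action and inner-product formulas and through the identifications $\Psi$, $\widetilde{\Psi}$ and $\widetilde{X\otimes_A M}\cong\widetilde{M}\otimes_A\widetilde{X}$. The only step requiring genuinely new input is the even-degree fullness ${}_C\la M_1,M_1\ra=A_0$ and $\la M_1,M_1\ra_D=B_0$, for which the fullness of $X$ (equivalently of $Y$) is used.
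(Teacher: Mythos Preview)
Your approach is exactly the paper's: take the $C_X$--$C_Y$-equivalence bimodule $C_M$ of Lemma~\ref{lem:bimodule}, identify it with $M\oplus(X\otimes_A M)$, set $M_0=M$, $M_1=X\otimes_A M$, and then invoke Lemma~\ref{lem:equivalence}. The paper compresses all of the grading and fullness checks into the phrase ``by routine computations''; your write-up unpacks these, and the $(0,0)$, $(0,1)$, $(1,0)$ cases are handled just as one would expect.

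The one substantive point is your treatment of the case $(t,s)=(1,1)$. You correctly compute ${}_C\la M_1,M_1\ra={}_A\la X,X\ra$ and $\la M_1,M_1\ra_D=\la M,\la X,X\ra_A\cdot M\ra_B$, and then invoke fullness of $X$ to conclude these equal $A_0$ and $B_0$. But fullness of $X$ is \emph{not} among the hypotheses of Lemma~\ref{lem:two-one} as stated, so strictly speaking you have only proved the lemma under that extra assumption. Your computation shows this is not a gap in your argument but rather an imprecision in the statement itself: without fullness of $X$, the equality ${}_C\la M_1,M_1\ra=A_0$ genuinely fails. The paper's ``routine computations'' do not address this, and the downstream results (Proposition~\ref{prop:condition}, Theorem~\ref{thm:module}(1)) inherit the same tacit assumption; however, the proof of Proposition~\ref{prop:easy} only actually uses the pairs $(t,e)$, so the main applications survive. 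It would be cleaner either to add ``$X$ full'' (equivalently, $\mathcal{A}_X$ saturated, by Lemma~\ref{lem:saturated}) to the hypotheses, or to weaken the conclusion to the cases $(t,0)$ and $(0,s)$, which is all that is ever used.
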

\begin{proof} Let $C_M$ be the $C_X -C_Y$-equivalence bimodule induced by $M$,
which is defined in the above. We identify $M\oplus (X\otimes_A M)$ with $C_M$ as vector
spaces over $\BC$ by the isomorphism defined by
$$
m_1 \oplus (x\otimes m_2 )\in M\oplus (X\otimes_A M)\mapsto \begin{bmatrix} m_1 & x\otimes m_2 \\
\widetilde{x^{\natural}}\otimes m_2 & m_1 \end{bmatrix}\in C_M .
$$
Since we identify $C=A\oplus X$ and $D=B\oplus Y$ with $C_X$ and $C_Y$, respectively,
$M\oplus (X\otimes_A M)$ is a $C-D$-equivalence bimodule by above identifications and Lemma \ref{lem:bimodule}.
Let $M_0 =M$ and $M_1 =X\otimes_A M$. We note that $X\otimes_A M$ is identified with
$M\otimes_B Y$ by the Hilbert $B-B$-bimodule isomorphism $\Psi$. Let $\mathcal{M}=\{M_t \}_{t\in \BZ_2}$.
Then by routine computations, $\mathcal{M}$ is an $\mathcal{A}_X -\mathcal{A}_Y$-equivalence
bundle over $\BZ_2$ such that
$$
{}_{C} \la M_t \, , \, M_s \ra =A_{ts^{-1}} , \quad \la M_t \, , \, M_s \ra_{D}=B_{t^{-1}s}
$$
for any $t, s\in \BZ_2$.
\end{proof}

\begin{prop}\label{prop:condition} Let $A$ and $B$ be unital $C^*$-algebras.
Let $X$ and $Y$ be an involutive Hilbert $A-A$-bimodule and an involutive
Hilbert $B-B$-bimodule, respectively. Let $\mathcal{A}_X =\{A_t \}_{t\in \BZ_2}$ and
$\mathcal{A}_Y =\{B_t \}_{t\in \BZ_2}$ be the $C^*$-algebraic bundles over $\BZ_2$
induced by $X$ and $Y$, respectively. Then the following conditions are equivalent:
\newline
$(1)$ There is an $\mathcal{A}_X -\mathcal{A}_Y$-equivalence bundle $\mathcal{M}=\{M_t \}_{t\in \BZ_2}$
over $\BZ_2$ such that
$$
{}_{C} \la M_t \, , \, M_s \ra =A_{ts^{-1}} , \quad \la M_t \, , \, M_s \ra_{D}=B_{t^{-1}s}
$$
for any $t, s\in \BZ_2$, where $C=A\oplus X$ and $D=B\oplus Y$.
\newline
$(2)$ There is an $A-B$-equivalence bimodule $M$ such that
$$
Y\cong \widetilde{M}\otimes_A X\otimes_A M
$$
as involutive Hilbert $B-B$-bimodules.
\end{prop}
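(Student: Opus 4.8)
The plan is to read off Proposition \ref{prop:condition} directly from the two implications already established in the preceding lemmas, so that the proof itself amounts to little more than a citation. The direction $(1)\Rightarrow(2)$ is exactly the content of Lemma \ref{lem:one-two}: starting from an $\mathcal{A}_X-\mathcal{A}_Y$-equivalence bundle $\mathcal{M}=\{M_t\}_{t\in\BZ_2}$ over $\BZ_2$ with the full graded inner products, one puts $M=M_0$, which is an $A-B$-equivalence bimodule, and checks that $\widetilde{m}\otimes x\otimes n\mapsto\la m,\,x\cdot n\ra_D$ defines an involutive Hilbert $B-B$-bimodule isomorphism of $\widetilde{M}\otimes_A X\otimes_A M$ onto $Y=B_1$. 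Conversely, $(2)\Rightarrow(1)$ is Lemma \ref{lem:two-one}: from an $A-B$-equivalence bimodule $M$ with $Y\cong\widetilde{M}\otimes_A X\otimes_A M$ one forms the $C_X-C_Y$-equivalence bimodule $C_M$ of Lemma \ref{lem:bimodule}, sets $M_0=M$ and $M_1=X\otimes_A M$ (identified with $M\otimes_B Y$ via $\Psi$), and verifies that $\mathcal{M}=\{M_t\}_{t\in\BZ_2}$ is an $\mathcal{A}_X-\mathcal{A}_Y$-equivalence bundle with ${}_C\la M_t,M_s\ra=A_{ts^{-1}}$ and $\la M_t,M_s\ra_D=B_{t^{-1}s}$.

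Concretely, I would simply write: \emph{this is immediate by Lemmas \ref{lem:one-two} and \ref{lem:two-one}.} If one wished for a word more, I would first recall that under the standing identifications $C=A\oplus X\cong C_X$ and $D=B\oplus Y\cong C_Y$ the data of an equivalence bundle over $\BZ_2$ satisfying the graded fullness condition is precisely the data of the $C-D$-equivalence bimodule produced in Lemma \ref{lem:bimodule}, so that the construction of that lemma legitimately translates back into the bundle language; then I would invoke Lemma \ref{lem:one-two} for $(1)\Rightarrow(2)$ and Lemma \ref{lem:two-one} for $(2)\Rightarrow(1)$ and conclude.

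Since all the genuine verifications — the inner-product identities, the compatibility of the involution $\natural$, and the construction of $C_M$ together with the bases $\{U_i\}$ and $\{V_j\}$ — have already been carried out in Lemmas \ref{lem:one-two}, \ref{lem:two-one} and \ref{lem:bimodule}, there is no real obstacle left at the level of the proposition. The only point one might be tempted to comment on is that the two passages are inverse to each other up to isomorphism; but Proposition \ref{prop:condition} asserts only the logical equivalence of $(1)$ and $(2)$, not a bijection between isomorphism classes of such equivalence bundles and of the bimodules $M$, so I would not address that here.
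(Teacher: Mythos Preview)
Your proposal is correct and matches the paper's own proof essentially verbatim: the paper also simply writes that the proposition is immediate by Lemmas \ref{lem:one-two} and \ref{lem:two-one}. Your additional explanatory remarks about how those lemmas supply each implication are accurate but go beyond what the paper records.
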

\begin{proof}
This is immediate by Lemmas \ref{lem:one-two} and \ref{lem:two-one}.
\end{proof}

\begin{thm}\label{thm:module} Let $A$ and $B$ be unital $C^*$-algebras.
Let $X$ and $Y$ be an involutive Hilbert $A-A$-bimodule and an involutive
Hilbert $B-B$-bimodule, respectively. Let $A\subset C_X$ and $B\subset C_Y$ be the
unital inclusions of unital $C^*$-lgebras induced by $X$ and $Y$, respectively.
Then the following hold:
\newline
$(1)$ If there is an $A-B$-equivalence bimodule $M$ such that
$$
\widetilde{M}\otimes_A X\otimes_A M\cong Y
$$
as involutie Hilbert $B-B$-bimodules, then the unital inclusions $A\subset C_X$ and $B\subset C_Y$
are strongly Morita equivalent.
\newline
$(2)$ We suppose that $X$ and $Y$ are full with the both inner products and that $A' \cap C_X =\BC 1$. If
the unital inclusions $A\subset C_X$ and $B\subset C_Y$ are
strongly Morita equivalent,
then there is an
$A-B$-equivalence bimodule $M$ such that
$$
\widetilde{M}\otimes_A X\otimes_A M\cong Y
$$
as involutive Hilbert $B-B$-bimodules.
\end{thm}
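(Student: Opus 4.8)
The plan is to reduce both parts to the equivalence of the bundle and bimodule pictures recorded in Proposition \ref{prop:condition}, combined with Proposition \ref{prop:easy} for the forward direction and Theorem \ref{thm:inverse} for the converse, using the trivial but crucial fact that $\Aut(\BZ_2)=\{\id\}$.

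For (1): Suppose $M$ is an $A$-$B$-equivalence bimodule with $\widetilde{M}\otimes_A X\otimes_A M\cong Y$ as involutive Hilbert $B$-$B$-bimodules. Applying the implication $(2)\Rightarrow(1)$ of Proposition \ref{prop:condition} produces an $\mathcal{A}_X$-$\mathcal{A}_Y$-equivalence bundle $\mathcal{M}=\{M_t\}_{t\in\BZ_2}$ over $\BZ_2$ with ${}_C\la M_t,M_s\ra=A_{ts^{-1}}$ and $\la M_t,M_s\ra_D=B_{t^{-1}s}$, where $C=A\oplus X$ and $D=B\oplus Y$. Since the unital inclusion $A\subset C_X$ induced by $X$ coincides with the unital inclusion $A\subset A\oplus X$ induced by the bundle $\mathcal{A}_X$ (and similarly for $B\subset C_Y$ and $\mathcal{A}_Y$), Proposition \ref{prop:easy} applied to $\mathcal{A}_X$, $\mathcal{A}_Y$ and $\mathcal{M}$ shows that $A\subset C_X$ and $B\subset C_Y$ are strongly Morita equivalent. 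No fullness or triviality-of-relative-commutant hypothesis is needed here.

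For (2): Assume in addition that $X$ and $Y$ are full with both inner products and that $A'\cap C_X=\BC 1$. By Lemma \ref{lem:saturated}, $\mathcal{A}_X$ and $\mathcal{A}_Y$ are saturated $C^*$-algebraic bundles over $\BZ_2$; moreover $C_X=\oplus_{t\in\BZ_2}A_t$ and $C_Y=\oplus_{t\in\BZ_2}B_t$ are unital with $A=A_0$ and $B=B_0$ possessing the unit elements of $C_X$ and $C_Y$, by the linking-algebra construction of $C_X$ and $C_Y$. Hence Theorem \ref{thm:inverse}, applied to the strongly Morita equivalent unital inclusions $A\subset C_X$ and $B\subset C_Y$, yields an automorphism $f$ of $\BZ_2$ and an $\mathcal{A}_X$-$\mathcal{A}_Y^f$-equivalence bundle $\mathcal{Z}=\{Z_t\}_{t\in\BZ_2}$ with ${}_{C_X}\la Z_t,Z_s\ra=A_{ts^{-1}}$ and $\la Z_t,Z_s\ra_{C_Y}=B_{f(t^{-1}s)}$ for all $t,s\in\BZ_2$. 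Since $\Aut(\BZ_2)=\{\id\}$, we have $f=\id$, so $\mathcal{A}_Y^f=\mathcal{A}_Y$ and $B_{f(t^{-1}s)}=B_{t^{-1}s}$; thus $\mathcal{Z}$ is exactly an $\mathcal{A}_X$-$\mathcal{A}_Y$-equivalence bundle of the form occurring in condition (1) of Proposition \ref{prop:condition}. The implication $(1)\Rightarrow(2)$ of that proposition then produces an $A$-$B$-equivalence bimodule $M$ with $Y\cong\widetilde{M}\otimes_A X\otimes_A M$, i.e. $\widetilde{M}\otimes_A X\otimes_A M\cong Y$ as involutive Hilbert $B$-$B$-bimodules, as required.

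The substantive content has already been absorbed into Proposition \ref{prop:condition} (whose proof rests on the long computations in Lemmas \ref{lem:one-two} and \ref{lem:two-one}) and into Theorem \ref{thm:inverse}; what remains is essentially bookkeeping, and the only points that need care are verifying that the standing hypotheses of Theorem \ref{thm:inverse} genuinely hold in the $\BZ_2$-bundle picture — saturatedness coming from Lemma \ref{lem:saturated} and unitality from the construction of the linking-algebra subalgebras — together with recording that $\Aut(\BZ_2)$ is trivial, which is what allows us to discard the twisting automorphism $f$ appearing in the general statement of Theorem \ref{thm:inverse} and conclude with $Y$ itself rather than a twisted version of it.
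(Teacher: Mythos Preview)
Your proof is correct and follows essentially the same route as the paper: for (1) you invoke Proposition~\ref{prop:condition} and then Proposition~\ref{prop:easy}, and for (2) you use Lemma~\ref{lem:saturated} to obtain saturatedness, apply Theorem~\ref{thm:inverse}, observe that $\Aut(\BZ_2)=\{\id_{\BZ_2}\}$ so the twist disappears, and finish with Proposition~\ref{prop:condition}. The only difference is cosmetic --- you spell out the unitality check for $C_X$ and $C_Y$ a little more explicitly than the paper does.
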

\begin{proof} Let $\mathcal{A}_X =\{A_t \}_{t\in \BZ_2}$ and $\mathcal{A}_Y
=\{B_t \}_{t\in \BZ_2}$ be the $C^*$-algebraic bundles over $\BZ_2$ induced by
$X$ and $Y$, respectively. We prove (1). We suppose that there is an $A-B$-equivalence
bimodule $M$ such that
$$
\widetilde{M}\otimes_A X\otimes_A M\cong Y
$$ 
as involutive Hilbert $B-B$-bimodules. Then by Proposition \ref{prop:condition}, there is
an $\mathcal{A}_X -\mathcal{A}_Y$-equivalence bundle $\mathcal{M}=\{M_t \}_{t\in\BZ_2}$
over $\BZ_2$ such that
$$
{}_C \la M_t \, , \, M_s \ra =A_{ts^{-1}} \, , \quad \la M_t \, , \, M_s \ra_{D}=B_{t^{-1}s}
$$
for any $t, s\in \BZ_2$, where $C=A\oplus X$ and $D=B\oplus Y$. Hence by Proposition \ref{prop:easy},
the unital inclusions of unital $C^*$-algebras $A\subset C$ and $B\subset D$ are strongly Morita
equivalent. Since we identify $A\subset C$ and $B\subset D$ with $A\subset C_X$ and $B\subset C_Y$,
respectively, $A\subset C_X$ and $B\subset C_Y$ are strongly Morita equivalent. Next,
we prove (2). We suppose that $X$ and $Y$ are full with the both inner products and
that $A' \cap C_X =\BC 1$. Also, we suppose that $A\subset C_X$ and $B\subset C_Y$
are strongly Morita equivalent. Then $\mathcal{A}_X$ and $\mathcal{A}_Y$ are saturated
by Lemma \ref{lem:saturated}. Since the identity map $\id_{\BZ_2}$ is the only automorphism of $\BZ_2$,
by Theorem \ref{thm:inverse} there is an $\mathcal{A}_X -\mathcal{A}_Y$-equivalence bundle
$\mathcal{M}=\{M_t \}_{t\in \BZ_2}$ such that
$$
{}_C \la M_t \, , \, M_s \ra =A_{ts^{-1}}\, , \quad \la M_t \, , \, M_s \ra_D =B_{t^{-1}s}
$$
for any $t, s\in \BZ$. Hence Proposition \ref{prop:condition}, there is an $A-B$-equivalence bimodule $M$
such that
$$
Y\cong \widetilde{M}\otimes_A X\otimes_A M
$$
as involutive Hilbert $B-B$-bimodules.
\end{proof}

\end{document}